\DeclareFontFamily{U}{min}{}
\DeclareFontShape{U}{min}{m}{n}{<-> udmj30}{}
\tikzset{mono/.style={>-stealth}} 
\tikzset{epi/.style={-twotriang}} 
\tikzset{arrow/.style={->}}
\tikzset{arrowshorter/.style={->, shorten <=2pt, shorten >=2pt}}
\tikzset{twoarrowlonger/.style={double,double distance=1.5pt,
shorten <=5pt,shorten >=6pt,
decoration={markings,mark=at position -4pt with {\arrow[scale=1.75]{>}}},
preaction={decorate}}} 
\tikzset{mapstikz/.style={-stealth, 
decoration={markings,mark=at position 0pt with {\arrow[scale=0.5]{|}}}, preaction={decorate}}}
\tikzset{dot/.style={circle,draw,fill,inner sep=1pt}}
\tikzset{ 
    vnode/.style={circle, radius=2pt, minimum size=4pt, draw, fill, inner sep=0, label={[below,text height=5mm]:#1}}}
\tikzset{boxy/.style={baseline={([yshift=0.5ex]current bounding box.center)}}}
\tikzset{vellipsegreenone/.style={draw, ellipse, minimum width=0.6*#1, minimum height=1.5*#1, green, thick}}
\tikzset{vellipsepurpleone/.style={draw, ellipse, minimum width=0.6*#1, minimum height=1.5*#1, purple, thick}}
\theoremstyle{plain}
\newtheorem{prop}[equation]{Proposition}
\newtheorem{lemma}[equation]{Lemma}
\theoremstyle{definition}
\newtheorem{definition}[equation]{Definition}
\newtheorem{example}[equation]{Example}
\newtheorem{remark}[equation]{Remark}
\numberwithin{equation}{section}
\def\edge#1#2{\;{}_{#1\!\!}\bullet\!\!\rule[0.53ex]{1.5ex}{0.15ex}\!\!\bullet_{#2}}
\newcommand{\aug}{\operatorname{aug}}
\newcommand{\Cat}{\mathcal Cat}
\newcommand{\Deltaop}{\Delta^{\op}}
\newcommand{\Hom}{\operatorname{Hom}}
\newcommand{\Hor}{\mathcal Hor}
\newcommand{\hor}{\operatorname{hor}}
\newcommand{\id}{\operatorname{id}}
\newcommand{\ob}{\operatorname{ob}}
\newcommand{\op}{\operatorname{op}}
\newcommand{\Seg}{\mathcal Seg}
\newcommand{\Set}{\mathcal{S}\!\operatorname{et}}
\newcommand{\SSet}{s\mathcal{S}\!\operatorname{et}}
\newcommand{\sq}{\operatorname{sq}}
\newcommand{\st}{\operatorname{st}}
\newcommand{\Ver}{\mathcal Ver}
\newcommand{\ver}{\operatorname{ver}}
\newcommand{\union}{\sqcup}
\newcommand{\bigunion}{\bigsqcup}
\definecolor{frenchblue}{rgb}{0.0, 0.45, 0.73}
\begin{document}

\title{2-Segal sets from cuts of rooted trees}

\author[J.E.\ Bergner]{Julia E.\ Bergner}
\author[O.\ Borghi]{Olivia Borghi}
\author[P.\ Dey]{Pinka Dey}
\author[I.\ G\'alvez]{Imma G\'alvez-Carrillo}
\author[T.\ Hoekstra]{Teresa Hoekstra-Mendoza}

\address{Department of Mathematics, University of Virginia, Charlottesville, VA 22904}
\email{jeb2md@virginia.edu}

\address{School Of Mathematics and Statistics, The University of Melbourne, Melbourne, Victoria, Australia}
\email{oborghi@student.unimelb.edu.au}

\address{Department of Mathematics, National Institute of Calicut, Kozhikode, Kerala, India - 673601}
\email{pinkadey11@gmail.com}

\address{Departamento de Álgebra, Geometría y Topología, Universidad de M\'alaga; IMTECH, Universitat Polit\`ecnica de Catalunya;
Centre de Recerca Matem\`atica.}
\email{imma.galvez@uma.es}

\address{Centro de Investigacion en Matem\'aticas A. C. Jalisco S/N, Col. Valenciana, 36023, Guanajuato, Gto, México}
\email{maria.idskjen@cimat.mx}

\date{\today}

\renewcommand{\subjclassname}{\textup{2020} Mathematics Subject Classification}

\subjclass[2020]{Primary: 55U10, 18G30; Secondary: 18D05.}
\keywords{2-Segal set, Hall algebra, double categories, operads}

\begin{abstract}
	The theory of 2-Segal sets has connections to various important constructions such as the Waldhausen $S_\bullet$-construction in algebraic $K$-theory, Hall algebras, and (co)operads. In this paper, we construct 2-Segal sets from rooted trees and explore how these applications are illustrated by this example.  
\end{abstract}

\maketitle

\section{Introduction}

The notion of decomposition space was introduced by G\'alvez-Carrillo, Kock, and Tonks in \cite{dec}, and independently by Dyckerhoff and Kapranov in \cite{dk} under the name of 2-Segal space.  A 2-Segal space is a simplicial object that satisfies a condition weaker than the Segal condition.  Recall that, given a simplicial set $K$, the Segal condition requires all maps
\[ \varphi_n \colon  K_n\to \underbrace{K_1\times_{K_0}  \dots \times_{K_0} K_1}_n \]
to be isomorphisms for $n\geq 2$.  A $1$-\emph{Segal set}, or simply a \emph{Segal set}, is a simplicial set that satisfies the Segal condition.  Viewing $0$-simplices of a Segal set as objects and $1$-simplices as morphisms,  the fact that $\varphi_2$ is an isomorphism allows us to define a unique composition law
\[ K_1 \times_{K_0} K_1 \xleftarrow{\varphi_2} K_2 \overset{d_1}{\rightarrow} K_1. \]
A 2-Segal set is also a simplicial set, but it encodes a weaker algebraic structure. In particular, composites may or may not exist, and even when they do exist, they need not be unique. However, composition is associative in an appropriate sense. 

Analogously to Segal spaces, 2-Segal spaces are homotopy-theoretic analogues of 2-Segal sets, wherein simplicial sets are replaced by simplicial spaces, fibre products with their homotopy counterparts, and isomorphisms by weak equivalences.
    
The output of the Waldhausen's $S_{\bullet}$-construction constitutes an important family of examples of 2-Segal spaces.  In \cite{2s}, it was proven that the category of augmented stable double categories is equivalent to the category of 2-Segal sets via a generalized version of the discrete $S_{\bullet}$-construction, and a homotopical version for 2-Segal
spaces was proved in \cite{2ss}.  Another key feature of 2-Segal sets is that, under some finiteness assumptions, they give rise to Hall algebras or more general Hall categories.  When this construction is generalized to more general 2-Segal objects, it can be used to recover well-known Hall algebras in representation theory \cite{dk}. 

Dyckerhoff and Kapranov show that the category of 2-Segal sets is also equivalent to the category of invertible (co)operads in $\Set$ by \cite[Theorem 3.6.7]{dk}. This comparison uses a $X_1$-coloured cooperad $\mathcal{Q}_X$ corresponding to any simplicial set $X$;   this theory was further developed for 2-Segal spaces by Walde \cite{Wal21}. 

In this paper, we give a construction of a 2-Segal set $X^T$ arising from a rooted tree $T$, for which the face maps are described in terms of admissible cuts as defined in \cite{dec}, where a homotopical version is given for a category of all rooted trees.  This construction can furthermore be viewed as a specialization of the construction of a 2-Segal set $X^G$ associated to a graph $G$ as described in \cite{2s}.  Both of these families of examples give 2-Segal sets that are almost never 1-Segal.

By the results of \cite{2s}, we can associate to $X^T$ a pointed stable double category, and for certain families of trees we give an explicit description of this construction.  We also investigate the Hall algebra associated to $X^T$, and determine a condition under which it is commutative. 

Given a tree $T$, we can also forget its tree structure and think of its underlying graph $G=U(T)$, and thus compare the 2-Segal sets $X^T$ and $X^G$.   Although the naturally arising map $U \colon X^T \to X^G$ is a simplicial map, it is neither CULF nor relatively Segal; see Definitions \ref{culfdef} and \ref{relSegal}.  As a consequence, the inclusion $U \colon X^T \to X^G$ does not necessarily induce a homomorphism between the associated Hall algebras $\mathcal H^T$ and $\mathcal H^G$. Intuitively, these negative results are a consequence of the fact that decompositions of underlying graphs cannot always be lifted to decompositions of trees.

We conclude with the study of the (co)operads $\mathcal{Q}_{X^T}$ and $\mathcal{Q}_{X^G}$ associated with the 2-Segal sets $X^T$ and $X^G$.

\subsection*{Organization}
In Section \ref{background} we discuss the basic notions of 2-Segal sets and in Section \ref{rootedtrees} we define the construction of 2-Segal sets arising from rooted trees. 
In Section \ref{double} we describe the stable double category associated with the 2-Segal sets.  The relation between the 2-Segal set arising from a rooted tree and that from the underlying graph is studied in Section \ref{relation}.  The associated Hall algebra structure is analysed in Section \ref{hall}.  Finally, in Section \ref{operad}, we study the (co)operad associated with the 2-Segal sets.

\subsection*{Acknowledgements}
This paper is part of the authors’ Women in Topology IV project. We would like to thank the organizers of the Women in Topology IV workshop, as well as the Hausdorff Research Institute for Mathematics, where the workshop was held. The first-named author was partially supported by NSF grant DMS-1906281. The third-named author was supported by the NBHM grant no.\ 16(21)/2020/11. The fourth-named author was partially supported by Spanish Ministry of Science and Catalan Government Grants PID2019-103849GB-I00, PID2020-117971GB-C22, PID2023-149804NB-I00, CEX2020-001084-M and 2021SGR-00603, and by the Danish National Research Foundation through the Copenhagen Centre for Geometry and Topology (DNRF151).

\section{Background on 2-Segal sets} \label{background}

Recall that a \emph{simplicial set} is a functor $K \colon \Deltaop \rightarrow \Set$, where $\Set$ is the category of sets and $\Delta$ is the category of finite ordered sets $[n]=\{0 \leq 1 \leq \cdots \leq n\}$, for $n \geq 0$, and order-preserving functions between them.  We denote the set $K([n])$ by $K_n$.  A morphism between simplicial sets is a natural transformation of functors, and we denote the category of simplicial sets by $\SSet$.  In other words, a simplicial set $K$ is a collection of sets $K_n$ for each $n \geq 0$, together with face maps $d_i \colon K_n\rightarrow K_{n-1}$ for $0 \leq i \leq n$ and degeneracy maps $s_i \colon K_i \rightarrow K_{i+1}$ for $0 \leq i \leq n$ that satisfy the relations $d_id_j=d_{j-1}d_i$ and $d_is_j=s_{j-1}d_i$ for $i<j,$ $d_js_j=d_{j+1}s_j=id$, $d_is_j=s_jd_{i-1}$ for $i>j+1 $ and $s_is_j =s_{j+1}s_i$ for $ i \leq j$.  

A key example is the $n$-\emph{simplex} for any $n \geq 0$, which is the representable simplicial set $\Delta[n]$ defined by 
\[ \Delta[n]_k = \Hom_\Delta([k], [n]). \]
By the Yoneda Lemma, we have that
\[ \Hom_{\SSet}(\Delta[n], K) = K_n \]
for any $n \geq 0$.

An important sub-simplicial set of $\Delta[n]$ is the \emph{spine} $G[n]$, which is a union of 1-simplices, given by the colimit of the diagram
\[ \Delta[1] \overset{d^0}{\leftarrow} \Delta[0] \overset{d^1}{\rightarrow} \Delta[1] \leftarrow \cdots \leftarrow \Delta[0] \overset{d^1}{\rightarrow} \Delta[1], \]
with $n$ copies of $\Delta[1]$.  The inclusion $G[n] \hookrightarrow \Delta[n]$ induces, for any simplicial set $K$, a map of sets
\[ \Hom_{\SSet}(\Delta[n], K) \rightarrow \Hom_{\SSet}(G[n], K). \]
Using the definition of $G[n]$, one can check that
\[ \Hom(G[n], K) \cong \underbrace{K_1 \times_{K_0} \cdots \times_{K_0} K_1}_n. \]
We thus obtain, for each $n$, a \emph{Segal map} 
\[ K_n \rightarrow \underbrace{K_1 \times_{K_0} \cdots \times_{K_0} K_1}_n. \]

\begin{definition} 
A \emph{1-Segal set}, or simply \emph{Segal set}, is a simplicial set $K$ for which the Segal maps are isomorphisms for $n \geq 2$.
\end{definition}

We often refer to the condition that the Segal maps be isomorphisms as the \emph{Segal condition}.  Note that the Segal condition for $n=0,1$ is automatically satisfied, since in those cases $G[n]=\Delta[n]$.

If we think of the 0-simplices of a Segal set $K$ as ``objects" and its 1-simplices as ``morphisms", then the Segal condition tells us that $K$ behaves something like a category, in that any finite string of composable arrows has a unique composite.  More specifically, for any Segal map $K$, consider the diagram
\begin{equation} \label{composition}
K_1 \times_{K_0} K_1 \overset{(d_0,d_2)}{\longleftarrow} K_2 \overset{d_1}{\rightarrow} K_1. 
\end{equation}
If $K$ is a Segal set, then the map $(d_0,d_2)$ is an isomorphism, so we can define composition by $d_1 \circ (d_0, d_2)^{-1}$.

To formalize this idea further, let us recall the definition of the nerve of a category.

\begin{definition}
    Let $\mathcal C$ be a small category.  Its \emph{nerve} $N \mathcal C$ is the simplicial set given by
    \[ (N \mathcal C)_n = \Hom_{\Cat}([n], \mathcal C). \]
\end{definition}

Using the composition law defined above, and verifying the required identity conditions, we obtain the following result.

\begin{prop}
    A simplicial set is a Segal set if and only if it is isomorphic to the nerve of a category.
\end{prop}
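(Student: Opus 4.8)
The plan is to prove both directions by constructing explicit functors and checking they are mutually inverse.

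\medskip

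\noindent\textbf{The "if" direction.} First I would show that the nerve $N\mathcal{C}$ of any small category $\mathcal{C}$ is a Segal set. The key observation is that an $n$-simplex of $N\mathcal{C}$, namely a functor $[n] \to \mathcal{C}$, is exactly a string of $n$ composable morphisms $c_0 \to c_1 \to \cdots \to c_n$ in $\mathcal{C}$. The Segal map sends such a string to the tuple of its constituent arrows $(c_0 \to c_1, \ldots, c_{n-1} \to c_n)$, viewed as an element of $K_1 \times_{K_0} \cdots \times_{K_0} K_1$; the fibre products over $K_0 = \ob\mathcal{C}$ precisely encode that consecutive arrows must be composable (the target of one equals the source of the next). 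This assignment is manifestly a bijection for $n \geq 2$, since a composable string of arrows is the same data as the arrows themselves, so the Segal condition holds. (One should also note $N\mathcal{C}$ is genuinely a simplicial set, with face and degeneracy maps given by composing/dropping arrows and inserting identities, but this is standard.)

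\medskip

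\noindent\textbf{The "only if" direction.} Conversely, given a Segal set $K$, I would build a category $\mathcal{C}_K$. Set $\ob\mathcal{C}_K = K_0$ and $\Hom_{\mathcal{C}_K}(x,y) = \{f \in K_1 : d_1 f = x,\ d_0 f = y\}$ (using the standard convention that $d_1$ picks out the source and $d_0$ the target). The identity at $x$ is $s_0 x \in K_1$; the simplicial identities $d_0 s_0 = d_1 s_0 = \id$ guarantee this is an endomorphism of $x$. Composition is defined via the diagram \eqref{composition}: given composable $f, g \in K_1$, the Segal map $(d_0, d_2)\colon K_2 \to K_1 \times_{K_0} K_1$ is an isomorphism, so there is a unique $\sigma \in K_2$ with $d_2\sigma = f$ and $d_0\sigma = g$, and we set $g \circ f := d_1\sigma$. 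The bulk of the work is verifying the category axioms:

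\begin{enumroman}
\item \emph{Unitality}: that $s_0$ of an object behaves as a two-sided identity follows by exhibiting the relevant witnessing $2$-simplices as degeneracies $s_0 f$ and $s_1 f$ and applying the simplicial identities.
\item \emph{Associativity}: given three composable arrows $f, g, h$, one uses the Segal isomorphism $K_3 \xrightarrow{\sim} K_1\times_{K_0}K_1\times_{K_0}K_1$ to produce a unique $\tau \in K_3$ restricting to $f, g, h$ on the spine, and then the simplicial identities relating the faces $d_i\tau \in K_2$ show that both bracketings $(h\circ g)\circ f$ and $h\circ(g\circ f)$ equal $d_1 d_2 \tau$.
\end{enumroman}

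\medskip

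\noindent\textbf{Putting it together.} Finally I would check that the two constructions $\mathcal{C} \mapsto N\mathcal{C}$ and $K \mapsto \mathcal{C}_K$ are mutually inverse: starting from a category, $\mathcal{C}_{N\mathcal{C}} \cong \mathcal{C}$ because objects, morphisms, identities, and composition all match by unwinding the definitions; starting from a Segal set $K$, we get $N\mathcal{C}_K \cong K$ on $0$- and $1$-simplices by construction, and in degrees $n \geq 2$ the isomorphism $(N\mathcal{C}_K)_n \cong K_n$ is forced by the Segal condition on both sides (both are identified with $n$-fold fibre products of $K_1$ over $K_0$), with compatibility with face and degeneracy maps following from the simplicial identities. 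The main obstacle is the associativity verification: it requires care in bookkeeping the $3$-simplex and its various $2$-dimensional faces, and is the one place where the full force of the Segal condition in degree $3$ (not just degree $2$) is used. A remark worth making is that if one only assumed $\varphi_2$ to be an isomorphism, composition would be well-defined but need not be associative — hence genuinely needing all Segal maps, or at least $\varphi_2$ and $\varphi_3$, to be isomorphisms.
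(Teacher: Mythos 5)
Your proposal is correct and follows exactly the route the paper indicates: the paper defines composition via the inverse of $(d_0,d_2)$ in diagram \eqref{composition} and then asserts the result after "verifying the required identity conditions," which is precisely the unitality and associativity bookkeeping you carry out. Your identification of both bracketings with the long edge $d_1d_2\tau = d_1d_1\tau$ of the $3$-simplex is the standard and correct way to complete that verification.
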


Now we can generalize this idea to the notion of a 2-Segal set.  Given any triangulation $\mathcal T$ of a regular $(n+1)$-gon by its vertices, which are assumed to be labeled cyclically by $\{0,1, \ldots, n\}$, we can define a simplicial set $\Delta[\mathcal T]$ with 0-, 1-, and 2-simplices as indicated by the triangulation.  

\begin{example} \label{squares}
    There are two triangulations of the square:
    \[ \xymatrix{\mathcal T_1: & 3 & 2 \ar[l]  & \mathcal T_2:& 3 & 2 \ar[l] \\
    & 0 \ar[u] \ar[r] \ar[ur] & 1 \ar[u] && 0 \ar[u] \ar[r] & 1. \ar[u] \ar[ul]} \]
    Each of the associated simplicial sets includes naturally into the 3-simplex $\Delta[3]$.
\end{example}

More generally, for a triangulation $\mathcal T$ of an $(n+1)$-gon, there is a natural inclusion $\Delta[\mathcal T] \rightarrow \Delta[n]$ that in turn induces a map
\[ \Hom_{\SSet}(\Delta[n], K) \rightarrow \Hom_{\SSet}(\Delta[\mathcal T], K). \]
Similarly to the case of 1-Segal maps above, we can write these \emph{2-Segal maps} as
\[ K_n \rightarrow \underbrace{K_2 \times_{K_1} \cdots \times_{K_1} K_2}_{n-1}. \]

\begin{definition}
    A \emph{2-Segal set} is a simplicial set $K$ such that the 2-Segal maps are isomorphisms for every $n \geq 3$ and every triangulation $\mathcal T$ of a regular $(n+1)$-gon by its vertices.
\end{definition}

Now, a 2-Segal map behaves something like a category, since it still has ``objects" and ``morphisms", but composition need not exist, and when it does it need not be unique.  Looking back at \eqref{composition}, there is now no reason for the map $(d_0, d_2)$ to be an isomorphism, so all we can do is take the preimage of $X_1 \times_{X_0} X_1$ under this map and apply $d_1$.  If the preimage over some point is empty, then there is no composite at all; if it has more than one element, and those elements are not identified by $d_1$, then we obtain multiple composites.

However, this composition is associative when the notion makes sense.  Returning to Example \ref{squares}, we can think of the arrow $0 \rightarrow 2$ in the left-hand square as a composite of the arrows $0 \rightarrow 1$ and $1 \rightarrow 2$, and then $0 \rightarrow 3$ is obtained by taking the composite of $0 \rightarrow 2$ and $2 \rightarrow 3$.  We can similarly think of the right-hand square as encoding the triple composite in a different order.  The 2-Segal condition essentially says that either of these two configurations in a simplicial set $K$ uniquely determines a 3-simplex, namely, that the triple composite $0 \rightarrow 3$ must be the same for both.

\section{Rooted trees and their associated 2-Segal sets} \label{rootedtrees}

In this section, we recall some terminology for trees and then describe the construction of a 2-Segal set arising from a rooted tree.
	
Recall that a \emph{graph} $G$ consists of a pair $(V(G), E(G))$, where $V(G)$ is a set of vertices and $E(G)$ is a set of edges between vertices.  We assume all our graphs are finite, in the sense that both of these sets are finite. A \emph{tree} is a graph with no cycles.  In particular, a tree has at most one edge between any two vertices, and it has no loops, or edges that start and end at the same vertex.   
 
A \emph{rooted tree} $T$ has a distinguished vertex $v_0$, called the \emph{root}. We can regard the set $V(T)=\{v_0, v_1, \ldots, v_n\}$ of vertices of $T$ as a partially ordered set with $v_0 \leq v_j$ for every $v_j \in V(T)$; each non-root vertex $v_j$ has exactly one immediate predecessor $v_i$, and the set of the pairs $(v_i, v_j)$ corresponds exactly to the set of edges $E(T)$.  Thus for each vertex $v_j$, the vertices $v_i$ with $v_i\leq v_j$ form a totally ordered set $v_0<\dots<v_j$. 

We visually depict our trees with the root at the bottom and with the edges moving upward as indicated by these total orders; we can hence speak of edges ``above" or ``below" a given vertex.  We also typically depict the root by a filled black vertex.  See Figure \ref{figtr1} for an example. 
\addtocounter{equation}{1}
	{\scriptsize
		\begin{figure}
			\begin{tikzpicture}[scale=.70,line cap=round,line join=round,>=triangle 45,x=1cm,y=1cm, dot/.style = {circle, gray, minimum size=#1,
					inner sep=0pt, outer sep=0pt}, 
				dot/.default = 3pt]
				\draw [] (0,-1)-- (0,0);
				\draw [] (0,0)-- (-1,1);
				\draw [] (0,0)-- (0,1);
				\draw [] (0,0)-- (1,1);
				\begin{scriptsize}
					\draw [fill=black] (0,-1) circle (3pt);
					\draw[color=black] (-.5,-1) node {$v_0$};
					\draw [fill=white] (0,0) circle (3pt);
					\draw[color=black] (-.5,0) node {$v_1$};
					\draw [fill=white] (-1,1) circle (3pt);
					\draw[color=black] (-1.2,1.43) node {$v_{2}$};
					\draw [fill=white] (0,1) circle (3pt);
					\draw[color=black] (0.16,1.43) node {$v_{3}$};
					\draw [fill=white] (1,1) circle (3pt);
					\draw[color=black] (1.2,1.43) node {$v_{4}$};
				\end{scriptsize}
			\end{tikzpicture}
			\caption{Example of a rooted tree.} 
			\label{figtr1}
		\end{figure} }
	
We sometimes impose the additional condition that our trees be planar.  A \emph{planar rooted tree} is a rooted tree together with, for each vertex $v_i$, a total order on the set of edges $(v_i,v_j)$ above $v_i $.  We typically depict the order from left to right. In Figure \ref{figtr1}, we order the edges as $(v_1,v_2)<(v_1,v_3)<(v_1,v_4)$. 

An isomorphism of graphs is a bijection between the vertex sets and a compatible bijection between the edge sets. An isomorphism of rooted trees is given by a bijection $f:V\to V'$ of the vertex sets such that $f(v_1)\leq f(v_2)$ if and only if $v_1\leq v_2$. An isomorphism of planar rooted trees is further required to preserve the ordering of the edges at each vertex.

We can also consider collections of rooted trees.  A \emph{rooted forest} $F$ is a disjoint union of rooted trees.  It is \emph{planar} if it consists of an ordered disjoint union of planar trees.  Again, we typically depict the order of the trees from left to right. A isomorphism of (planar) rooted forests is given by (ordered) disjoint union of isomorphisms of (planar) rooted trees.
	
Any rooted tree or forest is \emph{labelled} by a set $\Lambda$ if it is equipped with an injective function from the vertex set to $\Lambda$.  The only isomorphisms in this context are the identity functions.
	
Our aim is to associate a simplicial set to any rooted tree and show that it has the property of being a 2-Segal set.  To produce the simplicial structure, we use the notion of a cut.  For simplicity, we make our definitions for labelled planar trees, but the same construction can be applied to unlabelled planar trees or to trees that are neither labelled nor planar.

Consider $S\subseteq V(T)$, with the induced partial order. The subforest of $T$ spanned by  $S$ consists of the vertex set $S$ and includes an edge $(v_i,v_j)$ whenever $v_i, v_j \in S$ and $(v_i,v_j)$ is an edge of $T$.  We use the notation $S$ for both the subset of vertices and the subforest that it spans.

A subset $L$ of the vertex set of $T$ defines a \emph{lower subtree} if, whenever $v_j \in L$ is the upper vertex of an edge $(v_i,v_j)$ of the tree, then $v_i\in L$. Equivalently, whenever $L$ contains $v_j$ it also contains the chain $v_0<\dots<v_j$.  Lower subtrees of a tree are either empty or connected; in the latter case they contain the root.  If $L$ defines a lower subtree, then the complement $V(T)\setminus L$ is called an \emph{upper subforest} of $T$.

\begin{example}
The following labelled rooted  tree (considered as planar or not)
\[ T=\begin{tikzpicture}[baseline=2ex,level distance=5mm, sibling distance=7mm, grow'=up, every node/.style={circle,draw,inner sep=1.5pt}]
\node[fill=black,"$a$" left ]{} 
      child{node["$b$" left ]{} child{node["$c$" left ]{}}}
      child{node["$d$" right]{} child{node["$e$" right]{}}};
\end{tikzpicture} \]
has 10 possible lower subtrees: 
\[T_1\!=\!\!\begin{tikzpicture}[baseline=2ex,level distance=5mm, sibling distance=4mm, grow'=up, every node/.style={circle,draw,inner sep=1.5pt}]
\node[fill=black,"$a$" left ]{} 
      child{node["$b$" left ]{} }
      child{node["{$\!d,$}" right]{} child{node["$e$" right]{}}};
\end{tikzpicture}
\quad T_2\!=\begin{tikzpicture}[baseline=2ex,level distance=5mm, sibling distance=4mm, grow'=up, every node/.style={circle,draw,inner sep=1.5pt}]
\node[fill=black,"$a$" right ]{} 
      child{node["{$\!d,$}" right]{} child{node["$e$" right]{}}};
\end{tikzpicture}
\quad T_3\!=\!\!\begin{tikzpicture}[baseline=2ex,level distance=5mm, sibling distance=4mm, grow'=up, every node/.style={circle,draw,inner sep=1.5pt}]
\node[fill=black,"$a$" left ]{} 
      child{node["$b$" left ]{} child{node["$c$" left ]{}}}
      child{node["{$\!d,$}" right]{} };
\end{tikzpicture}
\quad T_4\!=\!\!\begin{tikzpicture}[baseline=2ex,level distance=5mm, sibling distance=4mm, grow'=up, every node/.style={circle,draw,inner sep=1.5pt}]
\node[fill=black,"$a$" left ]{} 
      child{node["$b$" left ]{} child{node["$c$" left ]{}}};
\end{tikzpicture}\,,
\quad T_5\!=\!\!\begin{tikzpicture}[baseline=2ex,level distance=5mm, sibling distance=4mm, grow'=up, every node/.style={circle,draw,inner sep=1.5pt}]
\node[fill=black,"$a$" left ]{} 
      child{node["$b$" left ]{} }
      child{node["{$\!d,$}" right]{} };
\end{tikzpicture}
\quad T_6\!=\!\!\begin{tikzpicture}[baseline=2ex,level distance=5mm, sibling distance=4mm, grow'=up, every node/.style={circle,draw,inner sep=1.5pt}]
\node[fill=black,"$a$" left ]{} 
      child{node["$d$" left ]{} };
\end{tikzpicture}\,,
\quad T_7\!=\!\!\begin{tikzpicture}[baseline=2ex,level distance=5mm, sibling distance=4mm, grow'=up, every node/.style={circle,draw,inner sep=1.5pt}]
\node[fill=black,"$a$" left ]{} 
      child{node["$b$" left ]{} };
\end{tikzpicture}\,, \] 
together with the trees $T_8=\bullet_a$, $T_9=T$ and $T_{10}=\varnothing$. 

If we consider $T$ as unlabelled but planar, then $T_2$ and $T_4$ are indistinguishable, in that they are isomorphic, as are $T_6$ are $T_7$.  Thus, only 8 trees occur as lower subtrees of $T$:
\[ \begin{tikzpicture}[baseline,level distance=5mm, sibling distance=7mm, grow'=up, every node/.style={circle,draw,inner sep=1.5pt}]
\node[fill=black]{} 
      child{node[]{} child{node[]{}}}
      child{node[]{} child{node[]{}}};
\end{tikzpicture}
\qquad
\begin{tikzpicture}[baseline,level distance=5mm, sibling distance=7mm, grow'=up, every node/.style={circle,draw,inner sep=1.5pt}]
\node[fill=black]{} 
      child{node[]{} }
      child{node[]{} child{node[]{}}};
\end{tikzpicture}
\qquad
\begin{tikzpicture}[baseline,level distance=5mm, sibling distance=7mm, grow'=up, every node/.style={circle,draw,inner sep=1.5pt}]
\node[fill=black]{} 
      child{node[]{} }
      child{node[]{} };
\end{tikzpicture}
\qquad
\begin{tikzpicture}[baseline,level distance=5mm, sibling distance=7mm, grow'=up, every node/.style={circle,draw,inner sep=1.5pt}]
\node[fill=black]{} 
      child{node[]{} child{node[]{}}}
      child{node[]{} };
\end{tikzpicture}
\qquad
\begin{tikzpicture}[baseline,level distance=5mm, sibling distance=7mm, grow'=up, every node/.style={circle,draw,inner sep=1.5pt}]
\node[fill=black]{} 
      child{node[]{} child{node[]{}}};
\end{tikzpicture}
\qquad
\begin{tikzpicture}[baseline,level distance=5mm, sibling distance=7mm, grow'=up, every node/.style={circle,draw,inner sep=1.5pt}]
\node[fill=black]{} 
      child{node[]{}};
\end{tikzpicture}
\qquad
\bullet
\qquad
\varnothing. \]
If we consider $T$ as neither labelled nor planar then $T_1$ and $T_3$ are also indistinguishable, and now only 7 trees occur as lower subtrees of $T$:
\[ \begin{tikzpicture}[baseline,level distance=5mm, sibling distance=7mm, grow'=up, every node/.style={circle,draw,inner sep=1.5pt}]
\node[fill=black]{} 
      child{node[]{} child{node[]{}}}
      child{node[]{} child{node[]{}}};
\end{tikzpicture}
\qquad
\begin{tikzpicture}[baseline,level distance=5mm, sibling distance=7mm, grow'=up, every node/.style={circle,draw,inner sep=1.5pt}]
\node[fill=black]{} 
      child{node[]{} }
      child{node[]{} child{node[]{}}};
\end{tikzpicture}
\qquad
\begin{tikzpicture}[baseline,level distance=5mm, sibling distance=7mm, grow'=up, every node/.style={circle,draw,inner sep=1.5pt}]
\node[fill=black]{} 
      child{node[]{} }
      child{node[]{} };
\end{tikzpicture}
\qquad
\begin{tikzpicture}[baseline,level distance=5mm, sibling distance=7mm, grow'=up, every node/.style={circle,draw,inner sep=1.5pt}]
\node[fill=black]{} 
      child{node[]{} child{node[]{}}};
\end{tikzpicture}
\qquad
\begin{tikzpicture}[baseline,level distance=5mm, sibling distance=7mm, grow'=up, every node/.style={circle,draw,inner sep=1.5pt}]
\node[fill=black]{} 
      child{node[]{}};
\end{tikzpicture}
\qquad
\bullet
\qquad
\varnothing. \]
\end{example}
	
\begin{definition} 
    Let $T$ be a rooted tree. An \emph{admissible cut}, or simply a \emph{cut}, on $T$ is a partition of the vertices $V(T)= L \union U$  such that $L$ defines a lower subtree of $T$.
\end{definition}

Observe that this process determines a partition of the edge set $E(T)$ into three parts: the edges of $L$, the edges of $U$, and the edges of the form $(v,w)$ with $v\in L$ and $w\in U$.  Indeed, any of these subsets uniquely determines the cut.

\begin{definition}
    A \emph{layering of $n-1$ cuts} of $T$ is a sequence of partially ordered subsets 
    \[V(T)=L_0\supseteq L_1 \supseteq \cdots\supseteq L_n=\varnothing,\] where each $L_k$ defines a lower subtree of $T$. 
\end{definition}

A layering defines a sequence of $n-1$ cuts ordered from leaves to the root.

\begin{example}
	For $n=0$ we have, by convention, a unique `layering of $-1$ cuts' $V(T) = L_0 = \varnothing$ of the empty tree $T=\varnothing$.
		
	For $n=1$ any tree $T$ has a unique `layering of 0 cuts' $V(T)=L_0 \supseteq L_1=\varnothing$.
 
	For $n=2$ there is a layering $V(T) \supseteq L\supseteq\varnothing$ associated to any cut of $T$, where $L$ defines the lower subtree of the cut.

    For $n=3$, consider the tree $T$ with two cuts as given on the left-hand side of Figure \ref{figtr2}. Then the associated chain is $V(T)= L_0\supseteq L_1\supseteq L_2\supseteq L_3=\varnothing$, where $L_1=\{e,d,f,g,h\}$ and $L_2=\{h\}$.
    \addtocounter{equation}{1}
	{\scriptsize
		\begin{figure}
			\begin{tikzpicture}[scale=.60,line cap=round,line join=round,>=triangle 45,x=1cm,y=1cm, dot/.style = {circle, gray, minimum size=#1,
					inner sep=0pt, outer sep=0pt}, 
				dot/.default = 3pt]
				\draw [-] (0,0)-- (1,1);
				\draw [] (0,0)-- (-1,1);
				\draw [] (0,-1)-- (0,0);
				\draw [] (-1,1)-- (1,3);
				\draw [] (0,2)-- (-1,3);
				\draw [] (-1,1)-- (-2,2);
				\draw [-, dashed] (-2.12,1.43)-- (1.20,2.8);
				\draw [-, dashed] (-1.05,-.5)-- (1.20,-.5);
				\draw [fill=black] (0,-1) circle (3pt);
				\draw[color=black] (-.4,-.8) node {$h$};
				\draw [fill=white] (-1,1) circle (3pt);
				\draw[color=black] (-1,1.43) node {$e$};
				\draw [fill=white] (1,1) circle (3pt);
				\draw[color=black] (1.16,1.43) node {$f$};
				\draw [fill=white] (0,0) circle (3pt);
				\draw[color=black] (0.3,-0.24) node {$g$};
				\draw [fill=white] (0,2) circle (3pt);
				\draw [fill=white] (-2,2) circle (3pt);
				\draw[color=black] (-1.90,2.43) node {$a$};
				\draw [fill=white] (-1,3) circle (3pt);
				\draw[color=black] (-0.84,3.43) node {$b$};
				\draw [fill=white] (1,3) circle (3pt);
				\draw[color=black] (1.16,3.43) node {$c$};
				\draw[color=black] (0.3,2.01) node {$d$};
			\end{tikzpicture}
			\hspace{2.5cm}
			\begin{tikzpicture}[scale=.60,line cap=round,line join=round,>=triangle 45,x=1cm,y=1cm, dot/.style = {circle, gray, minimum size=#1,
					inner sep=0pt, outer sep=0pt}, 
				dot/.default = 3pt]
				\draw [] (-2,-1)-- (-2,0);
				\draw [] (-2,0)-- (-1,1);
				\draw [] (-2,0)-- (-3,1);
				\draw [] (-3,1)-- (-2,2);
				\draw [-, dashed] (-3.05,-.6)-- (-1.20,-.6);
				\draw [fill=white] (-2,2) circle (3pt);
				\draw [fill=white] (-2,0) circle (3pt);
				\draw[color=black] (-1.70,-.20) node {$g$};
				\draw [fill=white] (-3,1) circle (3pt);
				\draw[color=black] (-1.7,2.1) node {$d$};
				\draw[color=black] (-2.98,1.43) node {$e$};
				\draw [fill=white] (-1,1) circle (3pt);
				\draw[color=black] (-0.84,1.43) node {$f$};
				\draw [fill=black] (-2,-1) circle (3pt);
				\draw[color=black] (-2.3,-1) node {$h$};
			\end{tikzpicture}
			\caption{An example of a tree $T$ with two cuts (left), and its image under the map $d_0$ (right).}
			\label{figtr2}
		\end{figure} } 
\end{example}

We now generalize some of the previous definitions to forests.

\begin{definition}
A \emph{rooted forest} $F$ is a disjoint union of rooted trees $F=\bigunion T_{\alpha}$.
\begin{itemize}
\item  A subset $L=\bigunion L_{\alpha}$ of the vertex set defines a \emph{lower subforest} of $F$ if each $L_{\alpha}$ defines a lower subtree of $T_\alpha$.

\item Its complement $V(F)\setminus L$ defines an upper subforest $U=\bigunion U_{\alpha}$ of $F$. 

\item An \emph{(admissible) cut} of $F$ is a partition of the vertices
$L\union U$ such that $L$ defines a lower subforest of $F$. 

\item A \emph{layering of $n-1$} cuts of a rooted forest $F$ is a sequence of nested subsets $V(L)=L_0\supseteq L_1 \supseteq \cdots \supseteq L_n=\varnothing$, where each $L_k$ defines a lower subforest of $F$.
\end{itemize}

The subforest defined by $L_{i-1}\setminus L_{i}$ is referred to as the \emph{$i$-th layer} of the layering and the subforests defined by $L_i\setminus L_j$ for $0\leq i\leq j\leq n$ are \emph{admissible subforests}.
\end{definition}

Admissible subforests are sometimes also called \emph{subforests obtained via iterated cuts} or \emph{convex subforests} \cite[Example 7.12(1)]{restr}.

\begin{example}\label{convex}
Consider the labelled trees with three vertices, with root vertex of degree 1 or degree 2. Then the nonempty admissible subforests for these two examples are as follows:
\[ \begin{array}{ccccccc}\qquad
\begin{tikzpicture}[baseline=-1mm,level distance=5mm, grow'=up, every node/.style={circle,draw,inner sep=1.5pt}]
\node[fill=black,"{$a$,}" right ]{} 
      child{node["$b$" right ]{} child{node["$c$" right ]{}}};
\end{tikzpicture}
\quad&\quad
\begin{tikzpicture}[baseline=-1mm,level distance=5mm, grow'=up, every node/.style={circle,draw,inner sep=1.5pt}]
\node[fill=black,"{$a$,}" right ]{} child{node["$b$" right ]{} };
\end{tikzpicture}
\quad&\quad
\begin{tikzpicture}[baseline=-1mm,level distance=5mm, grow'=up, every node/.style={circle,draw,inner sep=1.5pt}]
\node[fill=black,"{$b$,}" right ]{} child{node["$c$" right ]{} };
\end{tikzpicture}
\quad&\quad\bullet a,\quad&\quad\bullet b,\quad&\quad\bullet c\,.&\\[2mm]
\begin{tikzpicture}[baseline=-1mm,level distance=5mm, sibling distance=7mm, grow'=up, every node/.style={circle,draw,inner sep=1.5pt}]
\node[fill=black,"{$b$,}" right ]{} child{node["$a$" left ]{} }
                                    child{node["$c$" right]{} };
\end{tikzpicture}
&
\begin{tikzpicture}[baseline=-1mm,level distance=5mm, grow'=up, every node/.style={circle,draw,inner sep=1.5pt}]
\node[fill=black,"{$b$,}" right ]{} child{node["$a$" right ]{} };
\end{tikzpicture}
&
\begin{tikzpicture}[baseline=-1mm,level distance=5mm, grow'=up, every node/.style={circle,draw,inner sep=1.5pt}]
\node[fill=black,"{$b$,}" right ]{} child{node["$c$" right ]{} };
\end{tikzpicture}
&\bullet a,&\bullet b,&\bullet c,&\quad\;\bullet a\;\bullet\! c\,.
\end{array} \]
\end{example}

\begin{definition}\label{simpdef} 
	Let $(T,v_0)$ be a labelled rooted tree. We define a simplicial set $X^T$ associated to $T$ as follows.
	\begin{enumerate}
		\item The set $X^T_0$ contains just the `layering of $-1$ cuts' of the empty tree.
  
		\item The set $X^T_1$ is the set of admissible subforests of $T$. 
  
        \item For $n \geq 2$, the set $X^T_n$ is the set of all admissible subforests  $H\in X^T_1$ together with a layering of $n-1$ cuts $V(H)\supseteq L_1\supseteq\dots\supseteq L_n=\varnothing$.  Therefore, an element of $X^T_n$ can be considered as a sequence of nested subforests $H\supseteq L_1\supseteq\dots\supseteq L_n=\varnothing$.
			
        \item The face maps $d_i \colon X^T_n\to X^T_{n-1}$ are defined as follows. 
			\begin{enumerate}
				\item The map $d_0$ removes the first cut and the top layer, replacing the subforest $H$ by its restriction $H|L_1$ to the vertices $L_1$: 
                \[ d_0(H \supseteq \dots \supseteq L_n)=(L_1 \supseteq \dots \supseteq L_n).\]
				
                \item The map $d_n$ removes the last cut and the bottom layer:
				\[ d_n(H \supseteq \dots \supseteq L_n)=(H\setminus L_{n-1}\supseteq \dots \supseteq L_{n-1}\setminus L_{n-1}).\]
    
				\item For $0<i<n$ the map $d_i$ removes the $i$-th cut and merges the $i$-th and $(i+1)$-st layer:
                \[d_i(H\supseteq\dots\supseteq L_n)=(H\supseteq \dots \supseteq L_{i-1}\supseteq L_{i+1}\supseteq\dots \supseteq L_{n}).\]
			\end{enumerate}
		
        \item The degeneracy map $s_i \colon X^T_{n}\to X^T_{n+1}$ repeats the $i$-th cut, adding an empty $i$-th layer:
        \[s_i(H \supseteq \dots\supseteq L_n)=(H\supseteq \dots \supseteq L_{i}\supseteq L_i\supseteq \dots \supseteq L_{n}).\]
	\end{enumerate}
\end{definition}

We leave it to the reader to verify that these face and degeneracy maps satisfy the simplicial relations.  Observe that the modification for the face map $d_n$ is necessary to ensure that the last subset in the layering is $\varnothing$, as required.  Example \ref{convex} describes the set $X_1^T$ for two particular examples, namely the labelled trees on 3 vertices, and Figure \ref{figtr2} depicts the action of the face map $d_0$ on a particular element of $X_3^T$.
	
\begin{example}
    For any admissible subforest $H$, when $n=2$ we see that $d_0(H\supseteq L_1 \supseteq \varnothing)$ is just the lower subforest of the cut, $d_2(H\supseteq L_1\supseteq\varnothing)$ is just the upper subforest of the cut, and $d_1(H\supseteq L_1\supseteq\varnothing)$ recovers the subforest $H$ but forgets the cut.
\end{example}

\begin{definition}\label{unlabelled-X^T}
Let $T$ be an unlabelled (planar or non-planar) rooted tree.  Two layerings of $n-1$ cuts of admissible subforests of $T$ are \emph{isomorphic},
\[(H=L_0\supseteq L_1\supseteq\dots\supseteq L_n=\varnothing) \cong (H'=L_0'\supseteq L_1'\supseteq\dots\supseteq L_n'=\varnothing), \]
if there is an isomorphism $f\colon H\stackrel\cong\to H'$ of (planar or non-planar) forests inducing bijections $L_i\cong L'_i$ for each $i$.

The simplicial set $X^T$ associated to $T$ consists of the sets $X_n^T$ of isomorphism classes of layerings of $n-1$ cuts of admissible subforests  of $T$, with the simplicial face and degeneracy maps induced by those of Definition \ref{simpdef}.
\end{definition}

The simplicial sets described in Definitions \ref{simpdef} and \ref{unlabelled-X^T} are $2$-Segal but not $1$-Segal in general, as we now show.

\begin{prop}
If $T$ is a tree, which may be labelled or unlabelled, planar or non-planar, then $X^T$ is a 2-Segal set.  
\end{prop}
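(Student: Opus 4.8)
The plan is to reduce the $2$-Segal condition to a short list of pullback-square conditions and then verify each of them by an explicit reconstruction of layerings. Recall from \cite{dk} that any triangulation of an $(n+1)$-gon is obtained by successively cutting along diagonals, and that the associated $2$-Segal maps compose via pullback squares; consequently $X^T$ is $2$-Segal as soon as, for every $n\ge 3$ and every single diagonal $\{i,j\}$ of the $(n+1)$-gon (so $0\le i<j\le n$, $j-i\ge 2$, $\{i,j\}\neq\{0,n\}$), the map
\[ X^T_n \longrightarrow X^T_{\{i,i+1,\dots,j\}}\times_{X^T_{\{i,j\}}}X^T_{\{0,\dots,i,j,\dots,n\}}\;\cong\;X^T_{j-i}\times_{X^T_1}X^T_{n-(j-i)+1} \]
is a bijection. (Equivalently, one may check that the two d\'ecalages of $X^T$ satisfy the Segal condition.) Since $X^T_0$ is a point, all of these fibre products are taken over $X^T_1$ only.

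I would next record a convenient description of the simplices. Writing $P_k=L_{k-1}\setminus L_k$ for the $k$-th layer, an element of $X^T_n$ is the same datum as an admissible subforest $H$ of $T$ together with an ordered decomposition $H=P_1\sqcup\cdots\sqcup P_n$ of its vertex set into (possibly empty) layers such that each suffix $P_k\sqcup P_{k+1}\sqcup\cdots\sqcup P_n$ is a lower subforest of $H$. In these terms, unwinding Definition \ref{simpdef} shows that the face on a subset of vertices $\{i_0<\cdots<i_m\}$ sends $(H;P_1,\dots,P_n)$ to $\bigl(\,\bigsqcup_{l=i_0+1}^{i_m}P_l\,;\,Q_1,\dots,Q_m\bigr)$ with $Q_k=\bigsqcup_{l=i_{k-1}+1}^{i_k}P_l$: one deletes the layers lying outside the window $(i_0,i_m]$ and merges those that remain according to the marks $i_0<\cdots<i_m$. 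In particular the displayed single-diagonal map carries $(H;\vec P)$ to the pair consisting of the \emph{window simplex} $(P_{i+1}\sqcup\cdots\sqcup P_j;P_{i+1},\dots,P_j)$ and the \emph{merged simplex} $(H;P_1,\dots,P_i,\,P_{i+1}\sqcup\cdots\sqcup P_j,\,P_{j+1},\dots,P_n)$, and these share the edge $P_{i+1}\sqcup\cdots\sqcup P_j$ over which the fibre product is formed.

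Given a compatible pair in the fibre product, the preimage is obtained by \emph{splicing}: the compatibility condition identifies the ambient forest of the window simplex with the $(i+1)$-st layer of the merged simplex, so one replaces that single merged layer by the finer layering carried by the window simplex, leaving all other layers untouched. This is the only possible candidate, so injectivity is automatic; for surjectivity one must check that the spliced sequence $H=L_0\supseteq\cdots\supseteq L_n=\varnothing$ is again a layering, i.e.\ that each $L_k$ is a lower subforest of $H$. This is the crux of the computation, but it is a purely local check. A suffix of the spliced sequence that lies entirely inside the window, or entirely outside it, is a lower subforest because it was so in one of the two given pieces; the only new cases are the ``mixed'' suffixes $M\sqcup N$, where $M$ is a lower subforest of the window forest $A=P_{i+1}\sqcup\cdots\sqcup P_j$ and $N$ is the part below $A$. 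Here one uses that both $N$ and $A\sqcup N$ are lower subforests of $H$ (they are suffixes of the merged simplex), that $A$ carries the poset induced from $H$, and hence that a down-closed subset of $A$ together with a lower subforest of $H$ below it is again down-closed in $H$. Running this for each single diagonal establishes the $2$-Segal property for labelled $T$.

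The step I expect to be the genuine obstacle is the unlabelled (planar or non-planar) case, in which $X^T_n$ consists of isomorphism classes of layerings. There the compatibility condition only provides an \emph{isomorphism} between the ambient of the window simplex and the merged layer of the other simplex, so before splicing one must transport the window layering along this isomorphism, and then argue that the isomorphism class of the spliced layering is independent of that choice (for surjectivity and well-definedness of the inverse) and that an isomorphism of window simplices together with an isomorphism of merged simplices can be amalgamated into an isomorphism of the glued layerings (for injectivity). The delicate point is that the amalgamated bijection $H\to H'$ must still preserve the edges of $H$ running between the window and its complement, even though the two given isomorphisms were only constrained on the window and on the merged quotient separately; handling this compatibility — or, alternatively, deducing the unlabelled statement from the labelled one — is where the real work lies.
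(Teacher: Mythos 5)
Your labelled-case argument is correct and is essentially the paper's own proof. Both reduce the $2$-Segal condition to pullback squares over $X^T_1$ --- the paper via the two families of squares in \cite[Proposition 2.3.2(4)]{dk}, you via the equivalent criterion that each single diagonal of the polygon induces a bijection --- and both invert the canonical map by splicing the window layering into the corresponding merged layer. Your check that the spliced chain is again a layering (the ``mixed suffix'' $M\sqcup N$ argument, using that $N$ and $A\sqcup N$ are lower in $H$ while $M$ is lower in $A$) is the one point of substance, and it is carried out correctly; the paper asserts this step without comment.

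The gap is the unlabelled case, which you flag but do not close, and your suspicion that amalgamating the two given isomorphisms is the real difficulty is well founded: it cannot be done with the definitions as given. Let $T$ be the unlabelled non-planar rooted tree with root $r$, children $a,b$ of $r$, children $c,d$ of $a$, and child $e$ of $b$. The layerings $T\supseteq\{r,a,b\}\supseteq\{r,a\}\supseteq\varnothing$ and $T\supseteq\{r,a,b\}\supseteq\{r,b\}\supseteq\varnothing$ represent distinct elements of $X^T_3$, since every automorphism of $T$ fixes $a$ and $b$ (the subtrees above them are not isomorphic). Yet they have the same image under $(d_2,d_0)$: the $d_2$-faces agree on the nose, and the $d_0$-faces are the cherry spanned by $\{r,a,b\}$ with one or the other leaf removed by the cut, which are isomorphic layered forests via the leaf swap. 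Hence the $2$-Segal map for the triangulation of the square along the diagonal $\{1,3\}$ is not injective, and no choice of splicing --- yours or the paper's --- can be made well defined on isomorphism classes. So the step you deferred is not a technical detail to be supplied later but the place where the argument (and, in this generality, the proposition itself) breaks for non-planar unlabelled trees; a complete treatment must either restrict to the labelled setting, where your proof and the paper's go through, or replace sets of isomorphism classes by groupoids of layerings.
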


\begin{proof}
By \cite[Proposition 2.3.2(4)]{dk} it is enough to check that the following commutative diagrams are pullback squares, for each $n\geq3$ and each $0<i<n-1$:
\[ \begin{tikzcd}
    X^T_n\ar[rr,"d_1\dots d_i"] \ar[d,"d_{i+2}\dots d_{n  }"'] && X^T_{n-i}
   \ar[d,"d_{  2}\dots d_{n-i}" ] &&&
    X^T_n\ar[rr,"d_{i+1}\cdots d_{n-1}"] \ar[d,"d_{0}\dots d_{i-1}"'] && X^T_{i+1}\ar[d,"d_{0}\dots d_{i-1}" ] \\      
    X^T_{i+1}\ar[rr,"d_1\dots d_i"] && X^T_1 &&& X^T_{n-i}\ar[rr,"d_1\dots d_{n-i-1}"]&&X_1^T.
\end{tikzcd} \]
For example, the case $n=3$, $i=1$ says that the 2-Segal maps $X_3^T \to X_2^T \times_{X_1^T}X_2^T$ induced by the triangulations of Example \ref{squares} are isomorphisms, as illustrated in Figure \ref{2segfig}. 
\begin{figure}[H]
    \centering
   \begin{tikzpicture}[scale=1.3,thick]
		\tikzstyle{every node}=[minimum width=0pt, inner sep=2pt, circle]
			\draw (-4.2,2.55) node[draw] (0) {};
			\draw (-3.89,2.2) node[draw] (1) {};
			\draw (-3.57,2.59) node[draw] (2) {};
			\draw (-4.76,2.29) node[draw] (3) {};
			\draw (-4.4,1.47) node[draw] (4) {};
			\draw (-4.4,1.86) node[draw] (5) {};
			\draw (-4.86,2.01) node (6) {};
			\draw (-3.86,1.99) node (7) {};
			\draw (-4.08,0.98) node[draw] (8) {};
			\draw (-3.7,1.45) node[draw] (9) {};
			\draw (-4.58,1.16) node (10) {};
			\draw (-3.59,1.15) node (11) {};
			\draw (-4.1,0.46) node[draw, fill=black] (12) {};
			\draw (-4.51,0.3) node (13) {};
			\draw (-4.51,-0.4) node (14) {};
			\draw (-4.36,-0.76) node[draw] (15) {};
			\draw (-4.06,-1.11) node[draw] (16) {};
			\draw (-3.79,-0.74) node[draw] (17) {};
			\draw (-4.82,-1.17) node[draw] (18) {};
			\draw (-4.5,-1.91) node[draw, fill=black] (19) {};
			\draw (-4.5,-1.51) node[draw] (20) {};
			\draw (-5,-1.4) node (21) {};
			\draw (-3.8,-1.4) node (22) {};
			\draw (-3.8,-1.9) node[draw, fill=black]{};
            \draw (-4.7,1) node (X3){$X_3$};
            \draw (-4.7,-0.8) node (X2){$X_2$};
            \draw (-4.3,0) node (d3){$d_3$};
            \draw (-3.3,2) node (2d){};
            \draw (-2.3,2) node (1d){};
            \draw (-2.8,2.15) node (d1){$d_1$};
            \draw (-3.3,-1.5) node (3d){};
            \draw (-2.3,-1.5) node (4d){};
            \draw (-2.8,-1.7) node (d1){$d_1$};
            
			\draw  (1) edge (2);
			\draw  (0) edge (1);
			\draw  (3) edge (4);
			\draw  (1) edge (4);
			\draw  (4) edge (5);
			\draw[dashed]  (6) edge (7);
			\draw  (4) edge (8);
			\draw  (8) edge (9);
			\draw[dashed]  (10) edge (11);
			\draw  (8) edge (12);
			\draw  (16) edge (17);
			\draw  (15) edge (16);
			\draw  (16) edge (19);
			\draw  (19) edge (20);
			\draw  (18) edge (19);
			\draw[dashed]  (21) edge (22);
			\draw[->]  (13) edge (14);
            \draw[->] (2d) edge (1d);
            \draw[->] (3d) edge (4d);
		\tikzstyle{every node}=[minimum width=0pt, inner sep=2pt, circle]
			\draw (-1.2,2.55) node[draw] (0x) {};
			\draw (-0.89,2.2) node[draw] (1x) {};
			\draw (-0.57,2.59) node[draw] (2x) {};
			\draw (-1.76,2.29) node[draw] (3x) {};
			\draw (-1.4,1.47) node[draw] (4x) {};
			\draw (-1.4,1.86) node[draw] (5x) {};
			\draw (-1.86,2.01) node (6x) {};
			\draw (-0.86,1.99) node (7x) {};
			\draw (-1.08,0.98) node[draw] (8x) {};
			\draw (-0.7,1.45) node[draw] (9x) {};
			\draw (-1.58,1.16) node (10x) {};
			\draw (-0.59,1.15) node (11x) {};
			\draw (-1.1,0.46) node[draw, fill=black] (12x) {};
			\draw (-1.51,0.3) node (13x) {};
			\draw (-1.51,-0.4) node (14x) {};
			\draw (-1.36,-0.76) node[draw] (15x) {};
			\draw (-1.06,-1.11) node[draw] (16x) {};
			\draw (-0.79,-0.74) node[draw] (17x) {};
			\draw (-1.82,-1.17) node[draw] (18x) {};
			\draw (-1.5,-1.91) node[draw, fill=black] (19x) {};
			\draw (-1.5,-1.51) node[draw] (20x) {};
			\draw (-2,-1.38) node (21x) {};
			\draw (-0.8,-1.42) node (22x) {};
			\draw (-0.8,-1.9) node[draw, fill=black]{};
            \draw (-1.7,1) node (X3){$X_2$};
            \draw (-1.7,-0.8) node (X2){$X_1$};
            \draw (-1.3,0) node (d2){$d_2$};
			\draw  (1x) edge (2x);
			\draw  (0x) edge (1x);
			\draw  (3x) edge (4x);
			\draw  (1x) edge (4x);
			\draw  (4x) edge (5x);		
			\draw  (4x) edge (8x);
			\draw  (8x) edge (9x);
			\draw  (8x) edge (12x);
			\draw  (16x) edge (17x);
			\draw  (15x) edge (16x);
			\draw  (16x) edge (19x);
			\draw  (19x) edge (20x);
			\draw  (18x) edge (19x);
            \draw[dashed]  (10x) edge (11x);
			\draw[->]  (13x) edge (14x);

\tikzstyle{every node}=[minimum width=0pt, inner sep=2pt, circle]
			\draw (-4.2+6,2.55) node[draw] (0y) {};
			\draw (-3.89+6,2.2) node[draw] (1y) {};
			\draw (-3.57+6,2.59) node[draw] (2y) {};
			\draw (-4.76+6,2.29) node[draw] (3y) {};
			\draw (-4.4+6,1.47) node[draw] (4y) {};
			\draw (-4.4+6,1.86) node[draw] (5y) {};
			\draw (-4.86+6,2.01) node (6y) {};
			\draw (-3.86+6,1.99) node (7y) {};
			\draw (-4.08+6,0.98) node[draw] (8y) {};
			\draw (-3.7+6,1.45) node[draw] (9y) {};
			\draw (-4.58+6,1.16) node (10y) {};
			\draw (-3.59+6,1.15) node (11y) {};
			\draw (-4.1+6,0.46) node[draw, fill=black] (12y) {};
			\draw (-4.5+6,0.3) node (13y) {};
			\draw (-4.5+6,-0.4) node (14y) {};
			\draw (-4.06+6,-1.11) node[draw] (16y) {};
			\draw (-4.82+6,-1.17) node[draw] (18y) {};
            \draw (-4.82+6,-0.7) node[draw] (17y) {};
			\draw (-4.5+6,-1.61) node[draw] (19y) {};
			\draw (-4.5+6,-2.11) node[draw, fill=black] (20y) {};
			\draw (-5+6,-1.4) node (21y) {};
			\draw (-3.8+6,-1.4) node (22y) {};
            \draw (-4.7+6,1) node (X3y){$X_3$};
            \draw (-4.7+7,-0.8) node (X2y){$X_2$};
            \draw (-4.3+6,0) node (d3y){$d_0$};
            \draw (-3.3+6,2) node (2dy){};
            \draw (-2.3+6,2) node (1dy){};
            \draw (-2.8+6,2.15) node (d1y){$d_2$};
            \draw (-3.3+6,-1.5) node (3dy){};
            \draw (-2.3+6,-1.5) node (4dy){};
            \draw (-2.8+6,-1.7) node (d1y){$d_1$};

            \draw (17y) edge (18y);
			\draw  (1y) edge (2y);
			\draw  (0y) edge (1y);
			\draw  (3y) edge (4y);
			\draw  (1y) edge (4y);
			\draw  (4y) edge (5y);
			\draw  (4y) edge (8y);
			\draw  (8y) edge (9y);
			\draw  (8y) edge (12y);
			\draw  (16y) edge (19y);
			\draw  (19y) edge (20y);
			\draw  (18y) edge (19y);
            \draw[dashed]  (6y) edge (7y);
            \draw[dashed]  (10y) edge (11y);
			\draw[dashed]  (21y) edge (22y);
			\draw[->] (13y) edge (14y);
            \draw[->] (2dy) edge (1dy);
            \draw[->] (3dy) edge (4dy);
		
			\draw (-1.2+6,2.55) node[draw]  (0xy) {};
			\draw (-0.89+6,2.2) node[draw]  (1xy) {};
			\draw (-0.57+6,2.59) node[draw] (2xy) {};
			\draw (-1.76+6,2.29) node[draw] (3xy) {};
			\draw (-1.4+6,1.47) node[draw] (4xy) {};
			\draw (-1.4+6,1.86) node[draw] (5xy) {};
			\draw (-1.86+6,1.99) node (7xy) {};
			\draw (-1.08+6,0.98) node[draw](8xy) {};
			\draw (-0.7+6,1.45) node[draw] (9xy) {};
			\draw (-1.58+6,1.16) node (10xy) {};
			\draw (-0.59+6,1.15) node (11xy) {};
			\draw (-1.1+6,0.46) node[draw, fill=black] (12xy) {};
			\draw (-1.51+6,0.3) node  (13xy) {};
			\draw (-1.51+6,-0.4) node (14xy) {};
			\draw (-1.06+6,-1.11) node[draw] (16xy) {};
			\draw (-1.82+6,-0.7) node[draw] (17xy) {};
			\draw (-1.82+6,-1.17) node[draw] (18xy) {};
			\draw (-1.5+6,-1.61) node[draw] (19xy) {};
			\draw (-1.5+6,-2.11) node[draw, fill=black] (20xy) {};
			\draw (-2+6,-1.38) node  (21xy) {};
			\draw (-0.8+6,-1.42) node (22xy) {};
            \draw (-1.7+6,1) node (X3y){$X_2$};
            \draw (-1.7+7,-0.8) node (X2y){$X_1$};
            \draw (-1.3+6,0) node (d2y){$d_0$};
			\draw  (1xy) edge (2xy);
			\draw  (0xy) edge (1xy);
			\draw  (3xy) edge (4xy);
			\draw  (1xy) edge (4xy);
			\draw  (4xy) edge (5xy);		
			\draw  (4xy) edge (8xy);
			\draw  (8xy) edge (9xy);
			\draw  (8xy) edge (12xy);
			\draw (17xy) edge (18xy);
			\draw  (16xy) edge (19xy);
			\draw  (19xy) edge (20xy);
			\draw  (18xy) edge (19xy);
            \draw[dashed]  (10xy) edge (11xy);
			\draw[->]  (13xy) edge (14xy);            
		\end{tikzpicture}
    \caption{Example of the 2-Segal property. To recover the trees with two cuts in the upper left corners, observe that the trees in the top right corners contain all the necessary data except one cut. The information of this missing cut is present in the bottom left corners. Their intersection, the tree in the bottom right corner, informs us how to glue these two trees, hence determines the missing cut.}
    \label{2segfig}
\end{figure}
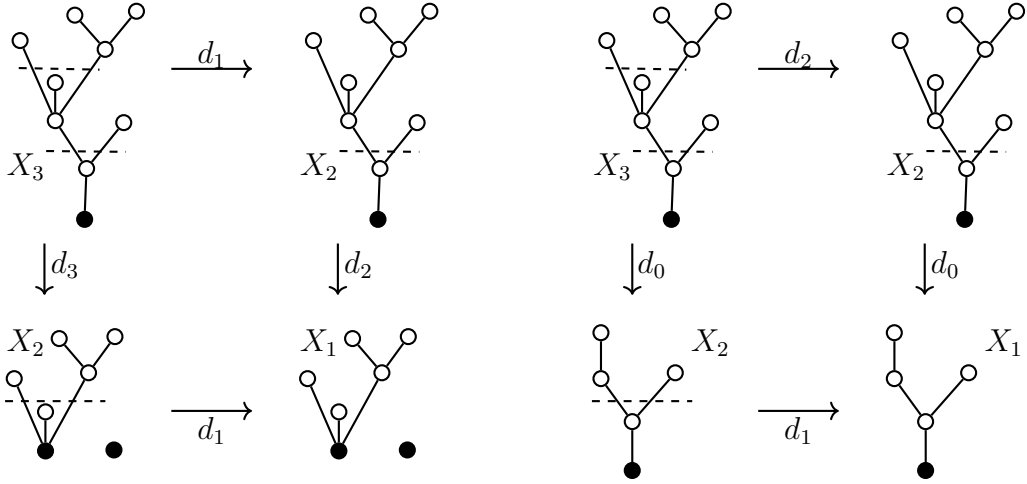

For the first diagram, the pullback is the set of triples
\[ \left( (H'\supseteq L_1'\supseteq\dots\supseteq L_{i+1}'=\varnothing), (H'''\supseteq\varnothing), (H''\supseteq L_1''\supseteq\dots\supseteq L_{n-i}''=\varnothing) \right)\in 
X^T_{i+1}\times X_1^T\times X^T_{n-i} \]
such that $H'=H'''=H''\setminus L_1''$. We can therefore construct an element
\[ (H''\supseteq L_1' \! \sqcup\!L_1''\supseteq\dots\supseteq L_i'\!\sqcup\!L_1'' \; \supseteq\; L_1''\supseteq\dots\supseteq L_{n-i}''=\varnothing) \in X^T_n,  \]
providing the inverse of the canonical map from $X^T_n$ to the pullback.
For the second diagram the elements of the pullback are the triples
\[ \left( (H'\supseteq L_1'\supseteq\dots\supseteq L_{n-i}'=\varnothing), (H'''\supseteq\varnothing), (H''\supseteq L_1''\supseteq\dots\supseteq L_{i+1}''=\varnothing) \right)\in 
X^T_{n-i}\times X_1^T\times X^T_{i+1} \]
such that $H'=H'''=H''\backslash L_i''$.  Again, the map sending such an element to
\[ (H''\supseteq L_1''\supseteq\dots\supseteq L_i'' \;\supseteq\; L_1' \supseteq \dots \supseteq L_{n-i}'=\varnothing) \in X^T_n \]
provides the inverse to the canonical map $X^T_n\to X^T_{n-i}\times _{X_1^T} X^T_{i+1}$.
\end{proof}

The simplicial set $X^T$ is never $1$-Segal, unless $T$ is the empty tree. Indeed, if $T \neq\varnothing$ then the Segal map
\[ (d_0,d_2) \colon X_2^T\to X_1^T\times X_1^T \]
is not surjective as $(T,T)$ is not in the image.

In the unlabelled case the Segal map can also fail to be injective, as depicted in Figure \ref{fignot1seg}.
\begin{figure}[H]
    \centering
    \begin{tikzpicture}[scale=1.5,thick]
		\tikzstyle{every node}=[minimum width=0pt, inner sep=2pt, circle]
			\draw (-2.95,2.44) node[draw] (0) { };
			\draw (-2.67,2.06) node[draw] (1) {};
			\draw (-2.39,2.51) node[draw] (2) {};
			\draw (-3.28,2.1) node[draw] (3) {};
			\draw (-2.97,1.5) node[draw, fill=black] (4) {};
			\draw (-2.97,1.8) node[draw] (5) {};
			\draw (-3.5,1.9) node (6) {};
			\draw (-2.5,1.9) node (7) {};
			\draw (-2.6,1.5) node[draw, fill=black] (8) {};
			\draw (-2.41,1.34) node (9) {};
			\draw (-3.73,1.77) node (10) {$X_2$};
			\draw (-1.8,0.42) node (11) {$X_1$};
            \draw (-3.8,1.2) node {$d_2$};
            \draw (-2.1,1.2) node {$d_0$};
			\draw (-1.94,0.67) node (12) {};
			\draw (-3.39,1.36) node (13) {};
			\draw (-3.93,0.7) node (14) {};
			\draw (-4.1,0.5) node[draw] (15) {};
			\draw (-3.78,-0.1) node[draw, fill=black] (16) {};
			\draw (-3.45,0.5) node[draw] (17) {};
			\draw (-4.1,-0.1) node (18) {$X_1$};
			\draw (-2.27,-0.1) node[draw, fill=black] (19) {};
			\draw (-2.25,0.5) node[draw] (20) {};
			\draw (-1.84,-0.1) node[draw, fill=black] (21) {};
			\draw (-3.3,-0.1) node[draw, fill=black] (22) {};
			\draw  (0) edge (1);
			\draw  (3) edge (4);
			\draw  (1) edge (4);
			\draw  (4) edge (5);
			\draw[dashed]  (6) edge (7);
			\draw  (16) edge (17);
			\draw  (15) edge (16);
			\draw  (19) edge (20);
			\draw[->]  (13) edge (14);
			\draw  (1) edge (2);
			\draw[->]  (9) edge (12);

            \draw (-2.95+4,2.44) node[draw] (0x) { };
			\draw (-2.67+4,2.1) node[draw] (1x) {};
			\draw (-2.39+4,2.51) node[draw] (2x) {};
			\draw (-3.28+4,2.1) node[draw] (3x) {};
			\draw (-3.28+4,1.5) node[draw, fill=black] (4x) {};
			\draw (-2.97+4,1.8) node[draw] (5x) {};
			\draw (-3.5+4,1.9) node (6x) {};
			\draw (-2.2+4,1.9) node (7x) {};
			\draw (-2.67+4,1.5) node[draw, fill=black] (8x) {};
			\draw (-2.41+4,1.34) node (9x) {};
			\draw (-3.73+4,1.77) node (10x) {$X_2$};
			\draw (-1.8+4,0.42) node (11x) {$X_1$};
            \draw (-3.8+4,1.2) node {$d_2$};
            \draw (-2.1+4,1.2) node {$d_0$};
			\draw (-1.94+4,0.67) node (12x) {};
			\draw (-3.39+4,1.36) node (13x) {};
			\draw (-3.93+4,0.7) node (14x) {};
			\draw (-4.1+4,0.5) node[draw] (15x) {};
			\draw (-3.78+4,-0.1) node[draw, fill=black] (16x) {};
			\draw (-3.45+4,0.5) node[draw] (17x) {};
			\draw (-4.1+4,-0.1) node (18) {$X_1$};
			\draw (-2.27+4,-0.1) node[draw, fill=black] (19x) {};
			\draw (-2.25+4,0.5) node[draw] (20x) {};
			\draw (-1.84+4,-0.1) node[draw, fill=black] (21x) {};
			\draw (-3.3+4,-0.1) node[draw, fill=black] (22x) {};
			\draw  (0x) edge (1x);
			\draw  (3x) edge (4x);
			\draw  (1x) edge (8x);
			\draw  (4x) edge (5x);
			\draw  (1x) edge (2x);
			\draw  (16x) edge (17x);
			\draw  (15x) edge (16x);
			\draw  (19x) edge (20x);
			\draw[->] (13x) edge (14x);
			\draw[->]  (9x) edge (12x);
			\draw[dashed](6x) edge (7x);
		\end{tikzpicture}
    \caption{The failure of injectivity for the 1-Segal condition in the unlabelled case.} \label{fignot1seg}
\end{figure}
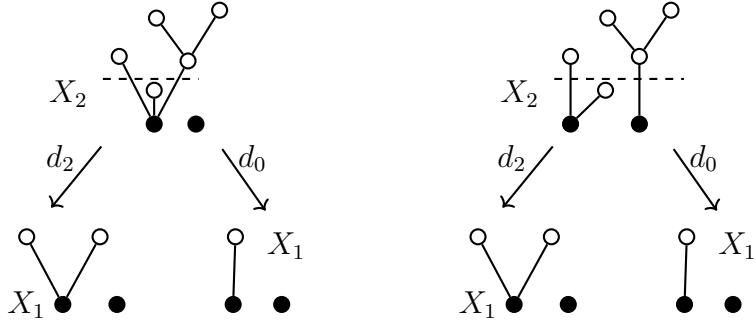

However, in the labelled case, injectivity always holds.

\begin{prop}\label{prop:comp} 
If $T$ is a labelled tree with vertex set $V$ then the Segal map
\[ (d_0,d_2)\colon X_2^T\to X_1^T\times X_1^T \]
is injective. Its fibre over an arbitrary element $(H_1,H_2)\in X_1^T\times X_1^T$ is either empty or consists of a single element $(H\supseteq V(H_1)\supseteq\varnothing)$, where the vertex set of $H$ is the disjoint union of the vertex sets of $H_1$ and $H_2$ in $V$.
\end{prop}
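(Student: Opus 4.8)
The plan is to exploit the rigidity of the labelled setting. Since the vertices of $T$ carry distinct labels, every subforest of $T$ — and in particular every admissible subforest — is completely determined by its vertex set, the edges being inherited from $T$. Under this identification an element of $X_2^T$ is a chain $(H \supseteq L_1 \supseteq \varnothing)$, i.e.\ a nested pair of vertex subsets $L_1 \subseteq V(H) \subseteq V$ such that $H$ is the admissible subforest spanned by $V(H)$ and $L_1$ defines a lower subforest of $H$; note that $V(H) = L_1 \sqcup (V(H)\setminus L_1)$. Reading off the face maps from Definition \ref{simpdef}, the map $d_0(H \supseteq L_1 \supseteq \varnothing)$ is the admissible subforest with vertex set $L_1$ (the restriction $H|L_1$, which as a subforest of $T$ is just the span of $L_1$), while $d_2(H \supseteq L_1 \supseteq \varnothing)$ is the admissible subforest with vertex set $V(H)\setminus L_1$ (the upper subforest $H \setminus L_1$).

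I would then establish the fibre description, from which injectivity follows at once. Fix $(H_1, H_2)\in X_1^T\times X_1^T$ and suppose $(H \supseteq L_1 \supseteq \varnothing)$ lies in its fibre under $(d_0,d_2)$. The condition $d_0(H\supseteq L_1\supseteq\varnothing)=H_1$ forces $L_1 = V(H_1)$ as vertex sets, and $d_2(H\supseteq L_1\supseteq\varnothing)=H_2$ forces $V(H)\setminus L_1 = V(H_2)$; together these give $V(H) = V(H_1)\sqcup V(H_2)$ — so in particular $V(H_1)$ and $V(H_2)$ must be disjoint — and $H$ is then the subforest of $T$ spanned by $V(H_1)\cup V(H_2)$. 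Hence the fibre element, if one exists, is uniquely determined: it is $(H\supseteq V(H_1)\supseteq\varnothing)$ with $V(H)=V(H_1)\sqcup V(H_2)$, as claimed. Injectivity of $(d_0,d_2)$ is precisely the statement that every fibre contains at most one element, which is exactly what we have just verified.

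To finish, I would check that this forced candidate, \emph{when} it is a genuine element of $X_2^T$ — that is, when $V(H_1)\cap V(H_2)=\varnothing$, the span $H$ of $V(H_1)\cup V(H_2)$ is admissible, and $V(H_1)$ defines a lower subforest of $H$ — actually maps to $(H_1,H_2)$; this is automatic, since the restriction of the span $H$ to $V(H_1)$ (resp.\ to $V(H_2)$) is the span of that vertex set in $T$, namely $H_1$ (resp.\ $H_2$). If any of those three conditions fails, no simplex of $X_2^T$ can map to $(H_1,H_2)$, and the fibre is empty. I do not anticipate a serious obstacle: the proposition is essentially a bookkeeping exercise, the only delicate points being the rigidity observation that underpins everything, the correct translation of $d_0$ and $d_2$ into operations on vertex subsets, and the (automatic) fact that restricting a span to a subset returns the span of that subset. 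The genuinely substantive question — an intrinsic criterion on $(H_1, H_2)$ for "composability", i.e.\ for the fibre to be nonempty — is not needed for this statement.
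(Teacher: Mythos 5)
Your proposal is correct and follows essentially the same route as the paper's own (much terser) proof: both rest on the observation that in the labelled case an admissible subforest is determined by its vertex set, so the only possible fibre element over $(H_1,H_2)$ is $(H\supseteq V(H_1)\supseteq\varnothing)$ with $V(H)=V(H_1)\sqcup V(H_2)$, and the fibre is empty exactly when disjointness, admissibility of the span, or the lower-subforest condition fails. Your write-up simply makes explicit the translation of $d_0,d_2$ into operations on vertex subsets and the verification that the forced candidate does map to $(H_1,H_2)$, which the paper leaves implicit.
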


\begin{proof}
    In the labelled case, the vertex sets $L$ and $U$ of $H_1$ and $H_2$ are specific subsets of $V$, and if there is an element in the fibre it is given by $(H\supseteq L\supseteq\varnothing)$ with $V(H)\setminus L=U$.  There is no such element if $L$, $U$ are not disjoint, if their union does not define an admissible subforest $H$ of $T$, or if $L$ does not define a lower subforest of $H$. 
\end{proof}

A natural question is whether we can identify which subforests of $T$ are obtained from admissible cuts, and hence define elements of $X_1$.  For example, for a rooted tree with at least one edge, we do not obtain a subforest consisting of both the root and any other vertex of $T$ but no edges.  We do have the following result for trees, however.

\begin{prop} \label{all} 
    Let $(T, v_0)$ be a rooted tree and $S$ a subtree of $T$. Then $S$ can be obtained from $T$ by a sequence of admissible cuts.
\end{prop}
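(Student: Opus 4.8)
The plan is to exhibit $S$ explicitly as one of the layers of a layering of two cuts of $T$, so that $S$ is an admissible subforest, i.e.\ obtained from $T$ by a sequence of (two) admissible cuts. Write $V(S)$ for the vertex set of $S$ and set
\[ L := \{\, v \in V(T) : v \le s \text{ for some } s \in V(S) \,\} \qquad\text{and}\qquad L' := L \setminus V(S). \]
The key preliminary observation I would record is that a subtree $S$ of $T$ is \emph{convex}: if $u \le w \le v$ with $u,v \in V(S)$, then $w \in V(S)$. Indeed, since $\{x : x \le v\}$ is the chain $v_0 < \dots < v$, the set $\{x : u \le x \le v\}$ is exactly the unique path from $u$ to $v$ in $T$; as $S$ is a connected subgraph of the tree $T$, that path must lie in $S$, and in particular $w \in V(S)$.

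Next I would check that both $L$ and $L'$ define lower subforests of $T$, i.e.\ are downward closed (``whenever $v_j$ is in the set and $(v_i,v_j)$ is an edge, so is $v_i$''). For $L$ this is immediate from the definition. For $L'$: suppose $v \in L'$ and $u \le v$. Then $u \in L$; and $u \notin V(S)$, since otherwise $u \le v \le s$ with $u,s \in V(S)$ and convexity would force $v \in V(S)$, contradicting $v \in L'$. Hence $u \in L'$, so $L'$ is downward closed.

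It follows that $V(T) = L_0 \supseteq L_1 := L \supseteq L_2 := L' \supseteq L_3 := \varnothing$ is a layering of two cuts of $T$, and $S = L_1 \setminus L_2$ is one of its layers; hence $S$ is obtained from $T$ by a sequence of admissible cuts, namely first cutting $T$ into the lower subtree $L$ and the upper subforest $V(T)\setminus L$ and retaining $L$, then cutting $L$ into the lower subforest $L'$ and the upper subforest $S$. The degenerate case $S = \varnothing$ is trivial (it is the upper subforest of the trivial cut $L = V(T)$). I expect the only genuine content — hence the ``main obstacle'', such as it is — to be the convexity observation and the resulting proof that $L' = L \setminus V(S)$ stays downward closed, which is precisely the place where $S$ being a \emph{subtree} rather than an arbitrary subset is used; everything else is bookkeeping against Definition~\ref{simpdef}.
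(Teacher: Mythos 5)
Your proof is correct, and it takes a genuinely different route from the paper's. The paper argues procedurally: it first observes that any single edge of $T$ determines an admissible cut, then prunes $T$ down to $S$ by an explicit sequence of such cuts --- first severing the edge joining $S$'s lowest vertex $w_0$ to the root side, then successively cutting away the branches at $w_0$ and at the other boundary vertices $w_1,\dots,w_m$ of $S$. Your argument instead exhibits $S$ in one stroke as the middle layer of the layering $V(T)\supseteq L\supseteq L\setminus V(S)\supseteq\varnothing$, where $L$ is the downward closure of $V(S)$; the only real content is the convexity of a connected subgraph of a tree, which is exactly what makes $L\setminus V(S)$ downward closed, and you correctly identify this as the point where the subtree hypothesis enters. (Two small facts you use implicitly are both fine: a connected subgraph of a tree is an induced subgraph, so the layer $L\setminus(L\setminus V(S))$ really spans $S$ and not something larger; and when $v_0\in S$ your $L'$ is forced to be empty, which your layering handles without modification.) What each approach buys: the paper's is closer in spirit to ``cutting'' as a local operation on edges and generalizes readily to trimming one branch at a time, while yours is shorter, avoids the case analysis on whether $v_0\in S$ and on vertex degrees, and proves the sharper statement that \emph{two} admissible cuts always suffice.
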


\begin{proof}
    Note that any single edge $e$ of $T$ determines an admissible cut, since $T\setminus e$ always has two connected components and, in particular, one of them must contain the root. We consider the case $v_0\not\in S$; the other case is similar. Let $w_0\in S$ be the closest vertex to the root $v_0$. Given a vertex $v$, denote by $e_r(v)=(u,v)$ the edge that lies on the unique path between $v$ and the root, and has $v$ as one of its endpoints. By cutting $T$ along the edge $e_r(w_0)$, we obtain two trees, $T_1$ and $T_2$ where $S \subseteq T_1$ and $v_0\in T_2$.  Given a subtree $H$, let $d_H(w)$ denote the degree of the vertex $w$ in $H$. If $d_S(w_0) \neq d_{T_1}(w_0)$, then we may successively carry out cuts along the set of edges $\{(w_0,v) \mid v \notin V(S)\}$. Denote the resulting subtree containing $w_0$ by $T_3$. Let $w_1, \dots, w_m$ be the vertices of $S$ that are connected by edges to $T_3\setminus S$, and let $c(w_i)=\{(w_i,v)\in E(T_3) \mid v \notin V(S)\}$ for $1 \leq i \leq m$.  Then the set of edges $c(w_i)$ determines an admissible cut $c_i$ of $T_3$ and, after taking the cuts $c_1, \dots, c_m$ we obtain the subtree $S$. 
\end{proof}

\begin{remark}\label{rem1}  
    The only tree that produces only subtrees, rather than subforests, from cuts is the linear tree $P_n$, in which the root and one other vertex have degree one, while the rest have degree two.  The only way to obtain a disconnected top layer is by taking a cut through edges with the same lower vertex, or through parallel edges with incomparable lower vertices, and neither case is possible for a linear tree.
\end{remark}

\section{The associated pointed stable double category} \label{double}

In this section, we begin by recalling the relationship between 2-Segal sets and augmented stable double categories.  We then consider certain families of 2-Segal sets associated to rooted trees and give explicit descriptions of their associated double categories.

Let us begin by recalling the definition of a double category.

\begin{definition}
    A (small) \emph{double category} is an internal category in the category of small categories, and a \emph{double functor} between double categories is an internal functor.
\end{definition}

In other words, a small double category $\mathcal{D}$ consists of a set of objects $\ob(\mathcal{D})$, a set of horizontal morphisms $\hor(\mathcal{D})$, a set of vertical morphisms $\ver(\mathcal{D})$, and a set of squares $\sq(\mathcal{D})$, and these sets are related by various source, target, identity and composition maps. In particular, $\Hor\mathcal{D} :=(\ob(\mathcal{D}), \hor(\mathcal{D))}$ and $\Ver\mathcal{D}:= (\ob(\mathcal{D}), \ver(\mathcal{D}))$ form categories. We typically denote horizontal morphisms by $\rightarrowtail$ and vertical morphisms by $\twoheadrightarrow$. 

As described in \cite{2s}, there is an equivalence of categories between 2-Segal sets and a particular subcategory of the category of small double categories.  Let us now set up the definitions we need to describe this subcategory.

\begin{definition}
    A double category $\mathcal{D}$ is \emph{pointed} if it is equipped with a distinguished object $\star$ that is initial in $\Hor\mathcal{D}$ and terminal in $\Ver\mathcal{D}$. A double functor $F \colon \mathcal{D}\rightarrow \mathcal{D'}$ between pointed double categories is \emph{pointed} if it sends the distinguished object of $\mathcal{D}$ to the distinguished object of $\mathcal{D}'$.
\end{definition}

Although most of the examples that we consider in this paper are pointed, we include the following more general definition.

\begin{definition}
    An \emph{augmentation} of a double category $\mathcal{D}$ consists of a set of objects $A$ such that, for every object $d$ of $\mathcal{D}$, there exist unique morphisms $a \rightarrowtail d$ in $\Hor\mathcal{D}$ and $d \twoheadrightarrow a'$ in $\Ver\mathcal{D}$ with $a, a' \in A.$  An \emph{augmented double category} is a double category equipped with an augmentation, and a double functor between augmented double categories is \emph{augmented} if it preserves the augmentation.
\end{definition}

Observe that if an augmented double category has its augmentation consisting of a single point, then we recover the definition of a pointed double category.

The next definition is meant to mimic the notion of bicartesian squares, which are both pullback and pushout squares in an ordinary category, to the setting of double categories, in which horizontal and vertical morphisms need not live in a common ambient category.

\begin{definition}
    A double category is \emph{stable} if every square is uniquely determined by its span of source morphisms and, independently, by its cospan of target morphisms. That is, the maps 
    \begin{align*}
        (s_h, s_v) \colon \sq(\mathcal{D}) \to \ver(\mathcal{D}) \times_{\ob(\mathcal{D})} \hor(\mathcal{D})  \\
        (t_h, t_v) \colon \sq(\mathcal{D}) \to \ver(\mathcal{D}) \times_{\ob(\mathcal{D})} \hor(\mathcal{D})
    \end{align*}
    given by 
    \[\adjustbox{scale=.8,center}{%
	\begin{tikzcd}
		\bullet \arrow[rr, "s_v", tail] \arrow[dd, "s_h"', two heads] && \bullet \arrow[dd, "t_h"', two heads] & \bullet \arrow[dd, two heads] \arrow[rr, tail] && \bullet && \bullet \arrow[rr, "s_v", tail] \arrow[dd, "s_h"', two heads] && \bullet \arrow[dd, "t_h"'] &&& \bullet \arrow[dd, two heads] \\
		&& {} \arrow[r, maps to] & {} &&& \textup{\large and } &&& {} \arrow[r, maps to] & {} && \\
		\bullet \arrow[rr, "t_v", tail] && \bullet & \bullet &&&& \bullet \arrow[rr, "t_v", tail] && \bullet & \bullet \arrow[rr, maps to] && \bullet                      
	\end{tikzcd} }\]
    are required to be bijections.  A double functor between stable double categories is \emph{stable} if it preserves stability.
\end{definition}

Let $\mathcal{D}\Cat_{\aug}^{\st}$ denote the category of augmented stable double categories and augmented stable double functors between them.  Let $2\Seg$ denote the category of 2-Segal sets and simplicial maps between them.  The main result of \cite{2s} is that there is an equivalence of categories
\[ S_\bullet \colon \mathcal{D}\Cat_{\aug}^{\st} \leftrightarrows 2\Seg \colon \mathcal P. \]
The notation $S_\bullet$ is suggestive of the fact that this functor generalizes the $S_\bullet$-construction in algebraic $K$-theory as first defined by Waldhausen \cite{wald}. Here, we are more interested in the functor $\mathcal P$, since we want to start with known examples of 2-Segal sets and produce augmented stable double categories from them.

Roughly speaking, the double category $\mathcal{P}X$ has as its set of objects $X_1$, and both the horizontal and vertical morphisms are given by the set $X_2$, while the set of squares is $X_3$. Although the horizontal and vertical morphisms are both given by the same set abstractly, they do not have the same behaviour.  Given $x \in X_2$, the corresponding  vertical morphism in $\mathcal P(X)$ corresponds to the map $d_1(x) \rightarrow d_0(x)$, while the corresponding horizontal morphism corresponds to the map $d_2(x)\rightarrow d_1(x)$.  The squares can be described analogously.  Furthermore, the inclusion $s_0X_0\subseteq X_1$ defines an augmentation for $\mathcal{P}X$. We refer the reader to \cite{2s} for more details, including the proof that $\mathcal PX$ is stable.
  
Recall that a 2-Segal set $X$ is \emph{reduced} if the set $X_0$ consists of a single point. Observe that, by the above equivalence, reduced 2-Segal sets correspond exactly to pointed stable double categories.  In particular, if $X^T$ is the 2-Segal set associated to a rooted tree $T$, then $\mathcal PX^T$ is a pointed stable double category.
 
If we consider the 2-Segal set $X^T$ associated to a tree $T$, the set of objects of $\mathcal{P}X^T$ are all subforests of $T$ obtainable by a sequence of admissible cuts, which by Proposition \ref{all} includes all subtrees of $T$.  Both the horizontal and the vertical morphisms are given by rooted subforests of $T$ together with one admissible cut, and the squares correspond to the rooted subforests of $T$ with two admissible cuts.

Denote by $Y_n$ the tree that has one vertex of degree $n$ and $n$ vertices of degree $1$, one of which is the root.  For specificity, let us consider the tree $T=Y_3$ as a nonplanar tree without labelling.  Then $\mathcal{P}X^T$ has seven objects: $T$, $K_1$, $2K_1$, $K_2$, $P_3^1$, $P_3^2$ and the empty tree, where $K_i$ is the complete graph on $i$ vertices, $P_3^i$ is the path on three vertices having as root a vertex of degree $i$ for $i=1,2$, and $2K_1$ denotes the forest on two vertices and no edges. To visualize the morphisms, assign to each element in $X_2$, which corresponds to a subtree with a nontrivial cut, a different colour. Then on the left of Figure \ref{color}, 
\begin{figure}[ht]
	\centering
	\begin{tikzpicture}
    \node[draw, circle, scale=3] (01) at (4.8, 1.5){};
	\node[draw, circle, scale=.3, fill=black] (1) at ( 4.8, 1.5) {};
  
    \node[draw, circle, scale=3] (0) at (1.3, 1.4){};
	\node[draw, circle, scale=.3, fill=black] (30) at ( 1.3, 1.7) {};
	\node[draw, circle, scale=.3, fill=black] (20) at ( 1.3, 1.2) {};

    \node[draw, circle, scale=3] (23) at (0.5, 4.5){};
	\node[draw, circle, scale=.3] (3) at ( 0.5, 4.75) {};
    \node[draw, circle, scale=.3, fill=black] (2) at ( 0.5, 4.25) {}; 

    \node[draw, circle, scale=3] (456) at (6.5, 4.5){};
	\node[draw, circle, scale=.3] (4) at ( 6.2, 4.5) {};
	\node[draw, circle, scale=.3, fill=black] (5) at ( 6.5, 4.2) {};
	\node[draw, circle, scale=.3] (6) at ( 6.8, 4.5) {};
  
	\node[draw, circle, scale=.3] (7) at ( 1.5, 7.5) {};
	\node[draw, circle, scale=.3] (8) at ( 1.8, 7.8) {};
	\node[draw, circle, scale=.3, fill=black] (9) at ( 1.5, 7.1) {};
    \node[draw, circle, scale=3] (78) at (1.5, 7.5){};
  
	\node[draw, circle, scale=.3] (10) at ( 4.7, 7.8) {};
	\node[draw, circle, scale=.3] (11) at ( 5, 7.5) {};
	\node[draw, circle, scale=.3] (12) at ( 5.3, 7.8) {};
	\node[draw, circle, scale=.3, fill=black] (13) at ( 5, 7.1) {};
    \node[draw, circle, scale=3] (T) at (5,7.5){};
		\draw (3) -- (2);
		\draw (7) -- (8);
		\draw (9) -- (7);
		\draw (11) -- (10);
		\draw (12) -- (11);
		\draw (13) -- (11);
		\draw (5) -- (4);
		\draw (6) -- (5);
        \draw[orange, thick, ->>](23) to (01);
        \draw[orange,bend left, thick, >->](01) to (23);
        \draw[violet, bend left, thick, ->>](456) to (01);
        \draw[violet, thick, >->](0) to (456);
        \draw[blue, bend left, thick, ->>](78) to (01);
        \draw[blue, thick, >->](23) to (78);
        \draw[red, thick, ->>](T) to (78);
        \draw[red, thick, >->](01) to (T);
        \draw[magenta,bend right, thick, ->>](T) to (01);
        \draw[magenta, thick, >->](456) to (T);
        \draw[thick, ->>] (T) to (23);
        \draw[thick, >->] (0) to (T);
        \draw[green, bend right, thick, ->>](78) to (23);
        \draw[green, thick, >->](01) to (78);
        \draw[cyan,  thick, ->>](456) to (23);
        \draw[cyan, thick, >->](01) to (456);
        \draw[olive, thick, >->, bend left](01) to (0);
        \draw[olive, thick, ->>, bend left](0) to (01);
    \node (V) at (8,5){$P_3^2$};
    \node (Y) at (10,5){$T$};
    \node (I) at (8,3){$K_2$};
    \node (0) at (10,2){$K_1$};
    \node (l) at (12,2){$K_2$};
    \node (L) at (10,3){$P_3^1$};
        \draw[magenta, >->] (V) to (Y);
        \draw[cyan, ->>] (V) to (I);
        \draw[blue, >->] (I) to (L);
        \draw[red, ->>] (Y) to (L);
        \draw[orange, ->>] (I) to (0);
        \draw[orange, >->] (0) to (l);
        \draw[green, ->>] (L) to (l);
        \draw[violet,->>] (V) to (0);
        \draw[->>] (Y) to (l);
    \end{tikzpicture}
	\caption{The objects, morphisms and three squares of $\mathcal{P}X^T$. }
	\label{color}
\end{figure}
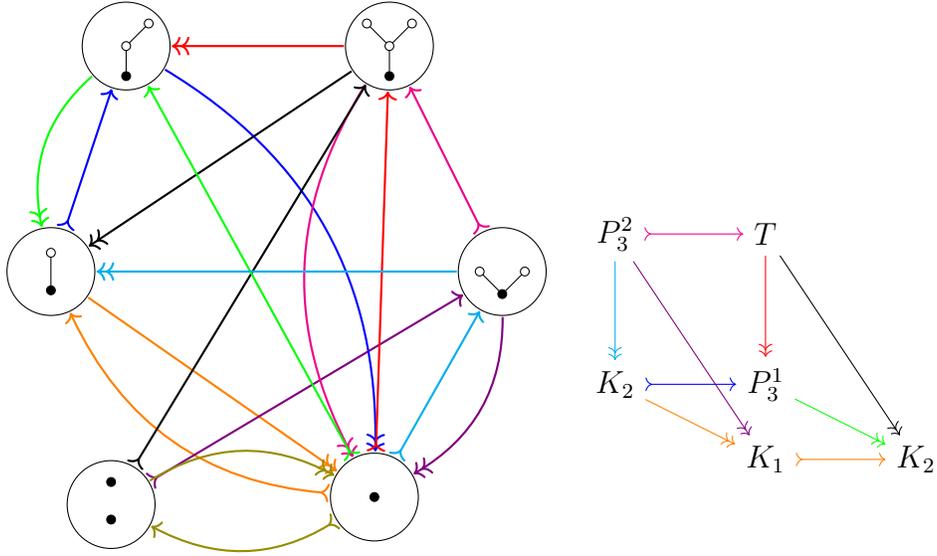
we can see the corresponding morphisms in $\mathcal{P}X^T$; the root of each subtree is the black vertex.  Each element in $X^T_2$ corresponds to one vertical and one horizontal morphism. Both trivial cuts give us the vertical and horizontal identity morphisms, and vertical and horizontal morphisms to the empty tree on each object. 

We have three squares corresponding to elements in $X_3$ which are subtrees with two nontrivial cuts, which we can see on the right-hand side of Figure \ref{color}. Every other square corresponds to a tree with either one or two trivial cuts.

If we instead regard the tree $T$ as planar, then $\mathcal{P}X^T$ has eight objects and twelve non-identity horizontal and vertical morphisms, together with the morphisms to and from the empty tree. Moreover, there are four squares that correspond to subtrees with two nontrivial cuts. Thus, in this case, the associated double category is slightly larger than in the nonplanar case. To visualize it, one can take the picture in Figure \ref{color} and duplicate the object corresponding to $P_2^2$ together with the three vertical and two horizontal morphisms adjacent to that object. 

Finally, let us assign a labelling to the tree $T$ and consider the associated double category.  It has thirteen objects, fourteen nontrivial horizontal and vertical morphisms, and seven squares corresponding to subtrees with two nontrivial cuts. In all cases, we have more squares that correspond to subtrees with one trivial cut and one nontrivial cut or two trivial cuts.

So, assume that $T$ has vertices labelled by $1,2,3,4$, with root labelled 1, as follows:
\[ \begin{tikzpicture} 
    \node (x) at (-0.3,0.5){};
    \node[draw, circle, scale=.35] (0) at (0,0){1};
    \node[draw, circle, scale=.35] (1) at (0,.7){2};
    \node[draw, circle, scale=.35] (2) at (0.5,1.2){4};
    \node[draw, circle, scale=.35] (3) at (-0.5,1.2){3};
    \draw (1)--(3);
    \draw (1)--(2);
    \draw (0)--(1);
\end{tikzpicture}. \] 
We can denote each object by the labels of its vertices as shown in Figure \ref{oblabel}.
\begin{figure}[ht]
    \centering
    \begin{tabular}{ccccccccccccc}
    $\varnothing$ & $ \bullet_1$ & $\bullet_2$ & $\bullet_3$ & $\bullet_4$ & \begin{tikzpicture}
        \node (x) at (-0.3,0.5){{\footnotesize{$123$}}};
        \node[draw, circle, scale=.35, fill=black] (0) at (0,0){};
        \node[draw, circle, scale=.35
        ] (1) at (0,.7){};
        \node[draw, circle, scale=.35
        ] (2) at (-0.5,1.2){};
        \draw (1)--(2);
        \draw (0)--(1);
    \end{tikzpicture}
    & 
    \begin{tikzpicture}
        \node (x) at (-0.3,0.5){{\footnotesize{$124$}}};
        \node[draw, circle, scale=.35, fill=black] (0) at (0,0){};
        \node[draw, circle, scale=.35
        ] (1) at (0,.7){};
        \node[draw, circle, scale=.35
        ] (2) at (0.5,1.2){};
        \draw (1)--(2);
        \draw (0)--(1);
    \end{tikzpicture}
    & 
    \begin{tikzpicture}
        \node (x) at (0,1.2){{\footnotesize{$24$}}};
        \node[draw, circle, scale=.35, fill=black] (1) at (0,.7){};
        \node[draw, circle, scale=.35
        ] (2) at (0.5,1.4){};
        \draw (2)--(1);
    \end{tikzpicture}
    &
    \begin{tikzpicture}
        \node (x) at (0,1.2){{\footnotesize{$23$}}};
        \node[draw, circle, scale=.35, fill=black] (1) at (0,.7){};
        \node[draw, circle, scale=.35
        ] (2) at (-0.5,1.4){};
        \draw (2)--(1);
    \end{tikzpicture}
    &
    \begin{tikzpicture}
        \node (x) at (-0.3,0.5){{\footnotesize{$12$}}};
        \node[draw, circle, scale=.35, fill=black] (0) at (0,0){};
        \node[draw, circle, scale=.35
        ] (1) at (0,.7){};
        \draw (0)--(1);
    \end{tikzpicture}
    & 
    \begin{tikzpicture}
        \node (x) at (0,1.2){{\footnotesize{$234$}}};
        \node[draw, circle, scale=.35, fill=black] (1) at (0,.7){};
        \node[draw, circle, scale=.35
        ] (2) at (-0.5,1.4){}; 
        \node[draw, circle, scale=.35
        ] (3) at (0.5,1.4){};
        \draw (1)--(2);
        \draw (1)--(3);
    \end{tikzpicture}
    &
    $\bullet \bullet_{34}$ & $T$\\
    \end{tabular}
    \caption{The thirteen objects in $\mathcal{P}X^T$.} \label{oblabel}
\end{figure}
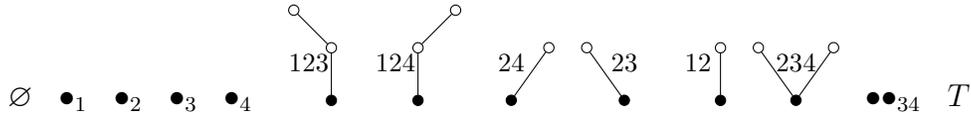

Then the morphisms corresponding to nontrivial cuts are shown in Figure \ref{14m}, 
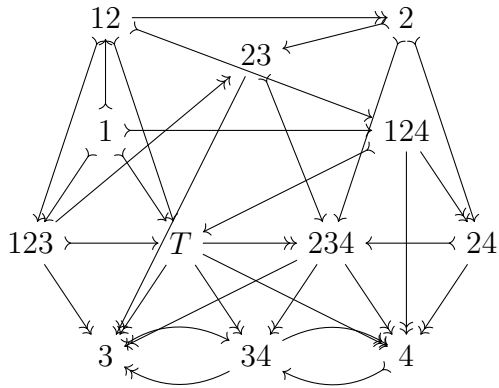
\begin{figure}
    \centering
    \begin{tikzpicture}
        \node (3) at (0,-0.5){$3$};
        \node (34) at (2,-0.5){$34$};
        \node (4) at (4,-0.5){$4$};
        \node (123) at (-1,1){$123$};
        \node (t) at (1,1){$T$};
        \node (234) at (3,1){$234$};
        \node (24) at (5,1){$24$};
        \node (1) at (0,2.5){$1$};
        \node (124) at (4,2.5){$124$};
        \node (12) at (0,4){$12$};
        \node (23) at (2,3.5){$23$};
        \node (2) at (4,4){$2$};
        \draw[->>] (24)--(4);
        \draw[->>] (23)--(3);
        \draw[->>] (124)--(4);
        \draw[->>] (124)--(24);
        \draw[->>] (12)--(2);
        \draw[->>] (123)--(23);
        \draw[->>] (123)--(3);
        \draw[->>] (t)--(234);
        \draw[->>] (t)--(3);
        \draw[->>] (t)--(4);
        \draw[->>] (t)--(34);
        \draw[->>] (234)--(4);
        \draw[->>] (234)--(3);
        \draw[->>] (234)--(34);

        \draw[>->, bend left] (3) to (34);
        \draw[>->, bend left] (4) to (34);
        \draw[->>, bend left] (34) to (3);
        \draw[->>, bend left] (34) to (4);
        \draw[>->] (1)--(12);
        \draw[>->] (1)--(123);
        \draw[>->] (1)--(124);
        \draw[>->] (1)--(t);
        \draw[>->] (12)--(123);
        \draw[>->] (12)--(124);
        \draw[>->] (12)--(t);
        \draw[>->] (2)--(234);
        \draw[>->] (2)--(23);
        \draw[>->] (2)--(24);
        \draw[>->] (24)--(234);
        \draw[>->] (23)--(234);
        \draw[>->] (123)--(t);
        \draw[>->] (124)--(t);
    \end{tikzpicture}
    \caption{The fourteen morphisms of $\mathcal{P}X^T$ that correspond to a subtree with a nonempty cut.} \label{14m}
\end{figure}
and the seven squares corresponding to two nontrivial cuts are shown in Figure \ref{7sq}.
\begin{figure}[ht]
    \centering
    \begin{tikzpicture}
    \node (23) at (0,1){$23$};
    \node (13) at (1.5,1){$123$};
    \node (02) at (0,0){$2$};
    \node (12) at (1.5,0){$12$};
    \draw[>->] (23)--(13);
    \draw[->>] (23)--(02);
    \draw[->>] (13)--(12);
    \draw[>->] (02)--(12);
    \node (231) at (3,1){$24$};
    \node (131) at (4.5,1){$124$};
    \node (021) at (3,0){$2$};
    \node (121) at (4.5,0){$12$};
    \draw[>->] (231)--(131);
    \draw[->>] (231)--(021);
    \draw[->>] (131)--(121);
    \draw[>->] (021)--(121);
    \node (232) at (6,1){$234$};
    \node (132) at (7.5,1){$T$};
    \node (022) at (6,0){$23$};
    \node (122) at (7.5,0){$123$};
    \draw[>->] (232)--(132);
    \draw[->>] (232)--(022);
    \draw[->>] (132)--(122);
    \draw[>->] (022)--(122);
    \node (233) at (9,1){$234$};
    \node (133) at (10.5,1){$T$};
    \node (023) at (9,0){$24$};
    \node (123) at (10.5,0){$124$};
    \draw[>->] (233)--(133);
    \draw[->>] (233)--(023);
    \draw[->>] (133)--(123);
    \draw[>->] (023)--(123);
    \node (23x) at (1,3){$123$};
    \node (13x) at (2.5,3){$T$};
    \node (02x) at (1,2){$12$};
    \node (12x) at (2.5,2){$124$};
    \draw[>->] (23x)--(13x);
    \draw[->>] (23x)--(02x);
    \draw[->>] (13x)--(12x);
    \draw[>->] (02x)--(12x);
    \node (231x) at (4,3){$23$};
    \node (131x) at (5.5,3){$234$};
    \node (021x) at (4,2){$23$};
    \node (121x) at (5.5,2){$123$};
    \draw[>->] (231x)--(131x);
    \draw[->>] (231x)--(021x);
    \draw[->>] (131x)--(121x);
    \draw[>->] (021x)--(121x);
    \node (232x) at (7,3){$234$};
    \node (132x) at (8.5,3){$T$};
    \node (022x) at (7,2){$2$};
    \node (122x) at (8.5,2){$12$};
    \draw[>->] (232x)--(132x);
    \draw[->>] (232x)--(022x);
    \draw[->>] (132x)--(122x);
    \draw[>->] (022x)--(122x);
    \end{tikzpicture}
    \caption{Squares from nontrivial cuts.} \label{7sq}
\end{figure}
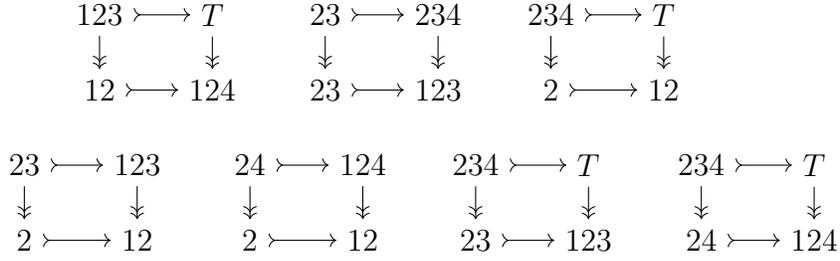 

Now, let us consider the family of what are sometimes called linear trees.  Let $TP_n$ denote the tree defined by the path graph on $n$ vertices having as a root a vertex of degree one:
\[ \begin{tikzpicture}
    \node[draw, circle, scale=.5, fill=black]  (0) at (0,0){};
    \node(r) at (0, -0.3){{}};
    \node[draw, circle, scale=.5]  (1) at (1,0){};
    \node[draw, circle, scale=.5]  (2) at (2,0){};
    \node[draw, circle, scale=.5]  (3) at (3,0){};
    \node[draw, circle, scale=.5]  (4) at (4,0){};
    \node[draw, circle, scale=.5]  (5) at (5,0){};\node[draw, circle, scale=.5]  (6) at (6,0){};
    \draw (0)--(1);
    \draw (1)--(2);
    \draw (2)--(3);
    \draw (3)--(4);
    \draw[dotted] (4)--(5);
    \draw (5)--(6);
\end{tikzpicture}.\]
Then the subtrees obtained by admissible cuts are the paths $P_k$ with $1 \leq k \leq n$, together with the empty tree. Thus we have an object of $\mathcal PX^{P_n}$ for every positive integer $k<n$ which, together with the empty tree, gives us $n+1$ objects. As we previously noticed, for every object $P_k$ there exists a vertical morphism from the empty tree to $P_k$ and from $P_k$ to the empty tree. Furthermore, we have a horizontal morphism from $P_i$ to $P_j$ if $i \leq j$ and a vertical morphism from $P_j$ to $P_i$ if $j \geq i$.

Finally, given integers $i,j,k$ such that $ i<j<k<n$, there exists a square with horizontal morphisms $k-i \rightarrowtail k$ and $j-i \rightarrowtail j$ and with vertical morphisms $k-1 \twoheadrightarrow j-i$ and $k \twoheadrightarrow j$, which means that there are $\binom{n}{3}$ squares arising from a subtree with two nontrivial cuts.

\begin{example}
    For example, consider the tree $P_4$. Its associated double category has five objects, thirty morphisms including identities, and four squares corresponding to subtrees with two nontrivial cuts.  A depiction is given in Figure \ref{P4}.
    \begin{figure}[h]
        \centering
       \begin{tikzpicture}
        \node (1) at (  0,0){1};
        \node (2) at (1.5,0){2};
        \node (3) at (  3,0){3};
        \node (4) at (4.5,0){4};
        \node (0) at (2.25, 3){$\varnothing$};
        \draw[>->, bend left=5] (0)to(2);
        \draw[>->, bend left=5] (0)to(3);
        \draw[>->, bend left=5] (0)to(4);
        \draw[>->, bend left=5] (0)to(1);
        \draw[->>, bend left=5] (1)to(0);
        \draw[->>, bend left=5] (2)to(0);
        \draw[->>, bend left=5] (3)to(0);
        \draw[->>, bend left=5] (4)to(0);
        \draw[>->, bend left] (1)to(2);
        \draw[>->, bend left] (2)to(3);
        \draw[>->, bend left] (3)to(4);
        \draw[>->, bend left] (1)to(3);
        \draw[>->, bend left] (2)to(4);
        \draw[>->, bend left] (1)to(4);
        \draw[->>, bend left] (2)to(1);
        \draw[->>, bend left] (3)to(2);
        \draw[->>, bend left] (4)to(3);
        \draw[->>, bend left] (3)to(1);
        \draw[->>, bend left] (4)to(2);
        \draw[->>, bend left] (4)to(1);
       \end{tikzpicture}
        \caption{The objects and arrows of the double category associated to the tree $P_4$.} \label{P4}
    \end{figure}
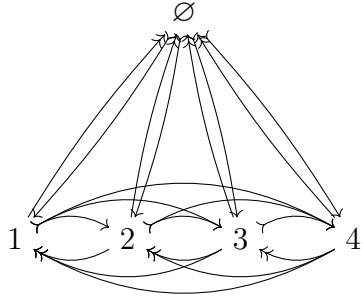
\end{example}

Now let $K_{1,n}$ denote the tree with $n$ vertices of degree one and one vertex of degree $n$, with the latter taken to be the root vertex. Then the subtrees obtainable by admissible cuts are the trees $K_{1,i}$ for $0 \leq i \leq n$, and the subforests we obtain are $F_i$ for $1 \leq i \leq n$, where $F_i$ denotes the forest on $i$ vertices and no edges. Notice that $F_1=K_{1,0}$, thus together with the empty tree, we have $2n$ objects in the associated double category. The horizontal morphisms are $F_i \rightarrowtail K_{1,j}$ and $F_i \rightarrowtail F_j$ for $0 \leq  i \leq j \leq n$, and the vertical morphisms are $K_{1,j} \twoheadrightarrow K_{1,i}$ and $F_j \twoheadrightarrow F_i$ for $0 \leq i \leq j \leq n$, together with the morphisms from and to the empty tree.  We thus have $2\binom{n+1}{2}+2n$ horizontal and vertical morphisms besides the identity morphisms. Finally, the squares are of the form 
 \[  \begin{tikzpicture}
    \centering
    \node (23) at (0,1){$F_j$};
    \node (13) at (1.8,1){$K_{1,k}$};
    \node (02) at (0,-0.5){$F_j$};
    \node (12) at (1.8,-0.5){$K_{k-i}$};
    \draw[>->] (23)--(13);
    \draw[->>] (23)--(02);
    \draw[->>] (13)--(12);
    \draw[>->] (02)--(12); 
\end{tikzpicture}  \begin{tikzpicture}
    \centering
    \node (23) at (0,1){$F_{k-i}$};
    \node (13) at (1.8,1){$F_{k}$};
    \node (02) at (0,-0.5){$F_{j-i}$};
    \node (12) at (1.8,-0.5){$F_{j}$};
    \draw[>->] (23)--(13);
    \draw[->>] (23)--(02);
    \draw[->>] (13)--(12);
    \draw[>->] (02)--(12);  \end{tikzpicture} \]
where the right square corresponds to the subtree $K_{1,k}$, where the first cut passes through $i$ edges and the second cut passes through $j$ edges. The left square corresponds to the forest $F_k$ with two cuts where one cut separates $i$ vertices, and the other cut separates $j$ vertices for $i<j$.

\begin{example}
    Consider the tree $K_{1,3}$ having as root the vertex of degree three. Then the associated double category has seven objects and fifteen nonidentity horizontal and vertical morphisms. In Figure \ref{dk13}, 
    \begin{figure}[h]
    \centering
    \begin{tikzpicture}
        \node (x3) at (-2,-0.5){$F_3$};
        \node (y3) at (0,1){$K_{1,3}$};
        \node (x1) at (1.5,2){$F_1$}; 
        \node (y1) at (1.5,3.7){$K_{1,1}$};
        \node (y2) at (3,1){$K_{1,2}$};
        \node (x2) at (2,-0.5){$F_2$};
        \draw[>->] (x3)to(y3);
        \draw[>->] (x2)to(y2);
        \draw[>->, bend left] (x2)to(y3);
        \draw[>->, bend right] (x1)to(y1);
        \draw[>->, bend left] (x1)to(y2);
        \draw[>->, bend left] (x1)to(y3);
        \draw[->>, bend right] (y3)to(y2);
        \draw[->>,bend left] (y3)to(y1);
        \draw[->>,bend right] (y2)to(y1);
        \draw[->>,bend right] (x1)to(y2);
        \draw[->>,bend left]  (x1)to(y1);
        \draw[->>,bend right] (x1)to(y3);
        
        \draw[->>,bend right] (x2)to(x1);
        \draw[>->,bend right] (x1)to(x2);
        \draw[>->,bend right] (x2)to(x3);
        \draw[->>,bend right] (x3)to(x1);
        \draw[>->,bend right] (x1)to(x3);
        \draw[->>,bend right] (x3)to(x2);
    \end{tikzpicture}
    \caption{The objects and morphisms corresponding to nonempty subforests in the double category associated to the tree $K_{1,3}$.}
    \label{dk13}
    \end{figure}
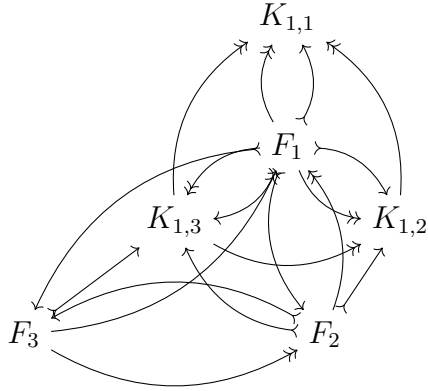
    we see every object corresponding to a nonempty subtree and the morphisms between them. Finally, there are five squares that correspond to subtrees with two nontrivial cuts.
\end{example}

Finally, we consider the trees $Z_{m,n}$ obtained by subdividing one edge of the tree $K_{1,m+1}$ into $n$ edges. Alternatively, it can be obtained by identifying the root vertices of $P_{n+1}$, with the root a vertex of degree one, and $K_{1,m}$, with root the vertex of degree $m$: 
\[ \begin{tikzpicture}
        \node[draw, circle, scale=.4,fill] (0) at (0,0) {};
		\node[draw, circle, scale=.4] (1) at (-0.5,1) {};
        \node at (-0.5, 1.3){\footnotesize{$1$}};
		\node[draw, circle, scale=.4] (2) at (0,1.2) {};
		\node at (0, 1.5){\footnotesize{$2$}};
		\node[draw, circle, scale=.4] (4) at (0.7,0.5) {};
        \node at (1, 0.5){$m$};
        \node[draw, circle, scale=.4] (5) at (-0.7,0.5) {};
        \node at (-0.7, 0.2){\footnotesize{$1$}};
		\node[draw, circle, scale=.4] (7) at (-1.4, 1) {};
        \node at (-1.4, 0.7){\footnotesize{$2$}};
		\node[draw, circle, scale=.4] (6) at (-2, 1.5) {};
        \node[draw, circle, scale=.4] (8) at (-2.7, 2){};
        \node at (-3, 2){$n$};
        \node (t) at (0,-0.5){$Z_{m,n}$.};

        \draw (5)--(7);
        \draw[dotted] (6)--(7);
        \draw (0)--(5);
		\draw (0) -- (2);
		\draw[bend left, dotted] (2)to(4);
		\draw (0) -- (4);
		\draw (1) -- (0);
        \draw (6)--(8);
        \end{tikzpicture}\]
Then the nonempty forests we obtain by taking cuts are:
\begin{itemize}
    \item $K_{1,\ell}$ for $2 \leq \ell \leq m+1$;
    
    \item the forest $F_i$ on $i$ vertices and no edges for $1 \leq i \leq m+1$;
    
    \item the path tree $P_j$ for $2 \leq j \leq n$;
    
    \item the disjoint union 
    $W_{i,j}:= F_i \union P_j$ for $1 \leq i \leq m$ and $2 \leq j \leq n-1$; and
    
    \item the subtrees $Z_{i,j}$ for $1 \leq i \leq m $ and $2\leq j \leq n$.
\end{itemize}
See Figure \ref{obZ} for examples.
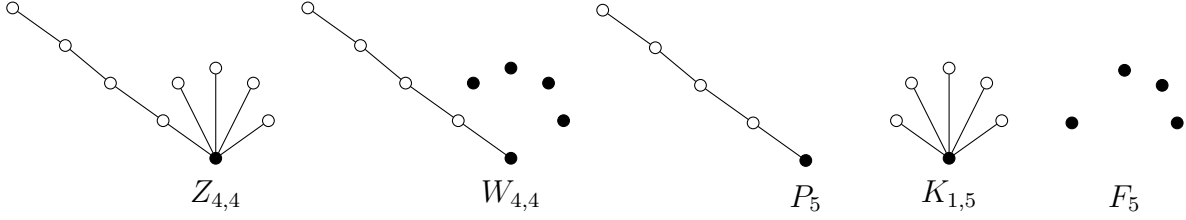
\begin{figure}[h]
        \centering
        \begin{tabular}{ccccc}
         \begin{tikzpicture}
        \node[draw, circle, scale=.4,fill] (0) at (0,0) {};
		\node[draw, circle, scale=.4] (1) at (-0.5,1) {};
		\node[draw, circle, scale=.4] (2) at (0,1.2) {};
		\node[draw, circle, scale=.4] (3) at (0.5,1) {};
		\node[draw, circle, scale=.4] (4) at (0.7,0.5) {};
        \node[draw, circle, scale=.4] (5) at (-0.7,0.5) {};
		\node[draw, circle, scale=.4] (7) at (-1.4, 1) {};
		\node[draw, circle, scale=.4] (6) at (-2, 1.5) {};
        \node[draw, circle, scale=.4] (8) at (-2.7, 2){};
        \node (t) at (0,-0.5){$Z_{4,4}$};

        \draw (5)--(7);
        \draw (6)--(7);
        \draw (0)--(5);
		\draw (0) -- (2);
		\draw (0) -- (3);
		\draw (0) -- (4);
		\draw (1) -- (0);
        \draw (6)--(8);
        \end{tikzpicture}
        &
        \begin{tikzpicture}
        \node[draw, circle, scale=.4,fill] (0) at (0,0) {};
		\node[draw, circle, scale=.4,fill] (1) at (-0.5,1) {};
		\node[draw, circle, scale=.4,fill] (2) at (0,1.2) {};
		\node[draw, circle, scale=.4,fill] (3) at (0.5,1) {};
		\node[draw, circle, scale=.4,fill] (4) at (0.7,0.5) {};
        \node[draw, circle, scale=.4] (5) at (-0.7,0.5) {};
		\node[draw, circle, scale=.4] (7) at (-1.4, 1) {};
		\node[draw, circle, scale=.4] (6) at (-2, 1.5) {};
        \node[draw, circle, scale=.4] (8) at (-2.7, 2){};
        \node (t) at (0,-0.5){$W_{4,4}$};

        \draw (5)--(7);
        \draw (0)--(5);
		\draw (6)--(7);
        \draw (6)--(8);
        \end{tikzpicture}
        &
        \begin{tikzpicture}
        \node[draw, circle, scale=.4,fill] (0) at (0,0) {};
        \node[draw, circle, scale=.4] (5) at (-0.7,0.5) {};
		\node[draw, circle, scale=.4] (7) at (-1.4, 1) {};
		\node[draw, circle, scale=.4] (6) at (-2, 1.5) {};
        \node[draw, circle, scale=.4] (8) at (-2.7, 2){};
        \node (t) at (0,-0.5){$P_{5}$};
        \draw(0)--(5);
        \draw (5)--(7);
        \draw (6)--(7);
        \draw (6)--(8);
        \end{tikzpicture}
         & \quad
        \begin{tikzpicture}
        \node[draw, circle, scale=.4,fill] (0) at (0,0) {};
		\node[draw, circle, scale=.4] (1) at (-0.5,1) {};
		\node[draw, circle, scale=.4] (2) at (0,1.2) {};
		\node[draw, circle, scale=.4] (3) at (0.5,1) {};
		\node[draw, circle, scale=.4] (4) at (0.7,0.5) {};
        \node[draw, circle, scale=.4] (5) at (-0.7,0.5) {};
        \node (t) at (0,-0.5){$K_{1,5}$};

        \draw (0)--(5);
		\draw (0) -- (2);
		\draw (0) -- (3);
		\draw (0) -- (4);
		\draw (1) -- (0);
        \end{tikzpicture}
         &\quad
        \begin{tikzpicture}
		\node[draw, circle, scale=.4,fill] (2) at (0,1.2) {};
		\node[draw, circle, scale=.4,fill] (3) at (0.5,1) {};
		\node[draw, circle, scale=.4,fill] (4) at (0.7,0.5) {};
        \node[draw, circle, scale=.4,fill] (5) at (-0.7,0.5) {};
        \node (t) at (0,-0.5){$F_{5}$};
        \end{tikzpicture}
        \end{tabular}
       \caption{The trees and forests $Z_{4,4}, W_{4,4}, P_5, K_{1,5}$ and $F_5.$} \label{obZ}
\end{figure}

Notice that we can write $F_1 = K_{1,0}=P_1= W_{0,1} = W_{1,0} = Z_{0,1}$; while we are not considering them as distinct objects, this observation is important for describing the morphisms. Furthermore, $W_{1,1}=F_2,$ $K_{1,1}=P_2$ and while $K_{1,2}$ is the same tree as $P_3$, they are considered to be different since they have a different root; $P_i$ always has as a root a vertex of degree one and $K_{1,\ell}$ a vertex of degree $\ell$. Finally we also have $Z_{i,0}=K_{1,i}, Z_{0,i}=P_i=W_{0,i}, Z_{i,1} = K_{1,i+1}$ and $W_{i,0}=F_i$.

We know from the previously considered families of trees that we have morphisms:
\begin{itemize}
    \item $P_i \rightarrowtail P_j$ and $P_j \twoheadrightarrow P_i$ for $i \leq j$;
    
    \item $W_{i,j} \rightarrowtail W_{i,k}$ and $W_{i,k} \twoheadrightarrow W_{i,j}$ for $2 \leq j \leq k$; 
    
    \item $F_i \rightarrowtail K_{1,j}$ and $K_{1,j} \twoheadrightarrow K_{1,i} $ for $i \leq j$; and 
    
    \item $F_i \rightarrowtail F_j$ and $F_j \twoheadrightarrow F_i$ for every $i,j$.
\end{itemize}
Thus, it suffices to analyse morphisms associated to the subtree $Z_{k,\ell}$ with a nontrivial admissible cut for $1\leq k \leq m$ and $2 \leq \ell \leq n$. To do so, denote by $e_j$ the admissible cut that passes through the $j$th edge from the root of $P_\ell \subseteq Z_{k,\ell}$ for $1 \leq j \leq \ell$. Denote by $g_i$ the cut that passes through $i$ edges of $K_{1,k} \subseteq Z_{k,\ell}$ for $1 \leq k \leq \ell$, and denote by $g_ie_j$ the cut that is the union of the cuts $g_i$ and $e_j$; see, for example, Figure \ref{gecuts}.
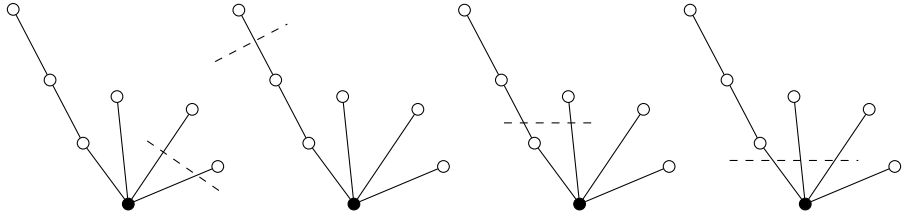
\begin{figure}[ht]
    \centering
    \begin{tikzpicture}[rotate = 90]
        \node[draw, circle, scale=.4,fill] (010) at ( 2.42, 0.85-4) {};
		\node[draw, circle, scale=.4] (020) at ( 2.92, 1.66-6) {};
		\node[draw, circle, scale=.4] (030) at ( 3.68, -4) {};
		\node[draw, circle, scale=.4] (040) at ( 3.85, -3) {};
		\node[draw, circle, scale=.4] (050) at ( 3.23, 0.45-3) {};
		\node[draw, circle, scale=.4] (060) at ( 4.07, 0.89-3) {};
		\node[draw, circle, scale=.4] (070) at ( 5.00, 1.38-3) {};
        \node (080) at (3, -4){};
        \node (090) at (3, -2){};
       \draw[-, dashed] (090)--(080);

		\draw (010) -- (020);
		\draw (010) -- (030);
		\draw (010) -- (040);
		\draw (010) -- (050);
		\draw (060) -- (050);
		\draw (070) -- (060);
    
        \node[draw, circle, scale=.4,fill] (01) at ( 2.42, 0.85-1) {};
		\node[draw, circle, scale=.4] (02) at ( 2.92, 1.66-3) {};
		\node[draw, circle, scale=.4] (03) at ( 3.68, -1) {};
		\node[draw, circle, scale=.4] (04) at ( 3.85, 0) {};
		\node[draw, circle, scale=.4] (05) at ( 3.23, 0.45) {};
		\node[draw, circle, scale=.4] (06) at ( 4.07, 0.89) {};
		\node[draw, circle, scale=.4] (07) at ( 5.00, 1.38) {};
        \node (80) at (3.5, -.5){};
        \node (90) at (3.5, 1){};
        \draw[-, dashed] (90)--(80);

		\draw (01) -- (02);
		\draw (01) -- (03);
		\draw (01) -- (04);
		\draw (01) -- (05);
		\draw (06) -- (05);
		\draw (07) -- (06);
    
        \node[draw, circle, scale=.4,fill] (1) at ( 2.42, 2.85) {};
		\node[draw, circle, scale=.4] (2) at ( 2.92, 1.66) {};
		\node[draw, circle, scale=.4] (3) at ( 3.68, 2) {};
		\node[draw, circle, scale=.4] (4) at ( 3.85, 3) {};
		\node[draw, circle, scale=.4] (5) at ( 3.23, 3.45) {};
		\node[draw, circle, scale=.4] (6) at ( 4.07, 3.89) {};
		\node[draw, circle, scale=.4] (7) at ( 5.00, 4.38) {};
		\node (8) at ( 4.89, 3.59) {};
		\node (9) at ( 4.24, 4.84) {};

		\draw (1) -- (2);
		\draw (1) -- (3);
		\draw (1) -- (4);
		\draw (1) -- (5);
		\draw (6) -- (5);
		\draw (7) -- (6);
		\draw[-, dashed] (9) -- (8);

        \node[draw, circle, scale=.4,fill] (10) at ( 2.42, 5.85) {};
		\node[draw, circle, scale=.4] (20) at ( 2.92, 4.66) {};
        \node (34) at (3.4, 5.8){};
        \node (00) at (2.5, 4.5){};
        \draw[-, dashed] (00)--(34);
		\node[draw, circle, scale=.4] (30) at ( 3.68, 5) {};
		\node[draw, circle, scale=.4] (40) at ( 3.85, 6) {};
		\node[draw, circle, scale=.4] (50) at ( 3.23, 6.45) {};
		\node[draw, circle, scale=.4] (60) at ( 4.07, 6.89) {};
		\node[draw, circle, scale=.4] (70) at ( 5.01, 7.38) {};

		\draw (10) -- (20);
		\draw (10) -- (30);
		\draw (10) -- (40);
		\draw (10) -- (50);
		\draw (60) -- (50);
		\draw (70) -- (60);
    \end{tikzpicture}
    \caption{The tree $Z_{3,3}$ with the cuts $g_2, e_3, g_1e_2$ and $g_2e_1 $ respectively. }
    \label{gecuts}
\end{figure}
Then:
\begin{itemize}
    \item from ($Z_{k,\ell}, e_j)$ we obtain morphisms $Z_{k,(j-1)}\rightarrowtail Z_{k,\ell}$ and $Z_{k,\ell}\twoheadrightarrow P_{\ell-j}$;
    
    \item from $(Z_{k,\ell},g_i)$ we obtain morphisms $Z_{(k-i),\ell} \rightarrowtail Z_{k,\ell}$ and $Z_{k,\ell}\twoheadrightarrow  F_i$; and 
    
    \item from $(Z_{k,\ell},g_ie_j)$ we obtain morphisms $Z_{(k-i),(j-1)} \rightarrowtail Z_{k,\ell}$ and $Z_{k,\ell}\twoheadrightarrow  W_{i,\ell-j}$.
\end{itemize}
For the squares, consider the tree $Z_{m,n}$ with the cuts $g_ie_j$ and $g_\ell e_k$ where $i + \ell \leq m$ and $k<j\leq n$. Then the corresponding square is 
\[ \begin{tikzpicture}
    \centering
    \node (23) at (0,1.5){$W_\ell$};
    \node (13) at (3,1.5){$Z_{m,n}$};
    \node (02) at (0,0){$W_{\ell, j-k}$};
    \node (12) at (3,0){$Z_{m-i,j-1}.$};
    \draw[>->] (23)--(13);
    \draw[->>] (23)--(02);
    \draw[->>] (13)--(12);
    \draw[>->] (02)--(12); 
\end{tikzpicture} \]

\begin{example}
    Consider the tree $Z_{2,2}$. Then in Figure \ref{z22im} we can see the objects of the associated double category that correspond to nonempty subtrees and forests together with the morphisms between them. The pink arrows are horizontal morphisms and the black arrows are vertical morphisms.
    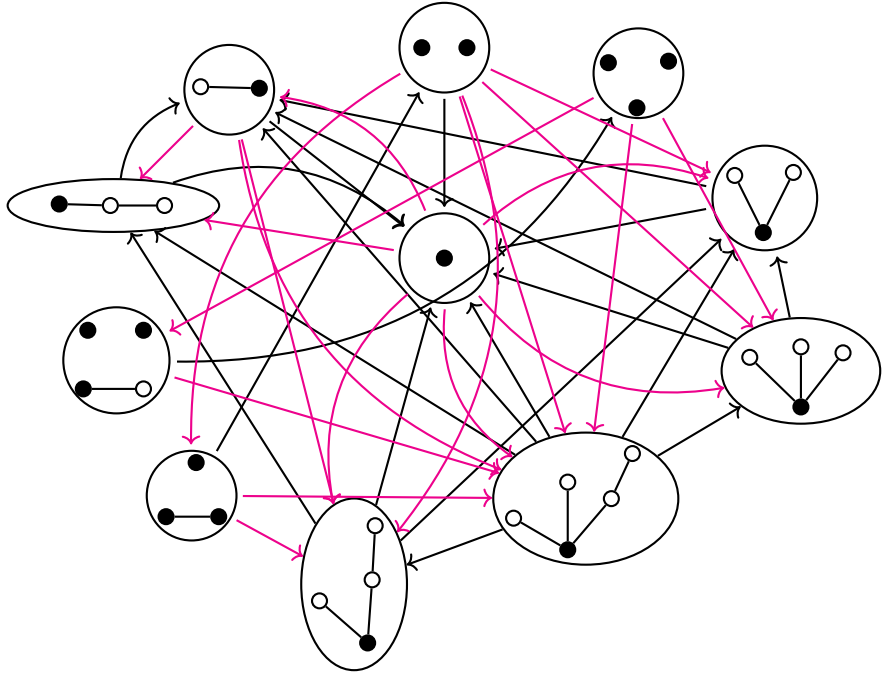
\begin{figure}
        \centering
        \begin{tikzpicture}[scale=2,thick]
		\tikzstyle{every node}=[minimum width=0pt, inner sep=2pt, circle]
			\draw (-1.1,2.7) node[draw, fill=black] (0) {};
			\draw (-0.8,2.7) node[draw, fill=black] (1) {};
			\draw (-0.95,2.7) node[draw, scale=6] (2) {};
			\draw (-2.57,2.44) node[draw] (3) { };
			\draw (-2.18,2.43) node[draw, fill=black] (4) {};
			\draw (-2.38,2.42) node[draw, scale=6] (5) { };
			\draw (0.14,2.6) node[draw, fill=black] (6) {};
			\draw (0.54,2.61) node[draw, fill=black] (7) { };
			\draw (0.33,2.3) node[draw, fill=black] (8)  {};
			\draw (0.34,2.53) node[draw, scale=6] (9) { };
			\draw (0.98,1.85) node[draw] (10) {};
			\draw (1.37,1.87) node[draw] (11) {};
			\draw (1.17,1.47) node[draw, fill=black] (12) { };
			\draw (1.18,1.7) node[draw, scale=7] (13) {};
			\draw (-3.51,1.66) node[draw, fill=black] (14) {};
			\draw (-3.17,1.65) node[draw] (15) {  };
			\draw (-2.81,1.65) node[draw] (16) {  };
			\draw (-3.15,1.65) node[draw, ellipse, minimum width=80pt, minimum height=20pt] (17) {  };
			\draw (-0.95,1.3) node[draw, scale=6] (18) { };
			\draw (-0.95,1.3) node[draw, fill=black] (19) { };
			\draw (-3.32,0.82) node[draw, fill=black] (20) { };
            \draw (-2.95,0.82) node[draw, fill=black] (21) { };
			\draw (-3.35,0.43) node[draw, fill=black] (22) { };
			\draw (-2.95,0.43) node[draw] (23) { };
			\draw (-3.13,0.62) node[draw, scale=7.1] (24) { };
			\draw (-2.6,-0.06) node[draw, fill=black] (25) { };
			\draw (-2.8,-0.42) node[draw, fill=black] (26) { };
			\draw (-2.45,-0.42) node[draw, fill=black] (27) { };
			\draw (-2.63,-0.28) node[draw, scale=6] (28){};
			\draw (1.08,0.64) node[draw] (29) {};
			\draw (1.42,0.71) node[draw] (30) {};
			\draw (1.7,0.67) node[draw] (31) {};
			\draw (1.42,0.31) node[draw, fill=black] (32) {};
			\draw (1.42,0.55) node[draw, ellipse, minimum width=60pt, minimum height=40pt] (33) { };
			\draw (-1.78,-0.98) node[draw] (34) {};
			\draw (-1.46,-1.26) node[draw, fill=black] (35) {};
			\draw (-1.43,-0.84) node[draw] (36) { };
			\draw (-1.41,-0.48) node[draw] (37) {  };
			\draw (-1.55,-0.87) node[draw,ellipse, minimum width=40pt, minimum height=65pt] (38) {  };
			\draw (-0.49,-0.43) node[draw] (39) {  };
			\draw (-0.13,-0.64) node[draw, fill=black] (40) {};
			\draw (-0.13,-0.19) node[draw] (41) { };
			\draw (0.16,-0.3) node[draw] (42) { };
			\draw (0.3,0) node[draw] (43) { };
			\draw (-0.01,-0.3) node[draw, ellipse, minimum width=70pt, minimum height=50pt] (44) {};
			\draw  (3) edge (4);
			\draw  (10) edge (12);
			\draw  (11) edge (12);
			\draw  (14) edge (15);
			\draw  (15) edge (16);
			\draw  (22) edge (23);
			\draw  (26) edge (27);
			\draw  (29) edge (32);
			\draw  (30) edge (32);
			\draw  (31) edge (32);
			\draw  (35) edge (36);
			\draw  (36) edge (37);
			\draw  (34) edge (35);
			\draw  (40) edge (41);
			\draw  (39) edge (40);
			\draw  (40) edge (42);
			\draw  (42) edge (43);
            \draw [->] (44) edge (38);
            \draw [->] (44) edge (18);
            \draw [->] (5) edge (18);
            \draw [->] (33) edge (5);
            \draw [->] (38) edge (13);
            \draw [->] (38) edge (18);
            \draw [->] (28) edge (2);
            \draw [->, bend  right] (24) edge (9);
            \draw [->] (44) edge (17);
            \draw [->, bend left] (17) edge (18);
            \draw [->, bend left] (17) edge (5);
            \draw [->] (2) edge (18);
            \draw [->] (13) edge (5);
            \draw [->] (44) edge (33);
            \draw [->] (33) edge (13);
            \draw [->] (13) edge (18);
            \draw [->] (44) edge (13);
            \draw [->] (33) edge (18);
            \draw [->] (44) edge (5);
            \draw[->] (38) edge (17);
            \draw[->, magenta, thick, bend right] (18) edge (44);
            \draw[->, magenta, thick, bend right] (18) edge (5);
            \draw[->, magenta, thick, bend right] (5) edge (44);
            \draw[->, magenta, thick] (2) edge (44);
            \draw[->, magenta, thick] (9) edge (44);
            \draw[->, magenta, thick] (2) edge (13);
            \draw[->, magenta, thick, bend left] (18) edge (13);
            \draw[->, magenta, thick] (9) edge (33);
            \draw[->, magenta, thick] (2) edge (33);
            \draw[->, magenta, thick, bend right] (18) edge (33);
            \draw[->, magenta, thick] (5) edge (38);
            \draw[->, magenta, thick, bend left] (2) edge (38);
            \draw[->, magenta, thick, bend right] (18) edge (38);
            \draw[->, magenta, thick] (28) edge (44);
            \draw[->, magenta, thick] (28) edge (38);
            \draw[->, magenta, thick, bend right] (2) edge (28);
            \draw[->, magenta, thick] (24) edge (44);
            \draw[->, magenta, thick] (9) edge (24);
            \draw[->, magenta, thick] (18) edge (17);
            \draw[->, magenta, thick] (5) edge (17);
		\end{tikzpicture}
        \caption{Part of the double category associated to the tree $Z_{2,2}$.}
        \label{z22im}
    \end{figure}
\end{example}

\section{Relationship with the 2-Segal set of the underlying graph} \label{relation}

In this section, we explore the relationship between the 2-Segal set arising from a rooted tree and that arising from the underlying graph. We begin by recalling the following definition of a 2-Segal set associated with a graph $G$ from \cite{2s}.

\begin{definition} 
Let $G$ be a finite graph.  We define a simplicial set $X^G$ associated to $G$ as follows.
\begin{enumerate}
    \item The set $X^G_0$ has a single element that we denote by $\varnothing$.
    
    \item The set $X^G_1$ is the set of all subgraphs of $G$.
    
    \item Any $X^G_n$ has elements $(H; S_1 , \dots, S_n)$ where $H$ is a subgraph of $G$ and the sets $S_1 , \dots , S_n$ form a partition of the set $V(H)$ of vertices into $n$ disjoint (but possibly empty) sets.
    
    \item The face maps $d_i \colon X^G_n \rightarrow X^G_{n-1}$ are defined as 
    $d_i(H; S_1 , \dots, S_n)= (H;, S_1,\dots, S_i\union S_{i+1}, S_{i+2}, \dots, S_n) $ for $1 \leq i \leq n-1$, $d_0(H; S_1 , \dots, S_n)=(H\setminus S_1; S_2, \dots, S_n)$ and $d_n(H; S_1 , \dots, S_n)=(H\setminus S_n; S_1, \dots, S_{n-1})$ where $H\setminus S$ denotes the subgraph of $H$ spanned by $V(H)\setminus S$ for $S = S_1, S_n$.
    
    \item The degeneracy maps  $s_i \colon X^G_n \rightarrow X^G_{n+1}$ are given by 
    \[ s_i(H; S_1 , \dots, S_n)= (H; S_1 , \dots, S_i, \varnothing, S_{i+1}, \dots, S_n). \]
\end{enumerate}
\end{definition}

While in \cite{2s}, only the labelled case is considered, just as in the case of rooted trees and their cuts, we can also consider the case where the graph is unlabelled and we take all unlabelled subgraphs.  In other words, when the graph is unlabelled we consider isomorphism classes of graphs with partitions. 

\begin{example}
    Consider the labelled graph $G$ with vertices $a,b,c$ and edges $(a,b), (b,c)$.  If we consider this graph together with its subgraphs, then $X^G_1$ has 13 subgraphs (one empty, three consisting of a single vertex, five with two vertices and four with three vertices). Then $X^G_2$ has 34 non-degenerate elements, i.e., elements in which both subsets in the partition are nonempty. In Figure \ref{labgraph2} we depict half of them; the remaining half is obtained by exchanging the two parts (that is, the colours) of the partition.
    \begin{figure}[ht]
    \begin{tikzpicture}
    \node[circle, draw, scale=.4, fill=yellow] (0) at (0,0){$a$};
    \node[circle, draw, scale=.4, fill=pink]   (1) at (0,.5){$b$};
    \draw(0)--(1);

    \node[circle, draw, scale=.4, fill=yellow] (0x) at (1,1){$c$};
    \node[circle, draw, scale=.4, fill=pink]   (1x) at (1,.5){$b$};
    \draw(0x)--(1x);

    \node[circle, draw, scale=.4, fill=yellow] (0) at (2,0){$a$};
    \node[circle, draw, scale=.4, fill=pink]   (1) at (2,.5){$b$};

    \node[circle, draw, scale=.4, fill=yellow] (0x) at (3,1){$c$};
    \node[circle, draw, scale=.4, fill=pink]   (1x) at (3,.5){$b$}; 

    \node[circle, draw, scale=.4, fill=yellow] (0x) at (4,0){$a$};
    \node[circle, draw, scale=.4, fill=pink]   (1x) at (4,1){$c$};

    \node[circle, draw, scale=.4, fill=yellow] (0y) at (5,0){$a$};
    \node[circle, draw, scale=.4, fill=pink]   (1y) at (5,0.5){$b$};
    \node[circle, draw, scale=.4, fill=pink]   (2) at (5,1){$c$};
    \draw(0y)--(1y); \draw(1y)--(2);

    \node[circle, draw, scale=.4, fill=yellow] (0yz) at (6,0){$a$};
    \node[circle, draw, scale=.4, fill=yellow] (1yz) at (6,0.5){$b$};
    \node[circle, draw, scale=.4, fill=pink]   (2z) at (6,1){$c$};
    \draw(0yz)--(1yz); \draw(1yz)--(2z);

    \node[circle, draw, scale=.4, fill=yellow] (0yzw) at (7,0){$a$};
    \node[circle, draw, scale=.4, fill=pink] (1yzw) at (7,0.5){$b$};
    \node[circle, draw, scale=.4, fill=yellow]   (2zw) at (7,1){$c$};
    \draw(0yzw)--(1yzw); \draw(1yzw)--(2zw);

    \node[circle, draw, scale=.4, fill=yellow] (30y) at (8,0){$a$};
    \node[circle, draw, scale=.4, fill=pink]   (31y) at (8,0.5){$b$};
    \node[circle, draw, scale=.4, fill=pink]   (32) at  (8,1){$c$};
    \draw(30y)--(31y); 

    \node[circle, draw, scale=.4, fill=yellow] (30yz) at (9,0){$a$};
    \node[circle, draw, scale=.4, fill=yellow] (31yz) at (9,0.5){$b$};
    \node[circle, draw, scale=.4, fill=pink]   (32z) at  (9,1){$c$};
    \draw(30yz)--(31yz); 

    \node[circle, draw, scale=.4, fill=yellow] (3) at (10,0){$a$};
    \node[circle, draw, scale=.4, fill=pink]   (4) at (10,0.5){$b$};
    \node[circle, draw, scale=.4, fill=yellow] (5) at (10,1){$c$};
    \draw(3)--(4);

    \node[circle, draw, scale=.4, fill=yellow] (30y) at (11,0){$a$};
    \node[circle, draw, scale=.4, fill=pink]   (31yw) at (11,0.5){$b$};
    \node[circle, draw, scale=.4, fill=pink]   (32w) at  (11,1){$c$};
    \draw(32w)--(31yw); 

    \node[circle, draw, scale=.4, fill=yellow] (30yz) at (12,0){$a$};
    \node[circle, draw, scale=.4, fill=yellow] (31w) at  (12,0.5){$b$};
    \node[circle, draw, scale=.4, fill=pink]   (32zw) at  (12,1){$c$};
    \draw(32zw)--(31w); 

    \node[circle, draw, scale=.4, fill=yellow] (3) at (13,0){$a$};
    \node[circle, draw, scale=.4, fill=pink]   (4w) at (13,0.5){$b$};
    \node[circle, draw, scale=.4, fill=yellow] (5w) at (13,1){$c$};
    \draw(5w)--(4w);

    \node[circle, draw, scale=.4, fill=yellow] (30y) at  (14,0){$a$};
    \node[circle, draw, scale=.4, fill=pink]   (31yw) at (14,0.5){$b$};
    \node[circle, draw, scale=.4, fill=pink]   (32w) at  (14,1){$c$};
    
    \node[circle, draw, scale=.4, fill=yellow] (30yz) at (15,0){$a$};
    \node[circle, draw, scale=.4, fill=yellow] (31w) at  (15,0.5){$b$};
    \node[circle, draw, scale=.4, fill=pink]   (32zw) at (15,1){$c$};

    \node[circle, draw, scale=.4, fill=yellow] (3) at  (16,0){$a$};
    \node[circle, draw, scale=.4, fill=pink]   (4w) at (16,0.5){$b$};
    \node[circle, draw, scale=.4, fill=yellow] (5w) at (16,1){$c$};
    \end{tikzpicture}
    \caption{Some elements in $X^G_2$, labelled case} \label{labgraph2}
\end{figure}
    
There are 24 non-degenerate elements $(H;S_1,S_2,S_3)$ in $X_3^G$, obtained by considering, for each of the four subgraphs $H$ of $G$ that contain all three vertices, the six partitions of the three vertices into three singleton sets $S_1,S_2,S_3$.

If we consider the associated tree we only have, depending on the choice of the root,  six or seven non-degenerate elements $X_1^T$, as we saw in Example \ref{convex}; four or seven in $X^T_2$, as we can see in Figure \ref{comp}, 
\begin{figure}[ht]
    \centering
    \begin{tabular}{|ccccccc|}
    \hline
    \begin{tikzpicture}
        \node[draw, circle, scale=.4, fill=black] (a) at (0,0){};
        \node[draw, circle, scale=.4] (b) at (0,1){};
        \node[draw, circle, scale=.4] (c) at (0,2){};
        \node at (0.2,0){$a$};
        \node at (0.2,1){$b$};
        \node at (0.2,2){$c$};
        \node (x) at (-0.5, 0.5){};
        \node (y) at (0.5, 0.5){};
            \draw[dashed] (x) to (y);
            \draw (a)--(b);
            \draw (b)--(c);
    \end{tikzpicture} 
    &
    \begin{tikzpicture}
        \node at (0.2,0){$a$};
        \node at (0.2,1){$b$};
        \node at (0.2,2){$c$};
        \node[draw, circle, scale=.4, fill=black] (a) at (0,0){};
        \node[draw, circle, scale=.4] (b) at (0,1){};
        \node[draw, circle, scale=.4] (c) at (0,2){};
        \node (x) at (-0.5, 1.5){};
        \node (y) at (0.5, 1.5){};
        \draw[dashed] (x) to (y);
        \draw (a)--(b);
        \draw (b)--(c);
    \end{tikzpicture} 
    &
    \begin{tikzpicture}
        \node at (0.2,0){$a$};
        \node at (0.2,1){$b$};
        \node[draw, circle, scale=.4, fill=black] (a) at (0,0){};
        \node[draw, circle, scale=.4] (b) at (0,1){};
        \node (x) at (-0.5, 0.5){};
        \node (y) at (0.5, 0.5){};
        \draw[dashed] (x) to (y);
        \draw (a)--(b);
    \end{tikzpicture} 
    &
    \begin{tikzpicture}
        \node at (0.2,1){$b$};
        \node at (0.2,2){$c$};
        \node[draw, circle, scale=.4, fill=black] (b) at (0,1){};
        \node[draw, circle, scale=.4] (c) at (0,2){};
        \node (x) at (-0.5, 1.5){};
        \node (y) at (0.5, 1.5){};
            \draw[dashed] (x) to (y);
            \draw (b)--(c);
    \end{tikzpicture} 
    & & & \\
    \hline
    \begin{tikzpicture}
        \node[draw, circle, scale=.4, fill=black] (b) at (0,0){};
        \node[draw, circle, scale=.4] (a) at (-0.5,1){};
        \node[draw, circle, scale=.4] (c) at (0.5,1){};
        \node at (0.2,0){$b$};
        \node at (-0.7,1){$a$};
        \node at (0.7,1){$c$};
            \draw (a)--(b);
            \draw (b)--(c);
        \node (x) at (0.5,0.5){};
        \node (y) at (-0.5,0.5){};
            \draw[dashed] (x) to (y);
    \end{tikzpicture} 
    &
    \begin{tikzpicture}
        \node at (0.2,0){$b$};
        \node at (-0.7,1){$a$};
        \node at (0.7,1){$c$};
        \node[draw, circle, scale=.4, fill=black] (b) at (0,0){};
        \node[draw, circle, scale=.4] (a) at (-0.5,1){};
        \node[draw, circle, scale=.4] (c) at (0.5,1){};
            \draw (a)--(b);
            \draw (b)--(c);
        \node (x) at (0,1){};
        \node (y) at (-0.5,0){};
            \draw[dashed] (x) to (y);
    \end{tikzpicture} 
    &
    \begin{tikzpicture}
        \node at (0.2,0){$b$};
        \node at (-0.7,1){$a$};
        \node at (0.7,1){$c$};
        \node[draw, circle, scale=.4, fill=black] (b) at (0,0){};
        \node[draw, circle, scale=.4] (a) at (-0.5,1){};
        \node[draw, circle, scale=.4] (c) at (0.5,1){};
            \draw (a)--(b);
            \draw (b)--(c);
        \node (x) at (0,1){};
        \node (y) at (0.5,0){};
            \draw[dashed] (x) to (y);
    \end{tikzpicture} 
    &
    \begin{tikzpicture}
        \node at (0.2,0){$b$};
        \node at (0.7,1){$c$};
        \node[draw, circle, scale=.4, fill=black] (b) at (0,0){};
        \node[draw, circle, scale=.4] (c) at (0.5,1){};
            \draw (b)--(c);
        \node (x) at (0,1){};
        \node (y) at (0.5,0){};
            \draw[dashed] (x) to (y);
    \end{tikzpicture} 
    &
    \begin{tikzpicture}
        \node at (0.2,0){$b$};
        \node at (-0.7,1){$a$};
        \node[draw, circle, scale=.4, fill=black] (b) at (0,0){};
        \node[draw, circle, scale=.4] (a) at (-0.5,1){};
            \draw (a)--(b);
        \node (x) at (0,1){};
        \node (y) at (-0.5,0){};
            \draw[dashed] (x) to (y);
    \end{tikzpicture} 
    &
    \begin{tikzpicture}
        \node at (0.2,0){$a$};
        \node at (0.2,1){$c$};
        \node[draw, circle, scale=.4, fill=black] (a) at (0,0){};
        \node[draw, circle, scale=.4, fill=black] (c) at (0,1){};
        \node (x) at (0.4,0.5){};
        \node (y) at (-0.4,0.5){};
            \draw[dashed] (x) to (y);
    \end{tikzpicture}
    &
    \begin{tikzpicture}
        \node at (0.2,1){$a$};
        \node at (0.2,0){$c$};
        \node[draw, circle, scale=.4, fill=black] (a) at (0,0){};
        \node[draw, circle, scale=.4, fill=black] (c) at (0,1){};
        \node (x) at (0.4,0.5){};
        \node (y) at (-0.4,0.5){};
        \draw[dashed] (x) to (y);
    \end{tikzpicture} \\ \hline 
    \end{tabular}
    \caption{The nondegenerate elements of $X_2^T$ when $T$ has a root of degree one (first row) and when $T$ has a root of degree two (second row).} \label{comp}
\end{figure}
and one or two in $X_3^T$.

Observe in particular that the 2-Segal set obtained from a rooted tree and its cuts is much smaller than the 2-Segal set obtained from the underlying graph and all of its subgraphs, reflecting both the fact that not all subgraphs can be obtained by admissible cuts, but also that we have an inherent ordering on a tree that is not present for the underlying graph.
\end{example}

We now consider an example of this construction applied to an unlabelled graph.

\begin{example}
Now consider the non-planar unlabelled tree $T=K_{1,3}$ with one vertex of degree three and three vertices of degree one. Then in the 2-Segal set $X^G$ associated to the underlying graph $G$ we see that the set $X^G_1$ contains eleven elements: the subgraphs
\[ \begin{tikzpicture}
    \node[draw, circle, scale=.4] (0) at (0,0){};
    \node[draw, circle, scale=.4] (1) at (-0.5,0.5){}; 
    \node[draw, circle, scale=.4] (3) at (0.5,0.5){}; 
    \node[draw, circle, scale=.4] (2) at (0,1){}; 
    \draw (0)--(1);
    \draw (0)--(2);
    \draw (0)--(3);

    \node[draw, circle, scale=.4] (01) at (2,0){};
    \node[draw, circle, scale=.4] (11) at (1.5,0.5){}; 
    \node[draw, circle, scale=.4] (31) at (2.5,0.5){}; 
    \node[draw, circle, scale=.4] (21) at (2,1){}; 
    \draw (01)--(11);
    \draw (01)--(21);
 
    \node[draw, circle, scale=.4] (02) at (4,0){};
    \node[draw, circle, scale=.4] (12) at (3.5,0.5){}; 
    \node[draw, circle, scale=.4] (32) at (4.5,0.5){}; 
    \node[draw, circle, scale=.4] (22) at (4,1){}; 
    \draw (02)--(12); 

    \node[draw, circle, scale=.4] (03) at (6,0){};
    \node[draw, circle, scale=.4] (13) at (5.5,0.5){}; 
    \node[draw, circle, scale=.4] (23) at (6.5,0.5){}; 
    \draw (03)--(13);
    \draw (03)--(23);

    \node[draw, circle, scale=.4] (031) at (8,0){};
    \node[draw, circle, scale=.4] (131) at (7.5,0.5){}; 
    \node[draw, circle, scale=.4] (331) at (8.5,0.5){}; 
    \draw (031)--(131);

    \node[draw, circle, scale=.4] (04) at (10,0){};
    \node[draw, circle, scale=.4] (14) at (10,1){}; 
    \draw (04)--(14);

\end{tikzpicture} \]
together with the five subgraphs having no edges and $0\leq i\leq 4$ vertices. In contrast, for the 2-Segal set $X^T$ one can check that $X^T_1$ has cardinality seven. In fact, in this case the cardinality is independent of which vertex we choose to be the root. 

In Figure \ref{unlabelled},
\begin{figure} 
\begin{tikzpicture}
    \node[draw, circle, scale=.5, fill=orange] (0) at (0,0){};
    \node (x) at  (0.3,0){2};
    \node[draw, circle, scale=.5, fill=violet] (1) at (-0.5,0.5){}; 
    \node[draw, circle, scale=.5, fill=orange] (3) at (0.5,0.5){}; 
    \node[draw, circle, scale=.5, fill=violet] (2) at (0,1){}; 
    \draw (0)--(1);
    \draw (0)--(2);
    \draw (0)--(3);

\node (x) at  (2.3,0){2};
    \node[draw, circle, scale=.5, fill=violet] (01) at (2,0){};
    \node[draw, circle, scale=.5, fill=orange] (11) at (1.5,0.5){}; 
    \node[draw, circle, scale=.5, fill=violet] (31) at (2.5,0.5){}; 
    \node[draw, circle, scale=.5, fill=violet] (21) at (2,1){}; 
    \draw (01)--(11);
    \draw (01)--(21);
    \draw (01)--(31);

 \node (x) at  (4.3,0){2};
    \node[draw, circle, scale=.5, fill=orange] (02) at (4,0){};
    \node[draw, circle, scale=.5, fill=violet] (12) at (3.5,0.5){}; 
    \node[draw, circle, scale=.5, fill=violet] (32) at (4.5,0.5){}; 
    \node[draw, circle, scale=.5, fill=violet] (22) at (4,1){}; 
    \draw (02)--(12); 
    \draw (02)--(22);
    \draw (02)--(32);

\node (x) at  (6.3,0){2};
    \node[draw, circle, scale=.5, fill=violet] (03) at (6,0){};
    \node[draw, circle, scale=.5, fill=orange] (13) at (5.5,0.5){}; 
    \node[draw, circle, scale=.5, fill=orange] (23) at (6.5,0.5){};
    \node[draw, circle, scale=.5, fill=violet] (33) at (6,1){}; 
    \draw (03)--(13);
    \draw (03)--(33);
    
\node (x) at  (8.3,0){2};
    \node[draw, circle, scale=.5, fill=orange] (031) at (8,0){};
    \node[draw, circle, scale=.5, fill=violet] (131) at (7.5,0.5){}; 
    \node[draw, circle, scale=.5, fill=orange] (331) at (8.5,0.5){}; 
    \node[draw, circle, scale=.5, fill=violet] (221) at (8,1){}; 
    \draw (031)--(131);
    \draw (031)--(221);
    
\node (x) at  (10,0){2};
    \node[draw, circle, scale=.5, fill=violet] (04) at (9.7,0){};
    \node[draw, circle, scale=.5, fill=violet] (14) at (9.2,0.5){}; 
    \node[draw, circle, scale=.5, fill=orange] (24) at (10.2,0.5){};
    \node[draw, circle, scale=.5, fill=violet] (34) at (9.7,1){}; 
    \draw (04)--(14);
    \draw (04)--(24);

\node (x) at  (11.3,0){2};
    \node[draw, circle, scale=.5, fill=violet] (04) at (11,0){};
    \node[draw, circle, scale=.5, fill=orange]  (14) at (11,1){}; 
    \node[draw, circle, scale=.5, fill=orange] (24) at (11.5,0.5){};

 \node (x1) at  (0.3,2){1};
    \node[draw, circle, scale=.5, fill=violet] (0x) at (0,2){};
    \node[draw, circle, scale=.5, fill=orange]  (1x) at (-0.5,2.5){}; 
    \node[draw, circle, scale=.5, fill=orange] (3x) at (0.5,2.5){}; 
    \node[draw, circle, scale=.5, fill=violet] (2x) at (0,3){}; 
    \draw (0x)--(3x);

\node (x2) at  (2.3,2){2};
    \node[draw, circle, scale=.5, fill=violet]  (01x) at (2,2){};
    \node[draw, circle, scale=.5, fill=orange] (11x) at (1.5,2.5){}; 
    \node[draw, circle, scale=.5, fill=violet] (31x) at (2.5,2.5){}; 
    \node[draw, circle, scale=.5, fill=violet] (21x) at (2,3){}; 
    \draw (01x)--(11x);

 \node (x3) at  (4.3,2){2};
    \node[draw, circle, scale=.5, fill=violet]  (02x) at (4,2){};
    \node[draw, circle, scale=.5, fill=orange] (12x) at (3.5,2.5){}; 
    \node[draw, circle, scale=.5, fill=violet] (22x) at (4,3){}; 
    \draw (02x)--(12x); 
    \draw (02x)--(22x);

\node (x4) at  (5.8,2){2};
    \node[draw, circle, scale=.5, fill=violet] (03x) at (5.5,2){};
    \node[draw, circle, scale=.5, fill=orange]  (13x) at (5,2.5){}; 
    \node[draw, circle, scale=.5, fill=orange] (23x) at (5.5,3){};
    \draw (03x)--(13x);
    \draw (03x)--(23x);

\node (x5) at  (7.4,2){2};
    \node[draw, circle, scale=.5, fill=orange]  (031x) at (7.1,2){};
    \node[draw, circle, scale=.5, fill=orange] (131x) at (6.6,2.5){}; 
    \node[draw, circle, scale=.5, fill=violet] (331x) at (7.1,3){}; 
    \node[draw, circle, scale=.5, fill=violet] (221x) at (7.6,2.5){}; 
    \draw (031x)--(131x);
    \draw (031)--(221);

\node (x6) at  (9.3,2){2};
    \node[draw, circle, scale=.5, fill=orange]  (3xy) at (9,2){};
    \node[draw, circle, scale=.5, fill=orange] (1xy) at (8.5,2.5){}; 
    \node[draw, circle, scale=.5, fill=violet] (31x) at (9,3){}; 
    \node[draw, circle, scale=.5, fill=orange] (21x) at (9.5,2.5){}; 
    \draw (1xy)--(3xy);
    \draw (3xy)--(21x);
    
\node (x7) at  (11.3,2){2};
    \node[draw, circle, scale=.5, fill=violet] (04x) at (11,2){};
    \node[draw, circle, scale=.5, fill=orange]  (14x) at (10.5,2.5){}; 
    \node[draw, circle, scale=.5, fill=orange] (24x) at (11.5,2.5){};
    \node[draw, circle, scale=.5, fill=orange] (34x) at (11,3){}; 
    \draw (04x)--(14x);
    \draw (04x)--(24x);

\node (yx1) at  (0.3,4){1};
    \node[draw, circle, scale=.5, fill=violet] (0xy) at (0,4){};
    \node[draw, circle, scale=.5, fill=orange] (1xy) at (-0.5,4.5){}; 

\node (yx2) at  (2.3,4){1};
    \node[draw, circle, scale=.5, fill=violet] (01xy) at (2,4){};
    \node[draw, circle, scale=.5, fill=orange] (11xy) at (1.5,4.5){}; 
    \draw (01xy)--(11xy);
   
 \node (yx3) at  (4.3,4){2};
    \node[draw, circle, scale=.5, fill=violet] (02xy) at (4,4){};
    \node[draw, circle, scale=.5, fill=orange] (12xy) at (3.5,4.5){}; 
    \node[draw, circle, scale=.5, fill=violet] (22xy) at (4,5){}; 
    \draw (02xy)--(22xy);

\node (yx4) at  (5.8,4){2};
    \node[draw, circle, scale=.5, fill=violet] (03xy) at (5.5,4){};
    \node[draw, circle, scale=.5, fill=orange] (13xy) at (5,4.5){}; 
    \node[draw, circle, scale=.5, fill=orange] (23xy) at (5.5,5){};
    \draw (03xy)--(13xy);

\node (yx5) at  (7.4,4){2};
    \node[draw, circle, scale=.5, fill=orange] (031xy) at (7.1,4){};
    \node[draw, circle, scale=.5, fill=orange] (131xy) at (6.6,4.5){}; 
    \node[draw, circle, scale=.5, fill=orange] (331xy) at (7.1,5){}; 
    \node[draw, circle, scale=.5, fill=violet] (221xy) at (7.6,4.5){}; 
    \draw (031xy)--(131xy);

\node (yx6) at  (9.3,4){2};
    \node[draw, circle, scale=.5, fill=orange] (y3xy) at (9,4){};
    \node[draw, circle, scale=.5, fill=orange] (y1xy) at (8.5,4.5){}; 
    \node[draw, circle, scale=.5, fill=violet] (y31x) at (9,5){}; 
    \node[draw, circle, scale=.5, fill=orange] (y21x) at (9.5,4.5){}; 
       
\node (yx7) at  (11.3,4){1};
    \node[draw, circle, scale=.5, fill=violet] (04xy) at (11,4){};
    \node[draw, circle, scale=.5, fill=violet] (14xy) at (10.5,4.5){}; 
    \node[draw, circle, scale=.5, fill=orange] (24xy) at (11.5,4.5){};
    \node[draw, circle, scale=.5, fill=orange] (34xy) at (11,5){}; 
\end{tikzpicture} 
\caption{Some elements in $X^G_2$, unlabelled case} \label{unlabelled}
\end{figure}
we depict the nondegenerate elements of $X_2^G$. The partition of the vertices is indicated by the two colours. The number below each element indicates how many distinct (that is, non-isomorphic) elements can by obtained by permuting the colours.  Thus, there are 38 elements for which both parts of the partition are non-empty. In  $X_2^T$, however, the number of elements corresponding to a subtree with a nontrivial cut is either six or eight, depending on the choice of the root.  
   
In Figure \ref{nondeg}, 
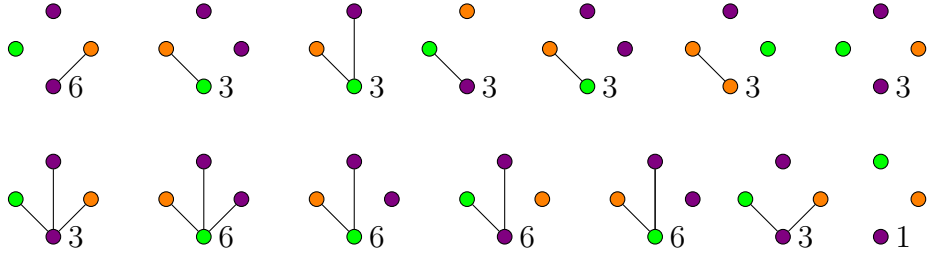
\begin{figure}[ht] \begin{tikzpicture}
    \node[draw, circle, scale=.5, fill=violet] (0) at (0,0){};
    \node (x) at  (0.3,0){3};
    \node[draw, circle, scale=.5, fill=green] (1) at (-0.5,0.5){}; 
    \node[draw, circle, scale=.5, fill=orange] (3) at (0.5,0.5){}; 
    \node[draw, circle, scale=.5, fill=violet] (2) at (0,1){}; 
    \draw (0)--(1);
    \draw (0)--(2);
    \draw (0)--(3);

\node (x) at  (2.3,0){6};
    \node[draw, circle, scale=.5, fill=green] (01) at (2,0){};
    \node[draw, circle, scale=.5, fill=orange] (11) at (1.5,0.5){}; 
    \node[draw, circle, scale=.5, fill=violet] (31) at (2.5,0.5){}; 
    \node[draw, circle, scale=.5, fill=violet] (21) at (2,1){}; 
    \draw (01)--(11);
    \draw (01)--(21);
    \draw (01)--(31);

 \node (x) at  (4.3,0){6};
    \node[draw, circle, scale=.5, fill=green] (02) at (4,0){};
    \node[draw, circle, scale=.5, fill=orange] (12) at (3.5,0.5){}; 
    \node[draw, circle, scale=.5, fill=violet] (32) at (4.5,0.5){}; 
    \node[draw, circle, scale=.5, fill=violet] (22) at (4,1){}; 
    \draw (02)--(12); 
    \draw (02)--(22);

\node (x) at  (6.3,0){6};
    \node[draw, circle, scale=.5, fill=violet] (03) at (6,0){};
    \node[draw, circle, scale=.5, fill=green] (13) at (5.5,0.5){}; 
    \node[draw, circle, scale=.5, fill=orange] (23) at (6.5,0.5){};
    \node[draw, circle, scale=.5, fill=violet] (33) at (6,1){}; 
    \draw (03)--(13);
    \draw (03)--(33);

\node (x) at  (8.3,0){6};
    \node[draw, circle, scale=.5, fill=green] (031) at (8,0){};
    \node[draw, circle, scale=.5, fill=orange] (131) at (7.5,0.5){}; 
    \node[draw, circle, scale=.5, fill=violet] (331) at (8.5,0.5){}; 
    \node[draw, circle, scale=.5, fill=violet] (221) at (8,1){}; 
    \draw (031)--(131);
    \draw (031)--(221);
    
\node (x) at  (10,0){3};
    \node[draw, circle, scale=.5, fill=violet] (04) at (9.7,0){};
    \node[draw, circle, scale=.5, fill=green] (14) at (9.2,0.5){}; 
    \node[draw, circle, scale=.5, fill=orange] (24) at (10.2,0.5){};
    \node[draw, circle, scale=.5, fill=violet] (34) at (9.7,1){}; 
    \draw (04)--(14);
    \draw (04)--(24);

\node (x) at  (11.3,0){1};
    \node[draw, circle, scale=.5, fill=violet] (04) at (11,0){};
    \node[draw, circle, scale=.5, fill=green]  (14) at (11,1){}; 
    \node[draw, circle, scale=.5, fill=orange] (24) at (11.5,0.5){};

 \node (x1) at  (0.3,2){6};
    \node[draw, circle, scale=.5, fill=violet] (0x) at (0,2){};
    \node[draw, circle, scale=.5, fill=green]  (1x) at (-0.5,2.5){}; 
    \node[draw, circle, scale=.5, fill=orange] (3x) at (0.5,2.5){}; 
    \node[draw, circle, scale=.5, fill=violet] (2x) at (0,3){}; 
    \draw (0x)--(3x);

\node (x2) at  (2.3,2){3};
    \node[draw, circle, scale=.5, fill=green]  (01x) at (2,2){};
    \node[draw, circle, scale=.5, fill=orange] (11x) at (1.5,2.5){}; 
    \node[draw, circle, scale=.5, fill=violet] (31x) at (2.5,2.5){}; 
    \node[draw, circle, scale=.5, fill=violet] (21x) at (2,3){}; 
    \draw (01x)--(11x);

 \node (x3) at  (4.3,2){3};
    \node[draw, circle, scale=.5, fill=green]  (02x) at (4,2){};
    \node[draw, circle, scale=.5, fill=orange] (12x) at (3.5,2.5){}; 
    \node[draw, circle, scale=.5, fill=violet] (22x) at (4,3){}; 
    \draw (02x)--(12x); 
    \draw (02x)--(22x);

\node (x4) at  (5.8,2){3};
    \node[draw, circle, scale=.5, fill=violet] (03x) at (5.5,2){};
    \node[draw, circle, scale=.5, fill=green]  (13x) at (5,2.5){}; 
    \node[draw, circle, scale=.5, fill=orange] (23x) at (5.5,3){};
    \draw (03x)--(13x);

\node (x) at  (7.4,2){3};
    \node[draw, circle, scale=.5, fill=green]  (031x) at (7.1,2){};
    \node[draw, circle, scale=.5, fill=orange] (131x) at (6.6,2.5){}; 
    \node[draw, circle, scale=.5, fill=violet] (331x) at (7.1,3){}; 
    \node[draw, circle, scale=.5, fill=violet] (221x) at (7.6,2.5){}; 
    \draw (031x)--(131x);
    \draw (031)--(221);

\node (x) at  (9.3,2){3};
    \node[draw, circle, scale=.5, fill=orange]  (3xy) at (9,2){};
    \node[draw, circle, scale=.5, fill=orange] (1xy) at (8.5,2.5){}; 
    \node[draw, circle, scale=.5, fill=violet] (31x) at (9,3){}; 
    \node[draw, circle, scale=.5, fill=green] (21x) at (9.5,2.5){}; 
    \draw (1xy)--(3xy);
    
\node (x) at  (11.3,2){3};
    \node[draw, circle, scale=.5, fill=violet] (04x) at (11,2){};
    \node[draw, circle, scale=.5, fill=green]  (14x) at (10.5,2.5){}; 
    \node[draw, circle, scale=.5, fill=orange] (24x) at (11.5,2.5){};
    \node[draw, circle, scale=.5, fill=violet] (34x) at (11,3){}; 
 \end{tikzpicture} 
 \caption{The non-degenerate elements of $X_3^G$.} \label{nondeg}
\end{figure}
we illustrate the non-degenerate elements of $X^G_3$, where as before the partition is indicated by the colouring and we record the number of distinct elements that can be obtained by permuting the parts. Thus $X^G_3$ has 55 non-degenerate elements. However,  if we consider the graph as a rooted tree, the set $X^T_3$ has only three or four non-degenerate elements (depending on the choice of the root). The 2-Segal set associated to a tree viewed as a graph is substantially larger than the 2-Segal set obtained from admissible cuts of the tree itself.
 
Finally observe that $X_4^G$ has just four non-degenerate elements, determined by which of the four singleton parts has the vertex of degree 3, while $X_4^T$ has just one.
\end{example}

\begin{lemma} \label{lem:tree-graph-simplicial}
Consider the 2-Segal sets $X^{T}$ and $X^{G}$ associated to a rooted tree $T$ and to the underlying graph $G=U(T)$.  There is a simplicial map $U \colon X^{T}\to X^{G}$ that sends a subforest $F$ with a layering of $n-1$ cuts to the underlying subgraph with the vertex partition defined by the layers
\[ U(F;\;L_{0}\supseteq\cdots\supseteq L_{n}) =(H;S_{1},\dots,S_{n}), \]
where $L_{0}=V(F)$, $L_{n}=\varnothing$, $H=U(F)$ and $S_{i}=L_{i-1}\setminus L_{i}$ for $i=1,\dots,n$.
\end{lemma}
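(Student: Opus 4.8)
The plan is to check two things: that the level-wise formula for $U$ produces a genuine element of $X^{G}$, and that these maps are compatible with every face and degeneracy operator. Since $X^{T}$ and $X^{G}$ are already simplicial sets (by the proposition above and the definition of $X^{G}$), these two checks are precisely what is needed for $U$ to be a morphism of simplicial sets.

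First I would confirm that the output lies in $X^{G}$. Given $(F;\,L_{0}\supseteq\cdots\supseteq L_{n})\in X^{T}_{n}$, the underlying graph $H=U(F)$ is a subgraph of $G=U(T)$ because $F$ is an admissible subforest of $T$, so $V(F)\subseteq V(T)$ and $E(F)\subseteq E(T)$; and the sets $S_{i}=L_{i-1}\setminus L_{i}$ are pairwise disjoint with union $L_{0}=V(F)=V(H)$, since the $L_{k}$ form a nested chain. Hence $(H;S_{1},\dots,S_{n})$ is an element of $X^{G}_{n}$. In the unlabelled case (planar or not) one checks in addition that $U$ descends to isomorphism classes: an isomorphism of layered subforests forgets to a graph isomorphism carrying each $S_{i}$ onto the corresponding part.

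Next I would compare the two families of structure maps case by case, keeping in mind that $S_{i}=L_{i-1}\setminus L_{i}$ and that $L_{n}\subseteq\cdots\subseteq L_{0}$. For $0<i<n$, the map $d_{i}$ of $X^{T}$ deletes $L_{i}$ from the chain; this leaves $U(F)$ unchanged and replaces the consecutive parts $S_{i},S_{i+1}$ by $L_{i-1}\setminus L_{i+1}=S_{i}\union S_{i+1}$, which is exactly the effect of $d_{i}$ on $X^{G}$, and similarly $s_{i}$ of $X^{T}$ repeats $L_{i}$, inserting an empty layer, matching $s_{i}$ of $X^{G}$. For $d_{0}$, note that $V(F)\setminus S_{1}=L_{1}$ because $L_{1}\subseteq L_{0}$, so the subgraph of $U(F)$ spanned by $V(U(F))\setminus S_{1}$ has vertex set $L_{1}$ and the edges of $F$ with both endpoints in $L_{1}$; that is, it equals $U(F|L_{1})$, while the layers of $d_{0}(F;L_{\bullet})$ are $S_{2},\dots,S_{n}$. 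The case of $d_{n}$ is symmetric: here $S_{n}=L_{n-1}$, and since $L_{n-1}\subseteq L_{k}$ for every $k\le n-1$, the layers of $d_{n}(F;L_{\bullet})=(F\setminus L_{n-1};\,L_{0}\setminus L_{n-1}\supseteq\cdots\supseteq\varnothing)$ are again $S_{1},\dots,S_{n-1}$, while $U(F\setminus L_{n-1})$ is the subgraph of $U(F)$ spanned by $V(F)\setminus L_{n-1}$, namely $U(F)\setminus S_{n}$.

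The content is elementary, and the only point deserving a moment's care is the identity $U(F|L)=U(F)\setminus\bigl(V(F)\setminus L\bigr)$ used for $d_{0}$ and $d_{n}$: it says that restricting the subforest to a set of vertices and then forgetting the tree structure yields the same graph as forgetting the tree structure and then passing to the spanned subgraph. This holds because both operations retain exactly the edges of $F$ whose two endpoints lie in the chosen vertex set. With that observation in hand, the verification is just bookkeeping with indices, so I do not anticipate a genuine obstacle.
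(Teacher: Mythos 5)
Your proposal is correct and follows essentially the same route as the paper: a case-by-case verification that $U$ commutes with $s_i$, with $d_i$ for $0<i<n$ (via $S_i\union S_{i+1}=L_{i-1}\setminus L_{i+1}$), and with $d_0$ and $d_n$ (via the identity $U(F|L)=U(F)\setminus(V(F)\setminus L)$). The only additions beyond the paper's argument are the explicit check that the image lands in $X^G_n$ and the remark on descent to isomorphism classes in the unlabelled case, both of which are harmless and correct.
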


\begin{proof}
We check that $U \colon X^{T}\to X^{G}$ preserves the simplicial degeneracy and face maps
\begin{equation} \label{simp-nat}
\begin{tikzcd}X^{T}_{n}\ar[d,"s_{i}"']\ar[r,"U_{n}"]&X^{G}_{n}\ar[d,"s_{i}"] \\ 
    X^{T}_{n+1} \ar[r,"U_{n+1}"]&X^{G}_{n+1}
\end{tikzcd}
\qquad\qquad\qquad
\begin{tikzcd}X^{T}_{n}\ar[d,"d_{i}"']\ar[r,"U_{n}"]&X^{G}_{n}\ar[d,"d_{i}"] \\ 
    X^{T}_{n-1} \ar[r,"U_{n-1}"]&X^{G}_{n-1}.
\end{tikzcd}
\end{equation}
For the degeneracy maps $s_{i}$ we have
\[ s_{i}U_{n}(F;L_{0}\supseteq\cdots\supseteq L_{n}) =s_{i}(H;S_{1},\dots,S_{n})=(H;S_{1},\dots,S_{i},\varnothing, S_{i+1},\dots,S_{n}) \]
and
\[ U_{n-1}s_{i}(F;L_{0}\supseteq\cdots\supseteq L_{n})=
    U_{n-1}(F;L_{0}\supseteq \cdots L_{i}\supseteq L_{i}\supseteq\cdots\supseteq L_{n}), \]
which agree.

Consider now the face maps $d_{i}$. Since $X^{T}_{0}$ and $X^{G}_{0}$ are singletons we can assume $n\geq 2$. For $1\le i\le n-1$ we have
\[ d_{i}U_{n}(F;L_{0}\supseteq\cdots\supseteq L_{n})=d_{i}(H;S_{1},\dots,S_{n})=(H;S_{1},\dots,S_{i}\union S_{i+1},\dots,S_{n}) \]
and    
\[ U_{n-1}d_{i}(F;L_{0}\supseteq\cdots\supseteq L_{n})= U_{n-1}(F;L_{0}\supseteq \cdots L_{i-1}\supseteq L_{i+1}\supseteq\cdots\supseteq L_{n}). \]
They agree since $S_{i}\union S_{i+1}=(L_{i-1}\setminus L_{i})\union(L_{i}\setminus L_{i+1})=L_{i-1}\setminus L_{i+1}$. In the case when $i=0$, we have
\[ d_{0}U_{n}(F;L_{0}\supseteq\cdots\supseteq L_{n})=d_{0}(H;S_{1},\dots,S_{n})=(H\setminus S_{1};\;S_{2},\dots,S_{n}) \]
and
\[ U_{n-1}d_{0}(F;L_{0}\supseteq\cdots\supseteq L_{n}) = U_{n-1}(F|L_1;\;L_{1} \supseteq \cdots\supseteq L_{n}), \]
which agree since $H\setminus S_{1}=U(F|L_{1})$: removing the vertices of $S_{1}=L_{0}\setminus L_{1}$ from $H$ gives the underlying graph of the forest $F|L_1$ defined by restricting $F$ to the vertices $L_{1}$.

In the case when $i=n$, we have
\[ d_{n}U_{n}(F;L_{0}\supseteq\cdots\supseteq L_{n})=d_{n}(H;S_{1},\dots,S_{n})=(H\!\setminus\! S_{n};\;S_{1},\dots,S_{n-1}) \]
and
\[ U_{n-1}d_{n}(F;L_{0}\supseteq\cdots\supseteq L_{n})= U_{n-1}(F|(L_{0}\!\setminus\! L_{n-1});\;(L_{0}\!\setminus\! L_{n-1})\supseteq\cdots\supseteq (L_{n-1}\!\setminus\! L_{n-1})), \]
which agree since $S_{i}=L_{i-1}\!\setminus\!L_{i}=(L_{i-1}\!\setminus\!L_{n-1})\setminus(L_{i}\!\setminus\!L_{n-1})$ for $i=1,\dots,n-1$, and removing the vertices of $S_{n}=L_{n-1}\!\setminus\! \varnothing$ from $H$ gives the underlying graph of $F$ restricted to the vertices $L_{0}\!\setminus\!L_{n-1}$.
\end{proof}

Observe that the simplicial map $U \colon X^{T}\to X^{G}$ is bijective in simplicial degree zero,  as $X_0^{T}$ and $X_0^{G}$ are both singletons.  However it is not CULF, nor it is relatively Segal, as we explain shortly. Intuitively these negative results say that decompositions of underlying graphs cannot always be uniquely lifted to decompositions of trees.

Let us define these two notions more formally.

\begin{definition} \label{culfdef}
A map of simplicial sets $X\to Y$ is \emph{conservative with unique lifting of factorizations}, or simply \emph{CULF}, if the squares
\[  \begin{tikzcd}X_{n}\ar[d,"s_j"']\ar[r]&Y_{n}\ar[d,"s_j"]
      \\ X_{n+1}      \ar[r]&Y_{n+1},
\end{tikzcd}
\qquad\qquad
\begin{tikzcd}X_{n}\ar[d,"d_{i}"']\ar[r]&Y_{n}\ar[d,"d_{i}"]
      \\ X_{n-1}      \ar[r]&Y_{n-1}
\end{tikzcd} \]
are pullbacks for each $0<i<n$ and each $0\leq j\leq n$.
\end{definition}

By \cite[Lemma 4.1]{dec} checking the CULF property for a map of simplicial sets $U:X^T\to X^G$ is equivalent to checking the following diagrams are pullbacks
\[ \begin{tikzcd}X^T_{n}\ar[d,"\lambda_n"']\ar[r,"U_n"]&X^G_{n}\ar[d,"\lambda_n"] \\ 
    X^T_{1} \ar[r,"U_{1}"]&X^G_{1}.
    \end{tikzcd} \]
for the vertical arrows $\lambda_0=s_0$ and $\lambda_n=d_1^{n-1}$ (where $n\geq2)$. The case $n=1$ is immediate as $\lambda_1=\id$.   Equivalently, we must check that the function $U_{n}\colon X^T_n\to X^G_n$ restricts to bijections between the fibres $\lambda_n^{-1}(H)$ and $\lambda_n^{-1}(U(H))$.  For $n=0$, the fibres are singletons if $H$ is the empty subforest, and are empty otherwise.  In other words, $U$ is conservative. For $n\geq2$ the map $\lambda_n=d_1^{n-1}$ is the function that forgets the cuts (or the partition of vertices) and returns just the subforest (or subgraph), and the condition says that any partition of a graph underlying an admissible subforest $H$ must arise from a unique layering of cuts of $H$. This idea is the motivation behind the terminology ``unique lifting of factorizations".

\begin{example}
Consider 
\begin{equation}\label{eq:TG-not-culf}
T=\begin{tikzpicture}[baseline=2ex,level distance=5mm, sibling distance=5mm, grow'=up, every node/.style={circle,draw,inner sep=1.5pt}]
\node[fill=black,"$b$" left ]{} child{node["$a$" left ]{}                            }
                     child{node["$c$" right]{} child{node["$d$" right]{}} };
\end{tikzpicture}\qquad\qquad\quad
G=U(T)=\begin{tikzpicture}[level distance=5mm,  grow'=right, every node/.style={draw,circle,inner sep=1.5pt}]
\node["$a$"]{}child{node["$b$"]{}child{node["$c$"]{}child{node["$d$"]{}}}};
\end{tikzpicture}.
\end{equation}
Then we see that $U$ fails to be CULF as the element $(G;\{a,c\}, \{b,d\})$ of $d_{1}^{-1}(G)$ is not in the image of $U_{2}$: the graph can be partitioned into two subgraphs containing no edges, but the tree cannot. 
\end{example}

The above example shows that $U \colon X^T\to X^G$ is not CULF in general; the same argument applies also to the labelled and unlabelled contexts.

\begin{prop}
   Let $T$ be a rooted tree and $G=U(T)$ its underlying graph. Then the induced map on 2-Segal sets $U\colon X^T \rightarrow X^G$ is CULF if and only if 
    \begin{itemize}
        \item $T$ has just one vertex, in the case that $T$ is a labelled tree, and
        
        \item $T$ has at most one edge, in the case that $T$ is an unlabelled tree.
    \end{itemize}
\end{prop}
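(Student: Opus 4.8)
The plan is to apply the criterion recalled just above: the simplicial map $U\colon X^T\to X^G$ is CULF if and only if, for every $n$ and every admissible subforest $H$ regarded as an element of $X^T_1$, the function $U_n$ restricts to a bijection between the fibre $\lambda_n^{-1}(H)$ of layerings of $H$ and the fibre $\lambda_n^{-1}(U(H))$ of vertex partitions of $U(H)$. The cases $n=0,1$ have already been dealt with, so everything happens for $n\ge2$, where $\lambda_n$ is the map that forgets all the cuts. I would split the argument into the two ``if'' implications and the two ``only if'' implications.

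For the ``if'' directions: if $T$ is a single vertex then so is $G=U(T)$, and a direct inspection shows that $U_n$ is a bijection for every $n$ --- both $X^T_n$ and $X^G_n$ are in bijection with $\{1,\dots,n\}\sqcup\{\ast\}$, the layering or partition being recorded by the step at which the unique vertex drops out, or by the block it occupies --- so $U$ is an isomorphism of simplicial sets, hence CULF. If instead $T$ is unlabelled with exactly one edge, say with vertices $v_0<v_1$ and edge $(v_0,v_1)$, I would first note that $X^T_1=\{[T],[\bullet],[\varnothing]\}$, where $[\bullet]$ is the single-vertex forest, and then check the fibre bijection over each of these three elements for all $n$. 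The cases $H=[\bullet]$ and $H=[\varnothing]$ are trivial, and the substance is the case $H=[T]$. Since $T$ has no nontrivial rooted-tree automorphism, $\lambda_n^{-1}([T])$ consists of the genuine layerings $V(T)=L_0\supseteq\cdots\supseteq L_n=\varnothing$ with each $L_i$ a lower subtree, i.e.\ $L_i\in\{\varnothing,\{v_0\},\{v_0,v_1\}\}$; such a layering is determined by the pair $1\le a\le b\le n$ of steps at which $v_1$, and then $v_0$, are removed. On the other side $\lambda_n^{-1}(U(T))$ is the set of isomorphism classes of partitions of the two vertices of $P_2$ into $n$ blocks, and since the nontrivial automorphism of $P_2$ interchanges those vertices, this is precisely the set of size-two multisets drawn from $\{1,\dots,n\}$. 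The layering labelled by $(a,b)$ is sent under $U_n$ exactly to the multiset $\{a,b\}$, so $U_n$ is the evident bijection, and $U$ is CULF.

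For the ``only if'' directions I would, in each case and with $H=T$, exhibit an element of $\lambda_2^{-1}(U(T))$ not in the image of $U_2$. In the labelled case $\lambda_2^{-1}(T)$ is the set of lower subtrees of $T$, $\lambda_2^{-1}(U(T))$ is the set of all subsets of $V(T)$ (each viewed as the second block of a partition), and $U_2$ is the inclusion; if $T$ has an edge then some non-root vertex $v_j$ exists, $\{v_j\}$ is not a lower subtree because it omits the predecessor of $v_j$, and so the partition $(U(T);V(T)\setminus\{v_j\},\{v_j\})$ witnesses the failure. In the unlabelled case with at least two edges the point is that no partition class can be chosen naively because of graph automorphisms; the relevant auxiliary fact is that a finite tree on at least three vertices is not vertex-transitive (the degree sum $2(\lvert V\rvert-1)$ exceeds $\lvert V\rvert$, so there is a vertex of degree $\ge2$ together with a leaf, while $\Aut(U(T))$ preserves degrees), hence there is a vertex $v\ne v_0$ lying outside the $\Aut(U(T))$-orbit of $v_0$. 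I would then argue that the isomorphism class of $(U(T);V(T)\setminus\{v\},\{v\})$ is not hit by $U_2$ on $\lambda_2^{-1}([T])$: a lift would give a lower subtree $L_1$ of $T$ and a graph automorphism $g$ of $U(T)$ with $g(L_1)=\{v\}$, forcing $L_1=\{g^{-1}(v)\}$ to be a single-vertex lower subtree, hence $g^{-1}(v)=v_0$ and $v\in\Aut(U(T))\cdot v_0$, a contradiction. One also checks that no other admissible subforest can supply a lift: since $T$ is connected with an edge, any admissible subforest of $T$ with underlying graph isomorphic to $U(T)$ contains all of $V(T)$ and therefore equals $T$. Thus $U$ is not CULF. (Planarity plays no role, as $X^G$ and $\Aut(U(T))$ see only the underlying graph.)

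I expect the main obstacle to be this last, unlabelled ``only if'' direction: choosing the non-liftable partition so that it survives all automorphisms of $U(T)$ --- which is the reason the vertex-transitivity observation is needed --- and then ruling out lifts through admissible subforests other than $T$ itself. The remaining ingredients, namely the fibre count $\binom{n+1}{2}$ in the one-edge case and the verification that $U$ is an isomorphism when $T$ is a point, are routine.
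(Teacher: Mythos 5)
Your proof is correct, and it follows the paper's overall skeleton: reduce to the fibre criterion for $\lambda_n=d_1^{n-1}$, treat the one-vertex case by observing that $U$ is an isomorphism, and treat the one-edge unlabelled case by matching the $\binom{n+1}{2}$ layerings of $T$ with the $\binom{n+1}{2}$ multiset partitions of the two (indistinguishable) vertices of $U(T)$ --- this positive half is essentially identical to the paper's. Where you genuinely diverge is in the negative direction. The paper chooses a \emph{small} admissible subforest --- a single edge in the labelled case, a rooted three-vertex path in the unlabelled case --- and simply counts the two fibres of $d_1$ ($3$ versus $4$, and $6$ versus $4$ or $5$, respectively), so bijectivity fails for cardinality reasons. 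You instead work over $H=T$ itself and exhibit an explicit element of $d_1^{-1}(U(T))$ that is not hit: a singleton block at a non-root vertex in the labelled case (which cannot be a lower subtree), and in the unlabelled case a singleton block at a vertex outside the $\Aut(U(T))$-orbit of the root, whose existence you derive from the fact that a tree on at least three vertices is not vertex-transitive. Both arguments are valid. The paper's counting is shorter and sidesteps automorphism orbits entirely; yours produces a concrete non-liftable partition (in the spirit of the example with vertices $a,b,c,d$ that the paper gives just before the proposition) and makes visible that the obstruction is precisely a failure of surjectivity, i.e.\ of lifting of factorizations. Your two closing remarks --- that planarity is irrelevant because the target fibre and $\Aut(U(T))$ see only the underlying graph, and that no admissible subforest other than $T$ can supply a lift --- are both correct, though the latter is not strictly required once the fibre over $[T]$ has been fixed by the pullback condition.
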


\begin{proof}
If $T$ is the tree with just one vertex then $U\colon X^T\to X^G$ is CULF as it is an isomorphism. 
    
Suppose that $T$ is a labelled tree with more than one vertex. Then choosing an edge we have an admissible subforest
\[ H=\left(\;\begin{tikzpicture}[baseline=1.5mm,level distance=5mm, grow'=up, every node/.style={circle,draw,inner sep=1.5pt}]
\node[fill=black,"{$a$}" right ]{} child{node["$b$" right ]{} };
\end{tikzpicture}
\right)\in X_1^T.\]
The fibres of $d_1$ over $H$ and $UH$ have 3 and 4 elements respectively:
    \begin{align*}
    d_1^{-1}(H)&=\{ (H\supseteq \{a,b\}\supseteq \varnothing),
    (H\supseteq\{a\}\supseteq \varnothing),
    (H\supseteq\varnothing\supseteq \varnothing)
    \} \subseteq X^T_2,\\
    d_1^{-1}(UH)&=\{(UH;\varnothing,\{a,b\}),(UH;\{a\},\{b\}),(UH;\{b\},\{a\}),(UH;\{a,b\},\varnothing)\} \subseteq X^G_2.    
    \end{align*}
Therefore $U\colon X^T\to X^G$ is not CULF.
    
Now assume that $T$ is an unlabelled tree with at least two edges. Then $G=U(T)$ contains a subgraph $UH=
\begin{tikzpicture}
    \node[circle, scale=.4, draw] (0) at (0,0){}; \node[circle, scale=.4, draw] (1) at (0.7,0){}; \node[circle, scale=.4, draw] (2) at (1.4,0){};
    \draw(0)--(1); \draw(1)--(2); 
    \end{tikzpicture}$, 
and the  fibre $d_1^{-1}(UH)\subseteq X_2^G$ is given by six partitions 
    \def\colours#1#2#3{
    \begin{tikzpicture}
        \node[circle, scale=.4, draw,fill=#1] (0) at (0,0){}; \node[circle, scale=.4, draw,fill=#2] (1) at (0.7,0){}; \node[circle, scale=.4, draw,fill=#3] (2) at (1.4,0){};
        \draw(0)--(1); \draw(1)--(2); 
    \end{tikzpicture}}
\[\qquad
\colours{violet}{violet}{violet}
,\qquad\quad
\colours{orange}{orange}{orange}
,\qquad\quad
\colours{orange}{violet}{orange}
,\quad\;\;
\colours{violet}{orange}{violet}
,\quad\;\;
\colours{violet}{violet}{orange}
,\quad
\colours{violet}{orange}{orange}
   \quad \]
\[ (UH; V(UH), \varnothing), (UH; \varnothing, V(UH)), (UH; \circ, \circ \circ), (UH; \circ \circ, \circ), (UH;\begin{tikzpicture}
        \node[circle, scale=.4, draw] (0) at (0,0){}; \node[circle, scale=.4, draw] (1) at (0.5,0){}; \draw(0)--(1);  
    \end{tikzpicture}, \circ ), (UH; \circ, \begin{tikzpicture}
        \node[circle, scale=.4, draw] (0) at (0,0){}; \node[circle, scale=.4, draw] (1) at (0.5,0){}; \draw(0)--(1);  
    \end{tikzpicture} ). \] 
However, the fibre of $d_1^{-1}(H)\subseteq X_2^T$ contains either four elements,
\[ (H \supset \varnothing \supseteq \varnothing), 
    \qquad(H \supseteq H \supset \varnothing), 
    \qquad(H \supset \bullet \supset \varnothing), 
    \qquad(H \supset \begin{tikzpicture}[baseline=0.3ex]  \node[circle, scale=.4, draw, fill=black] (0) at (0,0){}; \node[circle, scale=.4, draw] (1) at (0,0.4){}; \draw(0)--(1);   \end{tikzpicture} \supset \varnothing), \]
or, in the case that $T$ is planar and the root of $H$ has degree two (when the last element listed can arise in two non-isomorphic ways) five elements. In either case we see that $U\colon X^T\to X^G$ is not CULF.

Finally we consider the unlabelled rooted tree $T$ with just one edge and two vertices and show that, for each admissible subtree $H$, the functions $U_n$ restrict to bijections between $\lambda_n^{-1}(H)$, the $n$-layerings of $H$, and $\lambda_n^{-1}(UH)$, the $n$-partitions of the vertices of the underlying graph $UH$.  In the case $H=T$ we observe that the two vertices of $UH$ are indistinguishable up to an isomorphism of the graph, and so there are just $n+\binom n2$ partitions of the vertex set into $n$ parts. These partitions correspond under $U_n$ to the $n+\binom n2$ layerings by $n-1$ cuts of $T$.  

In the cases where $H$ has just one vertex or $H=\varnothing$ the result is immediate.
\end{proof}

Let us now turn to the definition of relatively Segal.

\begin{definition} \label{relSegal}
A map of simplicial sets $X \rightarrow Y$ is \emph{relatively Segal} if the square
\[ \begin{tikzcd}X_{n} \ar[d] \ar[r] & Y_{n} \ar[d] \\ 
    X_{1} \times \dots \times X_{1} \ar[r] & Y_{1}\times\dots\times Y_{1},
\end{tikzcd} \]
in which the vertical arrows are the Segal maps, is a pullback for each $n \geq 2$.
\end{definition}

\begin{prop}
     Let $T$ be a rooted tree and $G=U(T)$ its underlying graph. Then the induced map on 2-Segal sets $U\colon X^T \rightarrow X^G$ is  relatively Segal if and only if $T$ has just one vertex.
\end{prop}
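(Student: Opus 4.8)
The plan is to treat the two implications separately, the point being that the \emph{only if} direction only requires exhibiting a single value of $n$ for which the relatively Segal square fails to be a pullback, and $n=2$ will do.

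For the \emph{if} direction, if $T$ has a single vertex then $T$ has no edges, so the only admissible subforests of $T$ are $\bullet$ and $\varnothing$, the only subgraphs of $G=U(T)$ are $\bullet$ and $\varnothing$, and one checks directly that $U\colon X^T\to X^G$ is an isomorphism of simplicial sets (both sides are the standard $1$-simplex). Since in each relatively Segal square the two horizontal maps $X^T_n\to X^G_n$ and $(X^T_1)^n\to(X^G_1)^n$ are then bijections, every such square is automatically a pullback, so $U$ is relatively Segal.

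For the \emph{only if} direction, suppose $T$ has at least two vertices, hence at least one edge; I would show the $n=2$ square is not a pullback, i.e. that the comparison map $X^T_2\to X^G_2\times_{X^G_1\times X^G_1}(X^T_1\times X^T_1)$ is not surjective. First I would let $m$ be the largest integer for which $T$ has an admissible subforest isomorphic to $F_m$, the edge-free forest on $m$ vertices. By Proposition~\ref{all} the single vertex is an admissible subforest, so $m\geq 1$; and an admissible subforest occupying all of $V(T)$ must equal $T$ itself, since it is $L_0\setminus L_n$ for some layering and the subforest of $T$ spanned by every vertex is $T$, which has an edge, so $m<|V(T)|$. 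I would then fix an admissible subforest $H_0\cong F_m$, put $S_2=V(H_0)$, choose a vertex $v_0\in V(T)\setminus S_2$, set $S_1=\{v_0\}$, and let $K$ be the subgraph of $G$ with vertex set $S_1\sqcup S_2$ and no edges, so $K\cong F_{m+1}$.

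The remaining checks I expect to be routine. One verifies that $(K;S_1,S_2)\in X^G_2$ together with the admissible subforests realizing its two Segal components lies in the fibre product: the components $d_2(K;S_1,S_2)$ and $d_0(K;S_1,S_2)$ are the subgraphs $F_1$ on $S_1$ and $F_m$ on $S_2$, which are $U$ of the single-vertex subforest on $v_0$ and of $H_0$ respectively. On the other hand this element is not in the image of the comparison map, because any $(H\supseteq L_1)\in X^T_2$ mapping to it would have $U(H)=K\cong F_{m+1}$, forcing $H\cong F_{m+1}$ and contradicting the maximality of $m$. Hence the $n=2$ square is not a pullback and $U$ is not relatively Segal; the argument is uniform across the labelled, unlabelled, planar and non-planar cases. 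The one conceptual step requiring care is choosing the witness subgraph $F_{m+1}$ and seeing that it is genuinely unrealizable by cuts, which rests on the structural fact that an admissible subforest meeting every vertex of $T$ must be all of $T$.
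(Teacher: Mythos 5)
Your proposal is correct and follows the same overall strategy as the paper: for a one-vertex tree $U$ is an isomorphism (one small slip: neither $X^T$ nor $X^G$ is the standard $1$-simplex, since both have a \emph{single} $0$-simplex rather than two, but they are isomorphic to each other, which is all you need); and for a tree with an edge one shows the $n=2$ comparison map to the fibre product fails to be surjective. Where you differ is in the choice of witness. The paper takes an edge $(a,b)$ of $T$ and uses the edgeless subgraph on $\{a,b\}$, partitioned into singletons, arguing that this subgraph is not the underlying graph of any admissible subforest because admissible subforests are spanned. You instead take $m$ maximal with $F_m$ realizable as an admissible subforest and use the edgeless graph $F_{m+1}$. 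Your version is longer but buys genuine robustness in the unlabelled setting: there the paper's witness is only the isomorphism class $[F_2]$, which \emph{can} be realized by cuts (e.g.\ the two leaves of $K_{1,2}$ rooted at its centre, as in Example \ref{convex}), so that the paper's element may in fact lie in the image of $U_2$; your maximality argument rules this out once and for all and makes the "uniform across all four cases" claim actually hold. The supporting facts you rely on --- that single vertices are admissible subforests, that an admissible subforest on all of $V(T)$ is $T$ itself, and that $U(H)$ is the spanned (hence edge-containing) subgraph on $V(H)$ --- are all available in the paper, so the "routine checks" really are routine.
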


\begin{proof}
If $T$ is the tree with just one vertex then $U\colon X^T\to X^G$ is relatively Segal as it is an isomorphism. 
Suppose $T$ has more than one vertex, so that it has an admissible subforest   
\[ H=\left(\;\begin{tikzpicture}[baseline=1.5mm,level distance=5mm, grow'=up, every node/.style={circle,draw,inner sep=1.5pt}]
\node[fill=black,"{$a$}" right ]{} child{node["$b$" right ]{} };
\end{tikzpicture}\right)\in X_1^T.\]
Then in all cases (labelled or not, and planar or not) the diagram
\[ \begin{tikzcd}X^{T}_{2}\ar[d,"{(d_0,d_2)}"']\ar[rr,"U_{2}"]&&X^{G}_{2}\ar[d,"{(d_0,d_2)}"] \\ X^{T}_{1}\times X^{T}_{1}
    \ar[rr,"U_{1}\times U_1"]&& X^{G}_{1}\times X^{G}_{1}
    \end{tikzcd} \]
fails to be a pullback: observe that the subgraph of $S$ of $G$ containing both vertices of $H$, but not the edge, does not arise from an admissible subtree of $T$. Therefore the element $\sigma\in X_2^G$ given by a partition of $S$ into two non-empty parts is not in the image of $U_2$, though $(d_0\sigma,d_2\sigma)$ is clearly in the image of $U_1\times U_1$.  
\end{proof}

In the next section, we discuss the consequences of these results for Hall algebras.

\section{The associated Hall algebra}\label{hall}

In this section, we consider the Hall algebra associated with the 2-Segal set obtained from a rooted tree.  Let us begin by recalling the following definition from \cite{dk}; here a 2-Segal set $X$ is \emph{reduced} if $X_0$ consists of a single point.

\begin{definition} \label{defhall}
Let $X$ be a reduced 2-Segal set with finitely many non-degenerate simplices in each degree, and $k$ a field. The \emph{Hall algebra} $\mathcal H_k(X)=\mathcal  H(X)$ associated to $X$ has underlying $k$-vector space spanned by the elements of $X_1$.  If $1_b$ denotes the basis vector corresponding to $b \in X_1$, then the multiplication in $\mathcal H(X)$ is defined by
\[ 1_b \ast 1_{b'} = \sum_{b'' \in X_1} c_{bb'}^{b''} 1_{b''} \]
where $c_{bb'}^{b''}$ is the number of elements $c \in X_2$ such that $d_0(c)=b$, $d_2(c)=b'$, and $d_1(c)=b''$.  If $\ast$ denotes the single element of $X_0$, then the unit of $\mathcal H(X)$ is $1_{s_0(\ast)}$.
\end{definition}

Let us start by considering some specific examples of trees.  If $T$ is a rooted tree, we denote by $\mathcal H^T$ the Hall algebra associated to $X^T$.

\begin{example}
    Let $T=F_1$, the tree with a single vertex and no edges. Since the only nonempty subtree is the tree $T$ itself, $\mathcal H^T$ is a 2-dimensional vector space with basis $\{1_\varnothing,1_T\}$, multiplicative identity $1_\varnothing$ and $1_T\ast 1_T=0$.
\end{example}

\begin{example}
    Let $T$ be the labelled tree with root vertex $a$, one other vertex $b$, and a single edge between them.  Then $\mathcal H^T$ is the 4-dimensional vector space with basis $\{T, \bullet_a, \bullet_b, \varnothing\}$.  The only nontrivial multiplication is 
    \[ [\bullet_b][\bullet_a] = [T]. \]
    In particular, observe that 
    \[ [\bullet_a][\bullet_b]=0 \] 
    due to the ordering on the vertices inherent in the tree structure, so in particular $\mathcal H^T$ is not commutative.
\end{example}

By the same reasoning all nontrivial labelled trees give noncommutative Hall algebras.  For unlabelled trees, although the corresponding Hall algebras still need not be commutative, we can show that they are commutative for a certain family of trees classified by the following result.

\begin{prop} \label{comu}
  The Hall algebra associated to a unlabelled rooted tree $T$ is commutative if and only if $T$ has no vertices of degree higher than two and its root has degree one, in other words if $T$ is a path tree whose root vertex has degree one.  
\end{prop}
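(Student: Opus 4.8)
The plan is to read the structure constants of $\mathcal{H}^T$ directly off the face maps of $X_2^T$, and then handle the two implications separately. Recall from the example following Definition~\ref{simpdef} that for a cut $(H\supseteq L\supseteq\varnothing)\in X_2^T$ the face $d_0$ returns the lower subforest $L$, the face $d_2$ returns the upper subforest $H\setminus L$, and $d_1$ returns $H$. Thus the coefficient $c_{bb'}^{b''}$ of Definition~\ref{defhall} counts the cuts of the admissible subforest $b''$ whose lower part is isomorphic to $b$ and whose upper part is isomorphic to $b'$, and $\mathcal{H}^T$ is commutative precisely when, for every admissible subforest $b''$ and every pair $b,b'$, the number of cuts of $b''$ with lower part $\cong b$ and upper part $\cong b'$ equals the number of cuts with the roles of $b$ and $b'$ exchanged. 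This reformulation is the conceptual heart of the argument; everything after it is combinatorics.

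For the ``if'' direction, assume $T$ is a path $P_n$ rooted at an endpoint (with the degenerate convention $P_1=F_1$). By Remark~\ref{rem1} every admissible subforest of $P_n$ is a subtree, hence a subpath, and by Proposition~\ref{all} every subpath is attained, so $X_1^{P_n}=\{\varnothing=P_0,P_1,\dots,P_n\}$. Since the lower subtrees of $P_m$ are precisely its initial segments, for each $0\le k\le m$ there is a unique cut of $P_m$ with lower part $P_k$ and upper part $P_{m-k}$; as $P_k\cong P_{k'}$ forces $k=k'$, this gives $1_{P_i}\ast 1_{P_j}=1_{P_{i+j}}$ when $i+j\le n$ and $0$ otherwise. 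This formula is symmetric in $i$ and $j$, so $\mathcal{H}^{P_n}$ is commutative (and the unit $1_\varnothing=1_{P_0}$ behaves as expected).

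For the ``only if'' direction, I would argue the contrapositive. If $T$ is not a path rooted at an endpoint, then some vertex $v$ of $T$ has at least two children --- for otherwise every vertex would have at most one child and $T$ would be such a path (this one case subsumes both ``the root has degree $\ge 2$'' and ``an interior vertex has degree $\ge 3$''). Fix two children $w_1,w_2$ of $v$; the set $\{v,w_1,w_2\}$ spans a subtree $S$, a star rooted at $v$, which is an admissible subforest by Proposition~\ref{all}, and cutting both of its edges exhibits $F_2$ (two isolated vertices) as an admissible subforest. Now the coefficient of $1_S$ in $1_\bullet\ast 1_{F_2}$ is the number of cuts of $S$ with lower part $\cong\bullet$ and upper part $\cong F_2$, which is $1$, realised only by $L=\{v\}$; whereas the coefficient of $1_S$ in $1_{F_2}\ast 1_\bullet$ counts the cuts of $S$ whose lower part is $\cong F_2$, and since the lower part of any cut of a tree is empty or connected, this count is $0$. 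Hence $1_\bullet\ast 1_{F_2}\ne 1_{F_2}\ast 1_\bullet$, so $\mathcal{H}^T$ is not commutative; the same argument works verbatim whether or not $T$ is planar.

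I do not expect a genuine obstacle: once the reformulation in the first paragraph is in place, the proof is bookkeeping. The points needing a little care are, in the ``if'' direction, the identification $X_1^{P_n}=\{\varnothing,P_1,\dots,P_n\}$ together with the ``exactly one cut per initial segment'' count, and, in the ``only if'' direction, the verification that $F_2$ is indeed admissible in $T$ (it is the top layer of the layering $S\supseteq\{v\}\supseteq\varnothing$, reached after first cutting $T$ down to $S$) and that $L=\{v\}$ is the unique cut of $S$ with a singleton lower part. The mildest difficulty is simply recognising that the negation of the hypothesis is cleanly captured by ``$T$ has a vertex with two children''.
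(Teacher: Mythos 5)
Your proposal is correct and follows essentially the same route as the paper: for the ``only if'' direction both arguments exhibit an edgeless admissible subforest (your $F_2$, the paper's $F_n$) that occurs as the upper part of a cut but never as the lower part, since lower subforests of a subtree are empty or connected; for the ``if'' direction both compute $1_{P_i}\ast 1_{P_j}=1_{P_{i+j}}$ (or $0$) for the path and observe the symmetry. Your write-up is somewhat more careful about the bookkeeping (identifying $X_1^{P_n}$, checking admissibility of $F_2$, and phrasing the negated hypothesis as ``some vertex has two children''), but the underlying argument is the same.
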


\begin{proof}
    By Remark \ref{rem1}, if $T$ is a tree with a vertex of degree $n \geq 3$ (or $n \geq 2$ if it is the root vertex, then the forest $F_n$ with $n$ vertices and no edges is an element in $X_1$. Notice that this forest is always the top layer of the tree, so in particular it is never the bottom layer and $F_n\neq d_0(C)$ for all $C \in X_2.$  It follows that $1_{F_n} \ast 1_Y =0$ for every $Y \in X_1.$ However since $F_n = d_2(Z)$ for some $Z \in X_1$, we have that $1_{d_0(Z)} \ast F_n= 1_Z$, showing that $\mathcal{H}^T$ is noncommutative.

    Now let $P_n$ denote the unlabelled path tree on $n$ vertices with root a vertex of degree one. Then the elements of $X_1$ are the path graphs $P_m$ for $m\leq n$. Denote by $1_m$ the basis vector corresponding to $P_m$. Then $1_i \ast 1_j = 1_{i+j}$ if $i+j \leq n$ and $1_i \ast 1_j = 0$ if $i+j >n$. Thus $\mathcal{H}^{P_n}$ is commutative.
\end{proof}

While the associated Hall algebra is not commutative in general for trees having vertices of degree higher than two, if the tree is symmetric enough there are elements that do commute with each other.

\begin{example}
Consider the tree $T$ depicted in Figure \ref{HT} 
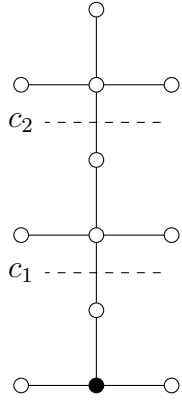
\begin{figure}
    \centering
       \begin{tikzpicture}
           \node[circle, draw, scale=.5, fill=black] (0) at (0,0){};
           \node[circle, draw, scale=.5] (1) at (0,1){};
           \node[circle, draw, scale=.5] (2) at (0,2){};
           \node[circle, draw, scale=.5] (3) at (0,3){};
           \node[circle, draw, scale=.5] (4) at (0,4){};
           \node[circle, draw, scale=.5] (5) at (0,5){};
           \node[circle, draw, scale=.5] (01) at (1,0){};
           \node[circle, draw, scale=.5] (21) at (1,2){};
           \node[circle, draw, scale=.5] (41) at (1,4){};
           \node[circle, draw, scale=.5] (10) at (-1,0){};
           \node[circle, draw, scale=.5] (12) at (-1,2){};
           \node[circle, draw, scale=.5] (14) at (-1,4){};
           \node (x) at (1, 1.5){};
           \node (y) at (1,3.5){};
           \node (x0) at (-1, 1.5){$c_1$};
           \node (y0) at (-1,3.5){$c_2$};
        \draw (1)--(2);
        \draw (2)--(3);
        \draw (0)--(1);
        \draw (3)--(4);
        \draw (4)--(5);
        \draw (0)--(01);
        \draw (0)--(10);
        \draw (2)--(21);
        \draw (2)--(12);
        \draw (4)--(41);
        \draw (4)--(14);
        \draw[dashed] (y)--(y0);
        \draw[dashed] (x)--(x0);
       \end{tikzpicture}
       \caption{A tree $T$ such that $\mathcal{H}^T$ has two elements which commute.}
       \label{HT}
   \end{figure}
where the root is the black vertex. Let $c_1$ and $c_2$ be cuts of the tree and take the elements $e_i=(T,c_i)\in X_2$. Then $ d_0(e_1)= d_2(e_2) =T_1$ and $d_0(e_2)=d_2(e_1)=T_2$ thus $1_{T_1}\ast 1_{T_2}=T= 1_{T_2}\ast 1_{T_1}.$ 
\end{example}
   
\begin{prop}\label{01}
    For the Hall algebra $\mathcal H^T$ associated to a labelled rooted tree $T$, all multiplications are of the form $x \cdotp y = \varepsilon z$, where $\varepsilon = 0,1$.
\end{prop}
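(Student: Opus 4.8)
The plan is to read the statement directly off Proposition~\ref{prop:comp}. Here $x$, $y$, $z$ are understood to be basis vectors of $\mathcal H^T$, that is, of the form $1_b$ for $b\in X_1^T$; since multiplication in $\mathcal H^T$ is bilinear, it is determined by the products $1_b\ast 1_{b'}$, so it suffices to prove that each such product is either $0$ or a single basis vector $1_{b''}$.

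First I would unwind Definition~\ref{defhall}: we have
\[ 1_b\ast 1_{b'}=\sum_{b''\in X_1^T}c_{bb'}^{b''}\,1_{b''},\qquad c_{bb'}^{b''}=\#\{c\in X_2^T: d_0(c)=b,\ d_2(c)=b',\ d_1(c)=b''\}. \]
Thus $\sum_{b''}c_{bb'}^{b''}$ equals the cardinality of the fibre $(d_0,d_2)^{-1}(b,b')\subseteq X_2^T$, and every element of that fibre contributes $1$ to exactly one coefficient, namely the one indexed by its image under $d_1$.

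Next I would apply Proposition~\ref{prop:comp}: since $T$ is labelled, the fibre $(d_0,d_2)^{-1}(b,b')$ is either empty or a singleton $\{c\}$. In the first case every $c_{bb'}^{b''}$ vanishes and $1_b\ast 1_{b'}=0$, which is of the required form with $\varepsilon=0$. In the second case, writing $b'':=d_1(c)$, we get $c_{bb'}^{b''}=1$ and $c_{bb'}^{b'''}=0$ for $b'''\neq b''$, so $1_b\ast 1_{b'}=1_{b''}$, of the required form with $\varepsilon=1$ and $z=1_{b''}$. This finishes the argument.

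There is essentially no obstacle here: all the content is already contained in Proposition~\ref{prop:comp}, which supplies both injectivity of the Segal map and the singleton-fibre description in the labelled case. The only point worth spelling out is that a single element of the fibre feeds into just one structure constant, because $d_1$ sends it to a well-defined single $1$-simplex, so its contribution cannot be spread across several terms of the sum.
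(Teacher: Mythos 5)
Your argument is correct and is essentially the paper's own proof: the paper simply cites Proposition~\ref{prop:comp} for uniqueness of composites in the labelled case, and you have spelled out the same deduction in detail by identifying the structure constants with the fibres of $(d_0,d_2)$. Nothing is missing.
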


\begin{proof}
In the labelled context composites are unique if they exist, as was shown in Proposition \ref{prop:comp}.
\end{proof}

Since by Proposition \ref{01}, every coefficient in a multiplication is zero or one, we can depict the multiplication via a directed graph $G(\mathcal{H}^T)$, where the vertices are the basis vectors, and there exists a directed edge $(1_a, 1_b)$ if $1_a \ast 1_b\neq 0$. In this way, we can see which elements do not commute with each other, and in particular, for the Hall algebra to be commutative, every arrow must be symmetric. 

\begin{example}
    Recall the tree $T=Y_{3}$ that has one vertex of degree three and three vertices of degree one. Fix as a root a vertex of degree one. When the tree is unlabelled and nonplanar, the elements of $X^T_1$ are $T, F_1, P_2, P_3^1, P_3^2$, $F_2$, and $\varnothing$ where $P_i$ denotes the path on $i$ vertices, $F_i$ denotes the forest with no edges and $ P_3^i$ is the path on three vertices having as a root a vertex of degree $i$. Then on the left-hand side of Figure \ref{GH} 
    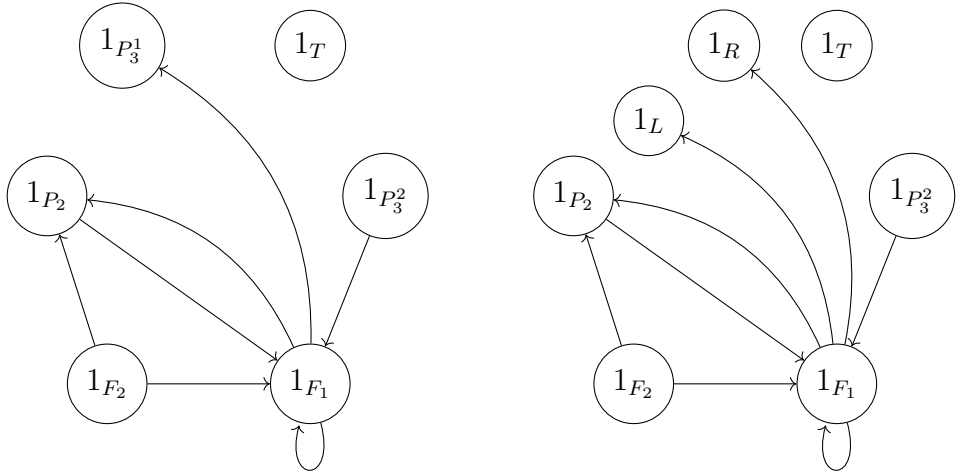
\begin{figure}[ht]
    \centering
    \begin{tikzpicture}
    \node[draw, circle] (01) at (4, 1.5){$1_{F_1}$};
    \node[draw, circle] (0) at (1.3, 1.5){$1_{F_2}$};
    \node[draw, circle] (23) at (0.5, 4){$1_{P_2}$};
    \node[draw, circle] (456) at (5, 4){$1_{P_3^2}$};
    \node[draw, circle] (78) at (1.5, 6){$1_{P_3^1}$};
    \node[draw, circle] (T) at (4,6){$1_T$};
    
    \path[loop below] (01) edge (01); 
    \draw[->](0) to (01); 
    \draw[->](23) to (01); 
    \draw[bend right,  ->](01) to (78); 
    \draw[->](456) to (01); 
    \draw[->] (0) to (23);
    \draw[bend right, ->](01) to (23);
    
    \node[draw, circle] (01x) at (11, 1.5){$1_{F_1}$};
    \node[draw, circle] (0x) at (8.3, 1.5){$1_{F_2}$};
    \node[draw, circle] (23x) at (7.5, 4){$1_{P_2}$};
    \node[draw, circle] (456x) at (12, 4){$1_{P_3^2}$};
    \node[draw, circle] (78x) at (8.5, 5){$1_{L}$};
    \node[draw, circle] (78y) at (9.5, 6){$1_{R}$};
    \node[draw, circle] (Tx) at (11,6){$1_T$};
    
    \path[loop below] (01x) edge (01x); 
    \draw[->](0x) to (01x); 
    \draw[->](23x) to (01x); 
    \draw[bend right,  ->](01x) to (78x); 
    \draw[bend right,  ->](01x) to (78y); 
    \draw[->](456x) to (01x); 
    \draw[->] (0x) to (23x);
    \draw[bend right, ->](01x) to (23x);
    \end{tikzpicture}
    \caption{The graphs $G(\mathcal{H}^T)$ and $G(\mathcal{H}^{T_p})$} \label{GH}
    \end{figure}
    we can see the graph $G(\mathcal{H}^T)$. If we consider the same unlabelled tree $T_p$ with its planar structure, we obtain the same elements in $X_1$, together with another tree which is also a path on three vertices with a root of degree one. Denote these two trees by $L$ and $R$ respectively. Then we can see on the right-hand side of Figure \ref{GH} the graph $G(\mathcal{H}^{T_p})$. As we can see, in both cases the only two elements different than $1_{\varnothing}$ which could commute with each other are $1_{F_1}$ and $1_{P_2}.$ However notice that $1_{P_2}\ast 1_{F_1}= 1_{P_3^1}+1_{P_3^2}$ while $1_{F_1}\ast 1_{P_2}=1_{P_{3}^1}$.
    
    If we consider the case where the tree is labelled, no two elements other than $1_\varnothing$ can commute.  
\end{example}

Let us now compare the Hall algebra $\mathcal H^T$ associated to a tree $T$ to the Hall algebra $\mathcal H^G$ associated to its underlying graph $G=U(T)$.  The following results are not difficult to check.  

\begin{prop}\label{thm:graph-hall}
Let $X^G$ be the 2-Segal set associated to a finite graph $G$.  The associated Hall algebra $\mathcal H^G=\mathcal H(X^G)$ is characterized as follows.
\begin{enumerate}
\item \label{hallunit} 
The element $1_\varnothing$ is the multiplicative identity, so $1_H \ast 1_\varnothing = 1_H = 1_\varnothing\ast 1_H$ for every subgraph $H$ of $G$.

\item \label{halldisjoint}
If $H \cap K \neq \varnothing$, then $1_H \ast 1_K = 0$.

\item \label{vertmult} 
If $a$ and $b$ are distinct vertices of $G$, then 
\[ 1_a \ast 1_b = \sum_H 1_H \]
where $H$ is a graph for which $v(H) = \{a, b\}$.  If there are $n$ edges between $a$ and $b$ in $G$, then there are $2^n$ such subgraphs $H$.

\item \label{generalmult} 
More generally, if $H \cap K = \varnothing$ for two nonempty subgraphs $H$ and $K$ of $G$, then 
\[ 1_H \ast 1_K = \sum_J 1_J \] 
where $J$ is a graph with:
\begin{itemize}
\item the disjoint union $H \union K$ as a subgraph, and

\item either $J= H \union K$ or the difference between $J$ and $H \union K$ consists of edges connecting vertices of $H$ with vertices of $K$.
\end{itemize}

\item \label{comm} It is commutative, so $1_H \ast 1_K = 1_K \ast 1_H$ for any subgraphs $H$ and $K$ of $G$.
\end{enumerate}
\end{prop}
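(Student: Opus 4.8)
The plan is to reduce the whole proposition to one computation of the structure constants $c_{bb'}^{b''}$ of the $2$-Segal set $X^{G}$, and then to read off (1)--(5) as special cases. First I would unwind the face maps $d_{0},d_{1},d_{2}\colon X^{G}_{2}\to X^{G}_{1}$ on an element $(H;S_{1},S_{2})$ from the definition of $X^{G}$: one gets $d_{2}(H;S_{1},S_{2})=H\setminus S_{2}$ (the subgraph of $H$ spanned by $S_{1}$), $d_{0}(H;S_{1},S_{2})=H\setminus S_{1}$ (the subgraph spanned by $S_{2}$), and $d_{1}(H;S_{1},S_{2})=H$. Hence, for fixed subgraphs $b,b',b''$ of $G$, a $2$-simplex $(H;S_{1},S_{2})$ with $d_{0}=b$, $d_{2}=b'$, $d_{1}=b''$ must have $H=b''$, $S_{1}=V(b')$, $S_{2}=V(b)$; such a simplex exists --- and is then unique --- exactly when $V(b)\cap V(b')=\varnothing$, $V(b'')=V(b)\sqcup V(b')$, and $b''$ restricted to $V(b)$ equals $b$ while $b''$ restricted to $V(b')$ equals $b'$. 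So $c_{bb'}^{b''}\in\{0,1\}$, equal to $1$ precisely under those three conditions; this is the only step with any content.

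Items (1)--(4) are then immediate specializations. For (1), take $b'$ to be the empty subgraph $\varnothing$: the conditions force $b''=b$, so $1_{H}\ast 1_{\varnothing}=1_{H}$, and symmetrically $1_{\varnothing}\ast 1_{H}=1_{H}$; since $s_{0}$ of the unique $0$-simplex is the empty subgraph, $1_{\varnothing}$ is the unit of Definition \ref{defhall}. For (2), if $H\cap K\neq\varnothing$ then $V(H)\cap V(K)\neq\varnothing$, so no $b''$ satisfies $V(b'')=V(H)\sqcup V(K)$ and the product is $0$. For (3), with the first factor the one-vertex subgraph on $a$ and the second the one-vertex subgraph on $b$ (for distinct vertices $a,b$), the two restriction conditions are automatic, so $1_{a}\ast 1_{b}=\sum 1_{b''}$ over all subgraphs $b''$ with vertex set $\{a,b\}$; such a $b''$ can contain only edges joining $a$ and $b$ (a loop at $a$ would violate $b''$ restricted to $\{a\}$ being $\{a\}$), and each of the $n$ of these is independently present or absent, giving $2^{n}$ terms. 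For (4), $b''$ restricted to $V(H)$ being $H$ and to $V(K)$ being $K$ says exactly that $b''$ contains the disjoint union $H\sqcup K$ and has no further edge inside $V(H)$ or inside $V(K)$, i.e. the edges of $b''$ not in $H\sqcup K$ all join $V(H)$ to $V(K)$; summing over all such $b''$ gives the stated formula.

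For (5), commutativity, I would exhibit the bijection $(H;S_{1},S_{2})\leftrightarrow(H;S_{2},S_{1})$ of $X^{G}_{2}$, which interchanges $d_{0}$ and $d_{2}$ while fixing $d_{1}$; it restricts to a bijection between the simplices counted by $c_{bb'}^{b''}$ and those counted by $c_{b'b}^{b''}$, so $1_{b}\ast 1_{b'}=1_{b'}\ast 1_{b}$. (Equivalently, the three conditions in the lemma above are visibly symmetric in $b$ and $b'$.) I do not expect a genuine obstacle --- the whole statement is bookkeeping --- but the one point to be careful about is that ``the subgraph spanned by $S$'' means the \emph{induced} subgraph on $S$, so that both $d_{0}$ and $d_{2}$ impose a no-extra-edges constraint; keeping track of this is precisely what makes the characterization in (4) and the count $2^{n}$ in (3) come out on the nose.
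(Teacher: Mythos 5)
Your proposal is correct. The paper offers no proof of this proposition (it is dismissed with ``the following results are not difficult to check''), so there is nothing to compare against; your argument --- computing the structure constants $c_{bb'}^{b''}$ once, by observing that a $2$-simplex $(H;S_1,S_2)$ with prescribed faces is forced to have $H=b''$, $S_1=V(b')$, $S_2=V(b)$, and exists precisely when $V(b)\cap V(b')=\varnothing$, $V(b'')=V(b)\sqcup V(b')$, and $b''$ induces $b$ and $b'$ on the two parts --- is exactly the verification the authors leave to the reader, and items (1)--(5) do follow as the specializations you describe. Your attention to the fact that ``spanned'' means the induced subgraph (so that both $d_0$ and $d_2$ impose a no-extra-edges constraint) is indeed the one point where care is needed, and it is what makes the $2^n$ count in (3) and the characterization in (4) come out correctly.
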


\begin{example} 
    Consider the labelled graph $G$ on three vertices  with vertex set $\{a,b,c\}$ and edge set $\{(a,b), (b,c)\}$.  Then the non-zero products in the commutative Hall algebra $\mathcal H^G$ are $\varnothing*H=H$ and  
    \[ \begin{array}{rclrcl}
        \bullet_a \ast \bullet_b &=& 
        \bullet_a \bullet_b + \edge ab  &
        \bullet_a\bullet_c \ast \;\bullet_b &=&\bullet_a\bullet_b\bullet_c+\edge ab \bullet_c+{}_a\!\!\bullet\;\edge bc+\,G  \\ 
        \bullet_a \ast\bullet_c &=&\bullet_a \:\bullet_c
        & \bullet_a\bullet_b\ast \;\bullet_c&=&\bullet_a\bullet_b\bullet_c+\;{}_a\!\!\bullet\!\edge bc \\ 
        \bullet_b \ast \bullet_c &=& \bullet_b \bullet_c + \edge b c &   
        \bullet_a \ast \bullet_b  \,\bullet_c&=&
        \bullet_a\bullet_b\bullet_c + \edge ab \bullet_c \\
        \edge ab \ast \,\bullet_c&=& \!\!\!\!\edge ab \bullet_c+ G
        \qquad
        & \bullet_a \ast\!\! \edge bc &=&\!\!{}_a\!\!\bullet \edge bc+\,G.
    \end{array} \]
\end{example}

The following result and its linear dual follow from \cite[\S 8]{dec}.

\begin{prop}
Suppose $F\colon X\to Y$ is a simplicial map between reduced 2-Segal sets.
\begin{enumerate}
    \item If $F$ is CULF then the linear map $F^* \colon \mathcal H(Y)\to \mathcal H(X)$ given by 
    \[ F^*(1_y)\;=\sum_{F_1(x)=y}1_x\]
    defines a homomorphism of Hall algebras. 
    
    \item If $F$ is relatively Segal then the linear map ${F}_* \colon \mathcal H(X)\to \mathcal H(Y)$ given by
    \[{F}_*(1_x)\;=\;1_{F_1(x)}\]
    defines a homomorphism of Hall algebras. 
\end{enumerate} 
\end{prop}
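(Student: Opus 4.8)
The plan is to verify each claim directly from the Hall algebra multiplication formula of Definition \ref{defhall}, using the pullback characterizations of CULF (Definition \ref{culfdef}) and relatively Segal (Definition \ref{relSegal}) maps to control the fibres of $F_2$. First I would fix notation: for $b,b'\in X_1$, write $c^{b''}_{bb'}$ for the structure constants of $\mathcal H(X)$, and $\bar c^{\bar b''}_{\bar b\bar b'}$ for those of $\mathcal H(Y)$; these count elements of $X_2$, respectively $Y_2$, with prescribed $d_0$, $d_1$, $d_2$. Both maps $F^*$ and $F_*$ are linear by construction, so in each case the only thing to check is compatibility with multiplication and with the unit. The unit is easy: since $F$ is a simplicial map it sends the unique point of $X_0$ to the unique point of $Y_0$, hence commutes with $s_0$, so $F_*(1_{s_0\ast_X}) = 1_{s_0\ast_Y}$ and, because $F^*(1_{s_0\ast_Y})$ sums over the fibre of $F_1$ which (since $F$ is CULF, hence conservative on the relevant fibres) is the singleton $\{s_0\ast_X\}$, we get $F^*(1_{s_0\ast_Y}) = 1_{s_0\ast_X}$.

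For part (1), I would expand $F^*(1_{b''}) \ast F^*(1_{b'''})$ for $b'', b''' \in Y_1$ and compare with $F^*(1_{b''} \ast 1_{b'''})$. Unwinding, the coefficient of $1_{x_1}$ ($x_1 \in X_1$) on the left is the number of $c \in X_2$ with $d_0 c, d_2 c$ lying in the $F_1$-fibres of $b'', b'''$ and $d_1 c = x_1$; on the right it is $\sum_{F_1(x_1) = ?}$ ... more precisely the coefficient of $1_{x_1}$ is $c^{F_1(x_1)}_{b''b'''}$ when $F_1(x_1)$ has the right $d_0,d_2$ data. The heart of the matter is that the CULF square for $d_1\colon X_2\to X_1$ (equivalently, the square in \cite[Lemma 4.1]{dec}, already displayed in the excerpt) says that $X_2 \to X_1 \times_{Y_1} Y_2$ is a bijection; this lets me translate ``count $c\in X_2$ over a given $x_1\in X_1$ with specified image in $Y_2$'' into ``count $\bar c\in Y_2$ over $F_1(x_1)$''. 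Combined with the observation that $d_0 c$ and $d_2 c$ are then forced to lie in the correct $F_1$-fibres precisely when $d_0\bar c = b''$ and $d_2\bar c = b'''$ (using that $F$ commutes with $d_0$ and $d_2$), this matches the two coefficients. Some care is needed to make sure the CULF pullbacks are being invoked only for $0<i<n$ where they are asserted, but for $n=2$ the only interior face is $d_1$, which is exactly what is needed.

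For part (2), I would instead expand $F_*(1_{x} \ast 1_{x'})$ and compare with $F_*(1_x)\ast F_*(1_{x'})$. The left side is $\sum_{x''} c^{x''}_{xx'} 1_{F_1(x'')}$, and the right side has coefficient of $1_{\bar x''}$ equal to $\bar c^{\bar x''}_{F_1(x)F_1(x')}$, the number of $\bar c\in Y_2$ with $d_0\bar c = F_1(x)$, $d_2\bar c = F_1(x')$, $d_1\bar c = \bar x''$. The relatively Segal condition at $n=2$ says the square whose vertical maps are the Segal maps $(d_0,d_2)\colon X_2\to X_1\times X_1$ and $(d_0,d_2)\colon Y_2\to Y_1\times Y_1$ is a pullback, i.e. $X_2 \cong Y_2 \times_{Y_1\times Y_1} (X_1\times X_1)$. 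Hence for fixed $(x,x')$ the elements $c\in X_2$ with $(d_0c,d_2c) = (x,x')$ are in bijection with the $\bar c\in Y_2$ with $(d_0\bar c, d_2\bar c) = (F_1(x),F_1(x'))$, and under this bijection $d_1 c$ maps to $d_1\bar c$ because $F$ is simplicial; summing over the fibre of $F_1$ then gives the claimed equality of coefficients. The unit is handled as above; for $F_*$ one only needs $F_*(1_{s_0\ast_X}) = 1_{F_1 s_0 \ast_X} = 1_{s_0\ast_Y}$.

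The main obstacle I anticipate is purely bookkeeping: keeping straight, in part (1), which fibres are being summed over and confirming that the CULF pullback square for $d_1$ genuinely realizes $X_2$ as the fibre product $X_1\times_{Y_1}Y_2$ in the form needed — in particular that it does so compatibly with the \emph{outer} face maps $d_0,d_2$, which are not themselves required to give pullback squares. One clean way around this is to invoke the standard fact (essentially the content of \cite[\S 8]{dec}) that for a CULF map the induced map on Hall algebras respects the multiplication, reducing the work to citing that reference and merely pointing out that $X^T\to X^G$ is covered by its hypotheses; similarly the relatively Segal case is the ``linear dual'' statement. Indeed, since the problem statement already says ``The following result and its linear dual follow from \cite[\S 8]{dec}'', the cleanest proof is to note exactly this: part (1) is \cite[\S 8]{dec} applied verbatim, and part (2) follows by the dual argument, replacing the cut-merging face map $d_1$ and its CULF pullback by the outer faces $(d_0,d_2)$ and the relatively Segal pullback, with the roles of ``pull back along fibres of $F_1$'' and ``push forward along $F_1$'' interchanged.
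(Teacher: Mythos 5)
Your proposal is correct, but it is worth noting that the paper itself offers no proof of this proposition at all: it is stated bare, prefaced only by the remark that ``the following result and its linear dual follow from \cite[\S 8]{dec}.'' So your fallback option of citing \cite[\S 8]{dec} is exactly what the paper does, while your direct verification is a genuinely more self-contained route that fills in what the paper leaves implicit. The substance of your argument checks out: for part (1) the CULF pullback at $n=2$, $i=1$ identifies $X_2$ with $X_1\times_{Y_1}Y_2$ over $d_1$, so for fixed $x''\in X_1$ the count of $c\in X_2$ with $d_1c=x''$ and $F_1(d_0c)=b''$, $F_1(d_2c)=b'''$ equals the count of $\bar c\in Y_2$ with $d_1\bar c=F_1(x'')$, $d_0\bar c=b''$, $d_2\bar c=b'''$, and compatibility with the outer faces is automatic from $F$ being simplicial (your worry here is legitimate but resolves exactly as you say); for part (2) the relatively Segal square at $n=2$ identifies the fibre of $(d_0,d_2)$ over $(x,x')$ with the fibre of $(d_0,d_2)$ over $(F_1x,F_1x')$, compatibly with $d_1$, which is precisely the equality of structure constants needed. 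Your treatment of the unit is also right, and correctly uses that the degeneracy square $s_0\colon X_0\to X_1$ is among the CULF pullbacks in Definition \ref{culfdef}, so the $F_1$-fibre over $s_0\ast_Y$ is the singleton $\{s_0\ast_X\}$. What the direct argument buys is transparency about exactly which pullback squares are used (only the $n=2$ instances, in both cases); what the citation buys is brevity and consistency with the general decomposition-space framework of \cite{dec}, where these functoriality statements are proved for arbitrary decomposition spaces rather than just 2-Segal sets.
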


That is, the Hall algebra construction is contravariantly functorial on CULF maps and covariantly functorial on relatively Segal maps of reduced 2-Segal sets.

Let $T$ be a rooted tree and $G$ its underlying undirected graph, and consider the associated simplicial map $U \colon X^T\to X^G$ as described in Lemma~\ref{lem:tree-graph-simplicial}.  This map does not necessarily induce an algebra homomorphism in either direction between the associated Hall algebras $\mathcal H^T$ and $\mathcal H^G$, as we saw above that $U$ is neither CULF nor relatively Segal. 

\begin{example}
If $T$ and $G$ are the tree and underlying graph displayed in \eqref{eq:TG-not-culf} then 
\[  
1_{\{b\}}*1_{\{d\}}
=0\;\text{ in }\;\mathcal H^T\qquad
1_{\{b\}}*1_{\{d\}}
=1_{\{b,d\}}\;\text{ in }\;\mathcal H^G, \]
since for the multiplication in $\mathcal H^T$ there are no 
admissible subforests of $T$ with vertex set $\{b,d\}$.
\end{example}

Let us examine the structure constants of the two Hall algebras $\mathcal H^T$ and $\mathcal H^G$ in more detail. For $F,F',F''\in X^T_1$ and $H,H',H''\in X^G_1$, write $T_{F,F'}^{F''}$ and $G_{H,H'}^{H''}$ for the subsets  (which in the labelled context must be either empty or singletons)  of those elements $\tau\in X^T_2$ and $\gamma\in X^G_2$ with $(d_0\tau,d_1\tau,d_2\tau)=(F,F'',F')$ and $(d_0\gamma,d_1\gamma,d_2\gamma)=(H,H'',H')$ respectively, so that
\[ 1_{F} \ast 1_{F'} = \sum_{F'' \in X^T_1} |T_{F,F'}^{F''}| \cdot 1_{F''} 
\qquad\text{ and }\qquad 1_{H} \ast 1_{H'} = \sum_{H'' \in X^G_1} |G_{H,H'}^{H''}|\cdot 1_{H''} \]
Since $U\colon X^G\to X^T$ is a simplicial map it defines functions 
\[U_{F,F'}^{F''}\colon T_{F,F'}^{F''}\to G_{UF,UF'}^{UF''}.\]
If these maps were bijections, then $U\colon \mathcal H^T\to\mathcal H^G$ would be an algebra homomorphism. 

\section{The associated invertible operad and cooperad} \label{operad}
	
There exists a deep connection between the theory of operads and 2-Segal sets.  In this section, we describe how to associate a coloured operad to the simplicial set $X^T$ obtained from a rooted tree $T$ . We begin with the notion of coloured operad from \cite[\S 3.6]{dk}.
	
\begin{definition}\label{defop} 
	Let $\mathfrak{C}$ be a set and let $(\mathcal{V}, \otimes, I)$ be a symmetric monoidal category.  A \emph{(non-symmetric) $\mathfrak{C}$-coloured operad} in $\mathcal{V}$ is given by the following data:
	\begin{enumerate}
		\item for each $n \geq 0$ and $(n+1)$-tuple of colours $(c_1,\dots,c_n|c_0)$ with $c_i\in \mathfrak{C}$, an object
		\[ O(c_1,\dots,c_n \mid c_0) \in \mathcal{V}; \]
        
        \item  for each colour $c$, a unit map $I_c \colon I \to O(c \mid c)$; and
        
		\item  for each $(n+1)$-tuple of colours $(c_1,\dots,c_n \mid c_0)$ and $k_i$-tuples $(c^i_1, \dots, c^i_{k_i})$ for $1\leq i \leq n$, an appropriately associative and unital composition map
        {\small
        \begin{align*}	
            O(c_1,\dots,c_n \mid c_0)\otimes O(c_{1}^1, \dots, c^{1}_{k_1} \mid c_1) \otimes \dots \otimes O(c^n_1, \dots, c^n_{k_n} \mid c_n) \to O(c_1^1, \dots, c^1_{k_1}, \dots, c^n_1, \dots, c^n_{k_n} \mid c_0).
        \end{align*} }
    \end{enumerate}
	
    A \emph{(non-symmetric) $\mathfrak{C}$-coloured cooperad} in $\mathcal{V}$ is the formal dual of an operad. More precisely, it is given by the data (1) as in Definition \ref{defop} along with 
    \begin{itemize}
    	\item[($2'$)] for each colour $c$, a counit map $\varepsilon_c \colon O(c \mid c) \to I$; 
     
    	\item[($3'$)] for each $(n+1)$-tuple of colours $(c_1,\dots,c_n)$ and $k_i$-tuples $(c'_{i,1}, \dots, c'_{i,k_i})$ for $1\leq i \leq n$, an appropriately coassociative and counital cocomposition map
            {\small
            \[  O(c_1^1, \dots,c^1_{k_1},\dots,c^n_1,\dots, c^n_{k_n} \mid c_0) \to  O(c_1, \dots, c_n \mid c_0) \otimes O(c_{1}^1, \dots, c^{1}_{k_1} \mid c_1) \otimes \dots \otimes O(c^n_1, \dots, c^n_{k_n} \mid c_n). \] } 
    \end{itemize}
\end{definition}
 
\begin{definition}
	A (co)operad is \emph{invertible} when the (co)unit and (co)composition maps are bijective.
\end{definition}
	
Given an invertible operad $\mathcal{O}$ the inverse of the unit and composition maps define a cooperadic structure on the same class of objects.  Similarly given an invertible cooperad we can use the inverse of the counit and cocomposition maps to view this structure as an operad.  In this way we may view invertible (co)operads as both operads and cooperads.  

To begin showing how the theory of operads connects to simplicial sets we construct a cooperad from the standard simplices $\Delta[n]$.

\begin{example} \cite[Example 3.6.3]{dk}
	The collection of standard simplices $\lbrace \Delta(n) \rbrace_{n \geq 0}$ forms a 1-coloured operad in $(\SSet, \union,\varnothing)$.  The operadic composition maps
	\[ \nu_{k_1, \dots ,k_n} \colon \Delta[n] \union \Delta[k_1] \union \dots \union \Delta[k_n] \to \Delta[k_1+\dots+k_n] \]
	are defined as follows. The $0$-th vertex of $\Delta[n]$ is mapped to the $0$-th vertex of $\Delta[k_1+\dots+k_n]$, the $i$-th vertex of $\Delta[n]$ is mapped to the vertex of $\Delta[k_1+\dots+k_n]$ labelled by $k_1+\dots +k_i$, and the $j$-th vertex of $\Delta[k_i]$ is mapped into the vertex of $\Delta[k_1+\dots+k_n]$ labelled by $k_1+\dots+k _{i-1}+j$.
\end{example}

This operad from standard simplices allows us to build a cooperad from any simplicial set $X$. Since $\Hom(\Delta[n],X)$ is naturally identified with $X_n$, the maps $\nu_{k_1, \dots, k_n}$ define a cooperadic cocomposition
\[ f_{k_1,\dots , k_n} \colon X_{k_1+\dots+k_n} \to X_n \times X_{k_1} \times \dots \times X_{k_n}, \]
and the collection of sets $(X_n)$ forms a 1-coloured operad in $(\Set, \times, 1)$.
Using this cocomposition we can build the following  $X_1$-coloured cooperad.

\begin{example}(\cite[Example 3.6.4]{dk})
For any simplicial set $X$, there is an $X_1$-coloured cooperad $\mathcal{Q}_X$ with
{\small
\begin{align*}
\mathcal{Q}_X(c_1, \dots, c_n | c_0)& = \lbrace  x  \in  X_n \mid \partial_{\lbrace 0,1 \rbrace}(x) = c_1,\partial_{\lbrace 1,2 \rbrace}(x) = c_2, \dots,\partial_{\lbrace n-1, n \rbrace}(x)= c_n, \partial_{\lbrace 0,n \rbrace}(x) = c_0\rbrace.
\end{align*} }
The cocomposition of this cooperad is inherited from the cocomposition maps $f_{k_1, \dots, k_n}$ obtained from the operad of standard simplices.
\end{example}
	
In \cite{dk}, Dyckerhoff and Kapranov show that the cocomposition maps of $\mathcal{Q}_X$ factor through the 2-Segal map and that the cooperad is invertible if and only if $X$ is 2-Segal, so that  the category of 2-Segal sets is equivalent to the category of invertible operads in $(\Set,\times, 1)$. In this sense we may view the cooperad associated to a 2-Segal set as either a cooperad or an operad.  In particular we may view the 2-Segal set $X^T$ associated to a rooted tree $T$ as an operad $\mathcal{Q}_{X^T}$.
 
\begin{example}
Let $T$ be a rooted tree. The \emph{invertible (co)operad associated to a rooted tree} $\mathcal{Q}_{X^T}$ is defined by the following data.  The colour set is $X^T_1$, and $\mathcal{Q}_{X^T}(c_1, \dots, c_n \mid c_0)$ is the set of layerings of admissible subforests by $n-1$ cuts, $H=L_0\supseteq L_1\supseteq\dots\supseteq L_n=\varnothing$, such that $c_0=H$ and $c_i = L_{i-1} \setminus L_i$ for $1\leq i \leq n$. The composition map 
\begin{align*}
    \nu_{k_1,\dots,k_n}\colon\mathcal{Q}_{X^T}(c_1, \dots, c_n \mid c_0) \times \mathcal{Q}_{X^T}(c^1_1, \dots,c^1_{k_1} \mid c_1) \times & \dots  \times \mathcal{Q}_{X^T}(c^n_1, \dots, c^n_{k_n} \mid c_n) \\
    &\to \mathcal{Q}_{X^T}(c_1^1, \dots, c^1_{k_1}, \dots, c^n_1, \dots, c^n_{k_n} \mid c_0)
\end{align*}
is defined by the following assignment, where each $H_i$ is the admissible subforest defined by  
$L_{i-1}\setminus L_i$:
{\small
 \begin{align*}
	& \left(\; (H \supseteq L_1\supseteq\cdots\supseteq L_n\!=\!\varnothing),\; \;
    (H_1 \supseteq L^1_1\supseteq\cdots\supseteq  L^1_{k_1}\!=\!\varnothing), \dots, (H_n \supseteq L^n_1\supseteq \cdots\supseteq L^n_{k_n}\!=\!\varnothing) \;\right) \\
	& \quad\mapsto\;(
    H\supseteq 
    L_1\!\union \!L^1_1\supseteq\cdots\supseteq 
    L_1 \supseteq 
    L_2\!\union \!L^2_1\supseteq\cdots\supseteq 
    L_2\supseteq\cdots \supseteq
    L_{n-1}\supseteq L^n_1\supseteq \cdots\supseteq L^n_{k_n}\!=\!\varnothing).
\end{align*}}
We illustrate the composition map in Figure \ref{treeoperad}. 
\begin{figure}
		\centering
\tikzset{every picture/.style={line width=0.75pt}} 

\begin{tikzpicture}[x=0.75pt,y=0.75pt,yscale=-1,xscale=1]

\draw    (85,123.79) -- (113.33,152.13) ;
\draw [shift={(113.33,152.13)}, rotate = 45] [color={rgb, 255:red, 0; green, 0; blue, 0 }  ][fill={rgb, 255:red, 0; green, 0; blue, 0 }  ][line width=0.75]      (0, 0) circle [x radius= 3.35, y radius= 3.35]   ;
\draw [shift={(83.33,122.13)}, rotate = 45] [color={rgb, 255:red, 0; green, 0; blue, 0 }  ][line width=0.75]      (0, 0) circle [x radius= 3.35, y radius= 3.35]   ;
Straight Lines [id:da8615763379870922] 
\draw    (142.64,123.76) -- (115.02,150.49) ;
\draw [shift={(113.33,152.13)}, rotate = 135.94] [color={rgb, 255:red, 0; green, 0; blue, 0 }  ][line width=0.75]      (0, 0) circle [x radius= 3.35, y radius= 3.35]   ;
\draw [shift={(144.33,122.13)}, rotate = 135.94] [color={rgb, 255:red, 0; green, 0; blue, 0 }  ][line width=0.75]      (0, 0) circle [x radius= 3.35, y radius= 3.35]   ;
Straight Lines [id:da7678790785475034] 
\draw    (54.97,92.81) -- (81.7,120.44) ;
\draw [shift={(83.33,122.13)}, rotate = 45.94] [color={rgb, 255:red, 0; green, 0; blue, 0 }  ][line width=0.75]      (0, 0) circle [x radius= 3.35, y radius= 3.35]   ;
\draw [shift={(53.33,91.13)}, rotate = 45.94] [color={rgb, 255:red, 0; green, 0; blue, 0 }  ][line width=0.75]      (0, 0) circle [x radius= 3.35, y radius= 3.35]   ;
Straight Lines [id:da7000329099658564] 
\draw    (55.02,89.49) -- (81.64,63.76) ;
\draw [shift={(83.33,62.13)}, rotate = 315.97] [color={rgb, 255:red, 0; green, 0; blue, 0 }  ][line width=0.75]      (0, 0) circle [x radius= 3.35, y radius= 3.35]   ;
\draw [shift={(53.33,91.13)}, rotate = 315.97] [color={rgb, 255:red, 0; green, 0; blue, 0 }  ][line width=0.75]      (0, 0) circle [x radius= 3.35, y radius= 3.35]   ;
Straight Lines [id:da6307303713716845] 
\draw    (51.64,89.49) -- (25.02,63.76) ;
\draw [shift={(23.33,62.13)}, rotate = 224.03] [color={rgb, 255:red, 0; green, 0; blue, 0 }  ][line width=0.75]      (0, 0) circle [x radius= 3.35, y radius= 3.35]   ;
\draw [shift={(53.33,91.13)}, rotate = 224.03] [color={rgb, 255:red, 0; green, 0; blue, 0 }  ][line width=0.75]      (0, 0) circle [x radius= 3.35, y radius= 3.35]   ;
Straight Lines [id:da0859668274701304] 
\draw    (145.97,120.44) -- (172.7,92.81) ;
\draw [shift={(174.33,91.13)}, rotate = 314.06] [color={rgb, 255:red, 0; green, 0; blue, 0 }  ][line width=0.75]      (0, 0) circle [x radius= 3.35, y radius= 3.35]   ;
\draw [shift={(144.33,122.13)}, rotate = 314.06] [color={rgb, 255:red, 0; green, 0; blue, 0 }  ][line width=0.75]      (0, 0) circle [x radius= 3.35, y radius= 3.35]   ;
Straight Lines [id:da9010792159735723] 
\draw    (146.91,60.87) -- (172.76,89.38) ;
\draw [shift={(174.33,91.13)}, rotate = 47.82] [color={rgb, 255:red, 0; green, 0; blue, 0 }  ][line width=0.75]      (0, 0) circle [x radius= 3.35, y radius= 3.35]   ;
\draw [shift={(145.33,59.13)}, rotate = 47.82] [color={rgb, 255:red, 0; green, 0; blue, 0 }  ][line width=0.75]      (0, 0) circle [x radius= 3.35, y radius= 3.35]   ;
Straight Lines [id:da6323590657247875] 
\draw    (204.67,60.79) -- (176,89.46) ;
\draw [shift={(174.33,91.13)}, rotate = 135] [color={rgb, 255:red, 0; green, 0; blue, 0 }  ][line width=0.75]      (0, 0) circle [x radius= 3.35, y radius= 3.35]   ;
---------
\draw [shift={(206.33,59.13)}, rotate = 135] [color={rgb, 255:red, 0; green, 0; blue, 0 }  ][line width=0.75]      (0, 0) circle [x radius= 3.35, y radius= 3.35]   ;
Straight Lines [id:da7339974626230119] 
\draw    (175.26,61.47) -- (174.41,88.78) ;
\draw [shift={(174.33,91.13)}, rotate = 91.79] [color={rgb, 255:red, 0; green, 0; blue, 0 }  ][line width=0.75]      (0, 0) circle [x radius= 3.35, y radius= 3.35]   ;
\draw [shift={(175.33,59.13)}, rotate = 91.79] [color={rgb, 255:red, 0; green, 0; blue, 0 }  ][line width=0.75]      (0, 0) circle [x radius= 3.35, y radius= 3.35]   ;
\draw  [dash pattern={on 4.5pt off 4.5pt}]  (87.08,144.88) .. controls (127.08,114.88) and (106.08,169.88) .. (146.08,139.88) ;
\draw [line width=0.75] [line join = round][line cap = round] [dash pattern={on 4.5pt off 4.5pt}]  (25,41.79) .. controls (56.27,38) and (68.68,75.7) .. (97.17,75.13) .. controls (125.65,74.55) and (120.32,44.69) .. (143.33,33.68) .. controls (166.35,22.67) and (193,36.01) .. (207.67,43.46) ;
\draw [color={rgb, 255:red, 74; green, 144; blue, 226 }  ,draw opacity=1 ][line width=0.75] [line join = round][line cap = round] [dash pattern={on 4.5pt off 4.5pt}]  (30.33,81.63) .. controls (42.33,84.38) and (53.41,70.71) .. (65.75,71.04) .. controls (76.3,71.33) and (84.86,80.7) .. (95.25,82.54) .. controls (111.03,85.34) and (106.58,83.63) .. (122.5,81.29) .. controls (129.83,74.38) and (130.5,73.29) .. (131.75,68.54) .. controls (132,57.04) and (136.25,51.04) .. (143,42.29) .. controls (157.92,37.65) and (161.58,59.48) .. (167.75,66.29) .. controls (179.14,78.87) and (197.25,76.93) .. (212.25,76.29) ;
\draw  [dash pattern={on 4.5pt off 4.5pt}]  (37.83,90.04) .. controls (77.83,60.04) and (149.58,129.54) .. (189.58,99.54) ;
\draw [color={purple}  ,draw opacity=1 ] [dash pattern={on 4.5pt off 4.5pt}]  (72.5,116.21) .. controls (114.08,87.71) and (121.42,124.21) .. (163.08,113.46) ;
\draw    (260.33,98.13) -- (205,98) ;
\draw [shift={(203,98)}, rotate = 0.12] [color={rgb, 255:red, 0; green, 0; blue, 0 }  ][line width=0.75]    (10.93,-3.29) .. controls (6.95,-1.4) and (3.31,-0.3) .. (0,0) .. controls (3.31,0.3) and (6.95,1.4) .. (10.93,3.29)   ;
\draw    (320,131.79) -- (346.67,158.46) ;
\draw [shift={(348.33,160.13)}, rotate = 45] [color={rgb, 255:red, 0; green, 0; blue, 0 }  ][line width=0.75]      (0, 0) circle [x radius= 3.35, y radius= 3.35]   ;
\draw [shift={(318.33,130.13)}, rotate = 45] [color={rgb, 255:red, 0; green, 0; blue, 0 }  ][line width=0.75]      (0, 0) circle [x radius= 3.35, y radius= 3.35]   ;
\draw    (377.64,131.76) -- (348.33,160.13) ;
\draw [shift={(348.33,160.13)}, rotate = 135.94] [color={rgb, 255:red, 0; green, 0; blue, 0 }  ][fill={rgb, 255:red, 0; green, 0; blue, 0 }  ][line width=0.75]      (0, 0) circle [x radius= 3.35, y radius= 3.35]   ;
\draw [shift={(379.33,130.13)}, rotate = 135.94] [color={rgb, 255:red, 0; green, 0; blue, 0 }  ][line width=0.75]      (0, 0) circle [x radius= 3.35, y radius= 3.35]   ;
\draw    (289.97,100.81) -- (316.7,128.44) ;
\draw [shift={(318.33,130.13)}, rotate = 45.94] [color={rgb, 255:red, 0; green, 0; blue, 0 }  ][line width=0.75]      (0, 0) circle [x radius= 3.35, y radius= 3.35]   ;
\draw [shift={(288.33,99.13)}, rotate = 45.94] [color={rgb, 255:red, 0; green, 0; blue, 0 }  ][line width=0.75]      (0, 0) circle [x radius= 3.35, y radius= 3.35]   ;
\draw    (290.02,97.49) -- (316.64,71.76) ;
\draw [shift={(318.33,70.13)}, rotate = 315.97] [color={rgb, 255:red, 0; green, 0; blue, 0 }  ][line width=0.75]      (0, 0) circle [x radius= 3.35, y radius= 3.35]   ;
\draw [shift={(288.33,99.13)}, rotate = 315.97] [color={rgb, 255:red, 0; green, 0; blue, 0 }  ][line width=0.75]      (0, 0) circle [x radius= 3.35, y radius= 3.35]   ;
\draw    (286.64,97.49) -- (260.02,71.76) ;
\draw [shift={(258.33,70.13)}, rotate = 224.03] [color={rgb, 255:red, 0; green, 0; blue, 0 }  ][line width=0.75]      (0, 0) circle [x radius= 3.35, y radius= 3.35]   ;
\draw [shift={(288.33,99.13)}, rotate = 224.03] [color={rgb, 255:red, 0; green, 0; blue, 0 }  ][line width=0.75]      (0, 0) circle [x radius= 3.35, y radius= 3.35]   ;
\draw    (380.97,128.44) -- (407.7,100.81) ;
\draw [shift={(409.33,99.13)}, rotate = 314.06] [color={rgb, 255:red, 0; green, 0; blue, 0 }  ][line width=0.75]      (0, 0) circle [x radius= 3.35, y radius= 3.35]   ;
\draw [shift={(379.33,130.13)}, rotate = 314.06] [color={rgb, 255:red, 0; green, 0; blue, 0 }  ][line width=0.75]      (0, 0) circle [x radius= 3.35, y radius= 3.35]   ;
\draw    (381.91,68.87) -- (407.76,97.38) ;
\draw [shift={(409.33,99.13)}, rotate = 47.82] [color={rgb, 255:red, 0; green, 0; blue, 0 }  ][line width=0.75]      (0, 0) circle [x radius= 3.35, y radius= 3.35]   ;
\draw [shift={(380.33,67.13)}, rotate = 47.82] [color={rgb, 255:red, 0; green, 0; blue, 0 }  ][line width=0.75]      (0, 0) circle [x radius= 3.35, y radius= 3.35]   ;
\draw    (439.67,68.79) -- (411,97.46) ;
\draw [shift={(409.33,99.13)}, rotate = 135] [color={rgb, 255:red, 0; green, 0; blue, 0 }  ][line width=0.75]      (0, 0) circle [x radius= 3.35, y radius= 3.35]   ;
\draw [shift={(441.33,67.13)}, rotate = 135] [color={rgb, 255:red, 0; green, 0; blue, 0 }  ][line width=0.75]      (0, 0) circle [x radius= 3.35, y radius= 3.35]   ;
\draw    (410.26,69.47) -- (409.41,96.78) ;
\draw [shift={(409.33,99.13)}, rotate = 91.79] [color={rgb, 255:red, 0; green, 0; blue, 0 }  ][line width=0.75]      (0, 0) circle [x radius= 3.35, y radius= 3.35]   ;
\draw [shift={(410.33,67.13)}, rotate = 91.79] [color={rgb, 255:red, 0; green, 0; blue, 0 }  ][line width=0.75]      (0, 0) circle [x radius= 3.35, y radius= 3.35]   ;
\draw  [dash pattern={on 4.5pt off 4.5pt}]  (322.08,152.88) .. controls (362.08,122.88) and (341.08,177.88) .. (381.08,147.88) ;
\draw [line width=0.75] [line join = round][line cap = round] [dash pattern={on 4.5pt off 4.5pt}]  (260,49.79) .. controls (291.27,46) and (303.68,83.7) .. (332.17,83.13) .. controls (360.65,82.55) and (370.33,61.14) .. (385.33,52.14) .. controls (400.33,43.14) and (443,51.78) .. (442.67,51.46) ;
\draw  [dash pattern={on 4.5pt off 4.5pt}]  (272.83,98.04) .. controls (312.83,68.04) and (384.58,137.54) .. (424.58,107.54) ;
\draw    (489.33,24.14) ;
\draw [shift={(489.33,24.14)}, rotate = 0] [color={rgb, 255:red, 0; green, 0; blue, 0 }  ][fill={rgb, 255:red, 0; green, 0; blue, 0 }  ][line width=0.75]      (0, 0) circle [x radius= 3.35, y radius= 3.35]   ;
\draw    (478.33,28.14) -- (345.3,52.77) ;
\draw [shift={(343.33,53.14)}, rotate = 349.51] [color={rgb, 255:red, 0; green, 0; blue, 0 }  ][line width=0.75]    (10.93,-3.29) .. controls (6.95,-1.4) and (3.31,-0.3) .. (0,0) .. controls (3.31,0.3) and (6.95,1.4) .. (10.93,3.29)   ;
\draw    (484.33,172.14) ;
\draw [shift={(484.33,172.14)}, rotate = 0] [color={rgb, 255:red, 0; green, 0; blue, 0 }  ][fill={rgb, 255:red, 0; green, 0; blue, 0 }  ][line width=0.75]      (0, 0) circle [x radius= 3.35, y radius= 3.35]   ;
\draw    (476.33,170.14) -- (374.33,163.27) ;
\draw [shift={(372.33,163.14)}, rotate = 3.85] [color={rgb, 255:red, 0; green, 0; blue, 0 }  ][line width=0.75]    (10.93,-3.29) .. controls (6.95,-1.4) and (3.31,-0.3) .. (0,0) .. controls (3.31,0.3) and (6.95,1.4) .. (10.93,3.29)   ;
\draw    (501.33,74.14) ;
\draw [shift={(501.33,74.14)}, rotate = 0] [color={rgb, 255:red, 0; green, 0; blue, 0 }  ][line width=0.75]      (0, 0) circle [x radius= 3.35, y radius= 3.35]   ;
\draw [shift={(501.33,74.14)}, rotate = 0] [color={rgb, 255:red, 0; green, 0; blue, 0 }  ][fill={rgb, 255:red, 0; green, 0; blue, 0 }  ][line width=0.75]      (0, 0) circle [x radius= 3.35, y radius= 3.35]   ;
\draw    (535.91,65.87) -- (563.33,96.13) ;
\draw [shift={(563.33,96.13)}, rotate = 47.82] [color={rgb, 255:red, 0; green, 0; blue, 0 }  ][fill={rgb, 255:red, 0; green, 0; blue, 0 }  ][line width=0.75]      (0, 0) circle [x radius= 3.35, y radius= 3.35]   ;
\draw [shift={(534.33,64.13)}, rotate = 47.82] [color={rgb, 255:red, 0; green, 0; blue, 0 }  ][line width=0.75]      (0, 0) circle [x radius= 3.35, y radius= 3.35]   ;
\draw    (593.67,65.79) -- (565,94.46) ;
\draw [shift={(563.33,96.13)}, rotate = 135] [color={rgb, 255:red, 0; green, 0; blue, 0 }  ][line width=0.75]      (0, 0) circle [x radius= 3.35, y radius= 3.35]   ;
\draw [shift={(595.33,64.13)}, rotate = 135] [color={rgb, 255:red, 0; green, 0; blue, 0 }  ][line width=0.75]      (0, 0) circle [x radius= 3.35, y radius= 3.35]   ;
\draw    (564.26,66.47) -- (563.41,93.78) ;
\draw [shift={(563.33,96.13)}, rotate = 91.79] [color={rgb, 255:red, 0; green, 0; blue, 0 }  ][line width=0.75]      (0, 0) circle [x radius= 3.35, y radius= 3.35]   ;
\draw [shift={(564.33,64.13)}, rotate = 91.79] [color={rgb, 255:red, 0; green, 0; blue, 0 }  ][line width=0.75]      (0, 0) circle [x radius= 3.35, y radius= 3.35]   ;
\draw [color={rgb, 255:red, 74; green, 144; blue, 226 }  ,draw opacity=1 ][line width=0.75] [line join = round][line cap = round] [dash pattern={on 4.5pt off 4.5pt}]  (484.25,87.54) .. controls (494.8,87.83) and (473.86,85.7) .. (484.25,87.54) .. controls (500.03,90.34) and (495.58,88.63) .. (511.5,86.29) .. controls (518.83,79.38) and (519.5,78.29) .. (520.75,73.54) .. controls (521,62.04) and (525.25,56.04) .. (532,47.29) .. controls (546.92,42.65) and (550.58,64.48) .. (556.75,71.29) .. controls (568.14,83.87) and (586.25,81.93) .. (601.25,81.29) ;
\draw    (489.33,80.14) -- (444.33,82.05) ;
\draw [shift={(442.33,82.14)}, rotate = 357.56] [color={rgb, 255:red, 0; green, 0; blue, 0 }  ][line width=0.75]    (10.93,-3.29) .. controls (6.95,-1.4) and (3.31,-0.3) .. (0,0) .. controls (3.31,0.3) and (6.95,1.4) .. (10.93,3.29)   ;
\draw    (540.33,145.14) ;
\draw [shift={(540.33,145.14)}, rotate = 0] [color={rgb, 255:red, 0; green, 0; blue, 0 }  ][fill={rgb, 255:red, 0; green, 0; blue, 0 }  ][line width=0.75]      (0, 0) circle [x radius= 3.35, y radius= 3.35]   ;
\draw    (510.33,146.14) -- (489.69,117.05) ;
\draw [shift={(488.33,115.14)}, rotate = 234.64] [color={rgb, 255:red, 0; green, 0; blue, 0 }  ][line width=0.75]      (0, 0) circle [x radius= 3.35, y radius= 3.35]   ;
\draw [shift={(510.33,146.14)}, rotate = 234.64] [color={rgb, 255:red, 0; green, 0; blue, 0 }  ][fill={rgb, 255:red, 0; green, 0; blue, 0 }  ][line width=0.75]      (0, 0) circle [x radius= 3.35, y radius= 3.35]   ;
\draw [color={purple}  ,draw opacity=1 ] [dash pattern={on 4.5pt off 4.5pt}]  (474.5,136.21) .. controls (516.08,107.71) and (516.67,145.89) .. (558.33,135.14) ;
\draw    (468.33,134.14) -- (392.33,132.19) ;
\draw [shift={(390.33,132.14)}, rotate = 1.47] [color={rgb, 255:red, 0; green, 0; blue, 0 }  ][line width=0.75]    (10.93,-3.29) .. controls (6.95,-1.4) and (3.31,-0.3) .. (0,0) .. controls (3.31,0.3) and (6.95,1.4) .. (10.93,3.29)   ;
\end{tikzpicture}
\caption{Composition in the operad associated to a tree.}
		\label{treeoperad}
\end{figure}
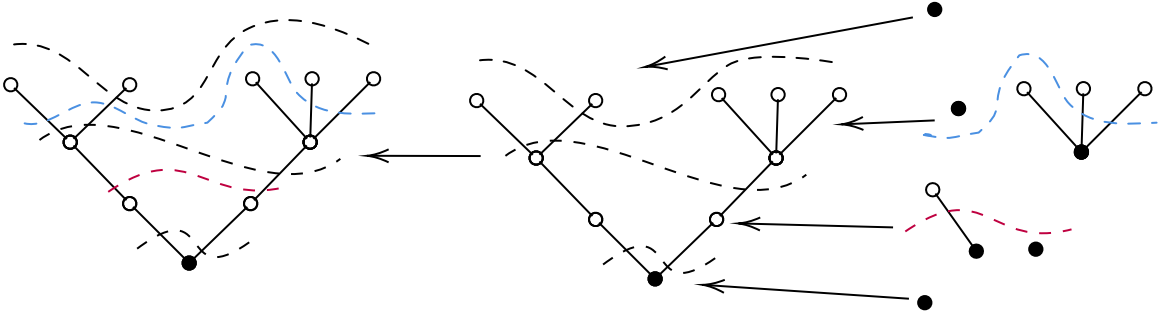
\end{example}
	 
\begin{remark}
As previously mentioned, there are three different choices of 2-Segal set associated to a tree $T$, according to whether $T$ is considered labelled, planar or neither.  Only in the unlabelled planar situation do we obtain sets $\mathcal{Q}_{X^T}(c_1, \dots, c_n \mid c_0)$ of $n$-ary operations of the coloured operad that are neither empty nor singletons; see Figure \ref{2ary}.
    \begin{figure}
		\centering
    \tikzset{every picture/.style={line width=0.75pt}} 
\begin{tikzpicture}[x=0.75pt,y=0.75pt,yscale=-1,xscale=1]

\draw    (105.33,35.33) ;
\draw [shift={(105.33,35.33)}, rotate = 0] [color={rgb, 255:red, 0; green, 0; blue, 0 }  ][fill={rgb, 255:red, 0; green, 0; blue, 0 }  ][line width=0.75]      (0, 0) circle [x radius= 3.35, y radius= 3.35]   ;
\draw    (204.91,20.87) -- (230.76,49.38) ;
\draw [shift={(232.33,51.13)}, rotate = 47.82] [color={rgb, 255:red, 0; green, 0; blue, 0 }  ][line width=0.75]      (0, 0) circle [x radius= 3.35, y radius= 3.35]   ;
\draw [shift={(203.33,19.13)}, rotate = 47.82] [color={rgb, 255:red, 0; green, 0; blue, 0 }  ][line width=0.75]      (0, 0) circle [x radius= 3.35, y radius= 3.35]   ;
\draw    (262.67,20.79) -- (232.33,51.13) ;
\draw [shift={(232.33,51.13)}, rotate = 135] [color={rgb, 255:red, 0; green, 0; blue, 0 }  ][fill={rgb, 255:red, 0; green, 0; blue, 0 }  ][line width=0.75]      (0, 0) circle [x radius= 3.35, y radius= 3.35]   ;
\draw [shift={(264.33,19.13)}, rotate = 135] [color={rgb, 255:red, 0; green, 0; blue, 0 }  ][line width=0.75]      (0, 0) circle [x radius= 3.35, y radius= 3.35]   ;
\draw    (155.33,22.35) -- (155.33,50) ;
\draw [shift={(155.33,50)}, rotate = 90] [color={rgb, 255:red, 0; green, 0; blue, 0 }  ][fill={rgb, 255:red, 0; green, 0; blue, 0 }  ][line width=0.75]      (0, 0) circle [x radius= 3.35, y radius= 3.35]   ;
\draw [shift={(155.33,20)}, rotate = 90] [color={rgb, 255:red, 0; green, 0; blue, 0 }  ][line width=0.75]      (0, 0) circle [x radius= 3.35, y radius= 3.35]   ;
\draw    (353.91,22.87) -- (379.76,51.38) ;
\draw [shift={(381.33,53.13)}, rotate = 47.82] [color={rgb, 255:red, 0; green, 0; blue, 0 }  ][line width=0.75]      (0, 0) circle [x radius= 3.35, y radius= 3.35]   ;
\draw [shift={(352.33,21.13)}, rotate = 47.82] [color={rgb, 255:red, 0; green, 0; blue, 0 }  ][line width=0.75]      (0, 0) circle [x radius= 3.35, y radius= 3.35]   ;
\draw    (411.67,22.79) -- (381.33,53.13) ;
\draw [shift={(381.33,53.13)}, rotate = 135] [color={rgb, 255:red, 0; green, 0; blue, 0 }  ][fill={rgb, 255:red, 0; green, 0; blue, 0 }  ][line width=0.75]      (0, 0) circle [x radius= 3.35, y radius= 3.35]   ;
\draw [shift={(413.33,21.13)}, rotate = 135] [color={rgb, 255:red, 0; green, 0; blue, 0 }  ][line width=0.75]      (0, 0) circle [x radius= 3.35, y radius= 3.35]   ;
\draw    (441.91,21.87) -- (467.76,50.38) ;
\draw [shift={(469.33,52.13)}, rotate = 47.82] [color={rgb, 255:red, 0; green, 0; blue, 0 }  ][line width=0.75]      (0, 0) circle [x radius= 3.35, y radius= 3.35]   ;
\draw [shift={(440.33,20.13)}, rotate = 47.82] [color={rgb, 255:red, 0; green, 0; blue, 0 }  ][line width=0.75]      (0, 0) circle [x radius= 3.35, y radius= 3.35]   ;
\draw    (499.67,21.79) -- (469.33,52.13) ;
\draw [shift={(469.33,52.13)}, rotate = 135] [color={rgb, 255:red, 0; green, 0; blue, 0 }  ][fill={rgb, 255:red, 0; green, 0; blue, 0 }  ][line width=0.75]      (0, 0) circle [x radius= 3.35, y radius= 3.35]   ;
\draw [shift={(501.33,20.13)}, rotate = 135] [color={rgb, 255:red, 0; green, 0; blue, 0 }  ][line width=0.75]      (0, 0) circle [x radius= 3.35, y radius= 3.35]   ;
\draw [line width=0.75] [line join = round][line cap = round] [dash pattern={on 4.5pt off 4.5pt}]  (352.33,42.47) .. controls (352.33,27.59) and (375.33,35.92) .. (375.33,16.47) ;
\draw [line width=0.75] [line join = round][line cap = round] [dash pattern={on 4.5pt off 4.5pt}]  (473.75,27.8) .. controls (476.42,26.89) and (477.75,26.3) .. (481.5,26.55) .. controls (488.13,28.89) and (492.45,35.04) .. (499.25,36.8) .. controls (504.28,38.11) and (504.67,36.64) .. (509.75,35.55) ;
\draw  [draw opacity=0] (273.43,54.56) .. controls (277,49.26) and (279.17,42.3) .. (279.17,34.67) .. controls (279.17,27.53) and (277.27,20.97) .. (274.1,15.82) -- (256.33,34.67) -- cycle ; \draw   (273.43,54.56) .. controls (277,49.26) and (279.17,42.3) .. (279.17,34.67) .. controls (279.17,27.53) and (277.27,20.97) .. (274.1,15.82) ;  
\draw  [draw opacity=0] (96.26,54.8) .. controls (91.44,49.48) and (88.5,42.41) .. (88.5,34.67) .. controls (88.5,27.27) and (91.17,20.51) .. (95.61,15.28) -- (118.5,34.67) -- cycle ; \draw   (96.26,54.8) .. controls (91.44,49.48) and (88.5,42.41) .. (88.5,34.67) .. controls (88.5,27.27) and (91.17,20.51) .. (95.61,15.28) ;  
\draw    (299.85,33.67) -- (313.68,33.83)(299.82,36.67) -- (313.65,36.83) ;
\draw   (342.67,16) .. controls (338,16.06) and (335.7,18.42) .. (335.76,23.09) -- (335.79,25.71) .. controls (335.88,32.38) and (333.59,35.74) .. (328.92,35.8) .. controls (333.59,35.74) and (335.96,39.04) .. (336.04,45.71)(336.01,42.71) -- (336.08,48.59) .. controls (336.14,53.26) and (338.5,55.56) .. (343.17,55.5) ;
\draw   (519.67,54.5) .. controls (524.34,54.56) and (526.7,52.26) .. (526.76,47.59) -- (526.79,44.69) .. controls (526.88,38.02) and (529.25,34.72) .. (533.92,34.78) .. controls (529.25,34.72) and (526.96,31.36) .. (527.05,24.69)(527.01,27.69) -- (527.08,22.09) .. controls (527.14,17.42) and (524.84,15.06) .. (520.17,15) ;
\draw    (188.33,23.33) -- (188.33,47.33) ;

\draw (126.92,37.92) node [anchor=north west][inner sep=0.75pt]   [align=left] {\textbf{,}};
\draw (424,41) node [anchor=north west][inner sep=0.75pt]   [align=left] {\textbf{,}};
\end{tikzpicture}
\caption{A non-empty, non-singleton set of binary operations in the coloured operad associated to a planar unlabelled tree.}
		\label{2ary}
	\end{figure}
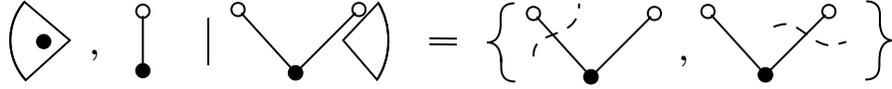
\end{remark}
	
\begin{example}
Let $G$ be a graph, and let $X^G$ be its associated 2-Segal set.  Following \cite[Example 2.3]{2s}, the invertible (co)operad $\mathcal{Q}_{X^{G}}$ associated to $G$ is defined as follows.  The colour set $\mathfrak{C} = X^G_1$, the set of all subgraphs of $G$,
\[ \mathcal{Q}_{X^G}(c_1,\dots,c_n \mid c_0) = \lbrace (H;S_1,\dots,S_n) \in X^G_n  \mid S_1 = c_1, \dots, S_n = c_n, H = c_0 \rbrace, \]
and the composition map $\nu_{k_1,\dots,k_n}$ is given by 
\[ \left((H; S_1, \dots, S_n), (S_1; S^1_1, \dots, S^1_{k_1}), \dots, (S_n; S^n_1,\dots, S^n_{k_n}) \right)\mapsto (H; S^1_1,\dots, S^1_{k_1}, \dots, S^n_1,\dots, S^n_{k_n}). \]
The composition map is illustrated in Figure \ref{graphoperad}.
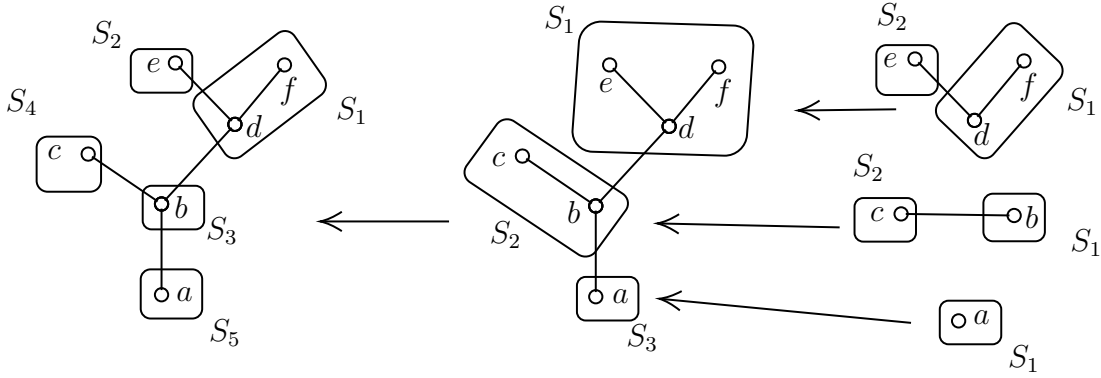
\begin{figure}
    \centering

\tikzset{every picture/.style={line width=0.75pt}} 

\begin{tikzpicture}[x=0.75pt,y=0.75pt,yscale=-1,xscale=1]

\draw    (100,121.35) -- (100,162.46) ;
\draw [shift={(100,164.81)}, rotate = 90] [color={rgb, 255:red, 0; green, 0; blue, 0 }  ][line width=0.75]      (0, 0) circle [x radius= 3.35, y radius= 3.35]   ;
\draw [shift={(100,119)}, rotate = 90] [color={rgb, 255:red, 0; green, 0; blue, 0 }  ][line width=0.75]      (0, 0) circle [x radius= 3.35, y radius= 3.35]   ;
\draw    (64.94,95.13) -- (98.06,117.68) ;
\draw [shift={(100,119)}, rotate = 34.24] [color={rgb, 255:red, 0; green, 0; blue, 0 }  ][line width=0.75]      (0, 0) circle [x radius= 3.35, y radius= 3.35]   ;
\draw [shift={(63,93.81)}, rotate = 34.24] [color={rgb, 255:red, 0; green, 0; blue, 0 }  ][line width=0.75]      (0, 0) circle [x radius= 3.35, y radius= 3.35]   ;
\draw    (135.41,80.54) -- (101.59,117.27) ;
\draw [shift={(100,119)}, rotate = 132.64] [color={rgb, 255:red, 0; green, 0; blue, 0 }  ][line width=0.75]      (0, 0) circle [x radius= 3.35, y radius= 3.35]   ;
\draw [shift={(137,78.81)}, rotate = 132.64] [color={rgb, 255:red, 0; green, 0; blue, 0 }  ][line width=0.75]      (0, 0) circle [x radius= 3.35, y radius= 3.35]   ;
\draw    (108.63,49.5) -- (135.37,77.12) ;
\draw [shift={(137,78.81)}, rotate = 45.94] [color={rgb, 255:red, 0; green, 0; blue, 0 }  ][line width=0.75]      (0, 0) circle [x radius= 3.35, y radius= 3.35]   ;
\draw [shift={(107,47.81)}, rotate = 45.94] [color={rgb, 255:red, 0; green, 0; blue, 0 }  ][line width=0.75]      (0, 0) circle [x radius= 3.35, y radius= 3.35]   ;
\draw    (160.5,50.62) -- (138.5,77.01) ;
\draw [shift={(137,78.81)}, rotate = 129.81] [color={rgb, 255:red, 0; green, 0; blue, 0 }  ][line width=0.75]      (0, 0) circle [x radius= 3.35, y radius= 3.35]   ;
\draw [shift={(162,48.81)}, rotate = 129.81] [color={rgb, 255:red, 0; green, 0; blue, 0 }  ][line width=0.75]      (0, 0) circle [x radius= 3.35, y radius= 3.35]   ;
\draw    (245,127.81) -- (182,127.81) ;
\draw [shift={(180,127.81)}, rotate = 360] [color={rgb, 255:red, 0; green, 0; blue, 0 }  ][line width=0.75]    (10.93,-4.9) .. controls (6.95,-2.3) and (3.31,-0.67) .. (0,0) .. controls (3.31,0.67) and (6.95,2.3) .. (10.93,4.9)   ;
\draw   (84.5,58.41) .. controls (84.5,60.84) and (86.47,62.81) .. (88.9,62.81) -- (111.1,62.81) .. controls (113.53,62.81) and (115.5,60.84) .. (115.5,58.41) -- (115.5,45.21) .. controls (115.5,42.78) and (113.53,40.81) .. (111.1,40.81) -- (88.9,40.81) .. controls (86.47,40.81) and (84.5,42.78) .. (84.5,45.21) -- cycle ;
\draw   (449.5,133.6) .. controls (449.5,136.03) and (451.47,138) .. (453.9,138) -- (476.1,138) .. controls (478.53,138) and (480.5,136.03) .. (480.5,133.6) -- (480.5,120.4) .. controls (480.5,117.97) and (478.53,116) .. (476.1,116) -- (453.9,116) .. controls (451.47,116) and (449.5,117.97) .. (449.5,120.4) -- cycle ;
\draw   (90.5,127.41) .. controls (90.5,129.84) and (92.47,131.81) .. (94.9,131.81) -- (117.1,131.81) .. controls (119.53,131.81) and (121.5,129.84) .. (121.5,127.41) -- (121.5,114.21) .. controls (121.5,111.78) and (119.53,109.81) .. (117.1,109.81) -- (94.9,109.81) .. controls (92.47,109.81) and (90.5,111.78) .. (90.5,114.21) -- cycle ;
\draw   (89.5,171.85) .. controls (89.5,174.59) and (91.72,176.81) .. (94.46,176.81) -- (115.54,176.81) .. controls (118.28,176.81) and (120.5,174.59) .. (120.5,171.85) -- (120.5,156.96) .. controls (120.5,154.22) and (118.28,152) .. (115.54,152) -- (94.46,152) .. controls (91.72,152) and (89.5,154.22) .. (89.5,156.96) -- cycle ;
\draw   (131.91,93.39) .. controls (133.87,96.18) and (137.66,96.77) .. (140.37,94.69) -- (180.42,64.01) .. controls (183.13,61.93) and (183.74,57.98) .. (181.77,55.18) -- (167.09,34.24) .. controls (165.13,31.44) and (161.34,30.86) .. (158.63,32.93) -- (118.58,63.62) .. controls (115.87,65.69) and (115.26,69.65) .. (117.23,72.45) -- cycle ;
\draw   (37,107.25) .. controls (37,110.32) and (39.49,112.81) .. (42.56,112.81) -- (63.94,112.81) .. controls (67.01,112.81) and (69.5,110.32) .. (69.5,107.25) -- (69.5,90.56) .. controls (69.5,87.49) and (67.01,85) .. (63.94,85) -- (42.56,85) .. controls (39.49,85) and (37,87.49) .. (37,90.56) -- cycle ;
\draw    (319,122.35) -- (319,163.46) ;
\draw [shift={(319,165.81)}, rotate = 90] [color={rgb, 255:red, 0; green, 0; blue, 0 }  ][line width=0.75]      (0, 0) circle [x radius= 3.35, y radius= 3.35]   ;
\draw [shift={(319,120)}, rotate = 90] [color={rgb, 255:red, 0; green, 0; blue, 0 }  ][line width=0.75]      (0, 0) circle [x radius= 3.35, y radius= 3.35]   ;
\draw    (283.94,96.13) -- (317.06,118.68) ;
\draw [shift={(319,120)}, rotate = 34.24] [color={rgb, 255:red, 0; green, 0; blue, 0 }  ][line width=0.75]      (0, 0) circle [x radius= 3.35, y radius= 3.35]   ;
\draw [shift={(282,94.81)}, rotate = 34.24] [color={rgb, 255:red, 0; green, 0; blue, 0 }  ][line width=0.75]      (0, 0) circle [x radius= 3.35, y radius= 3.35]   ;
\draw    (354.41,81.54) -- (320.59,118.27) ;
\draw [shift={(319,120)}, rotate = 132.64] [color={rgb, 255:red, 0; green, 0; blue, 0 }  ][line width=0.75]      (0, 0) circle [x radius= 3.35, y radius= 3.35]   ;
\draw [shift={(356,79.81)}, rotate = 132.64] [color={rgb, 255:red, 0; green, 0; blue, 0 }  ][line width=0.75]      (0, 0) circle [x radius= 3.35, y radius= 3.35]   ;
\draw    (327.63,50.5) -- (354.37,78.12) ;
\draw [shift={(356,79.81)}, rotate = 45.94] [color={rgb, 255:red, 0; green, 0; blue, 0 }  ][line width=0.75]      (0, 0) circle [x radius= 3.35, y radius= 3.35]   ;
\draw [shift={(326,48.81)}, rotate = 45.94] [color={rgb, 255:red, 0; green, 0; blue, 0 }  ][line width=0.75]      (0, 0) circle [x radius= 3.35, y radius= 3.35]   ;
\draw    (379.5,51.62) -- (357.5,78.01) ;
\draw [shift={(356,79.81)}, rotate = 129.81] [color={rgb, 255:red, 0; green, 0; blue, 0 }  ][line width=0.75]      (0, 0) circle [x radius= 3.35, y radius= 3.35]   ;
\draw [shift={(381,49.81)}, rotate = 129.81] [color={rgb, 255:red, 0; green, 0; blue, 0 }  ][line width=0.75]      (0, 0) circle [x radius= 3.35, y radius= 3.35]   ;
\draw   (492.5,185.41) .. controls (492.5,187.84) and (494.47,189.81) .. (496.9,189.81) -- (519.1,189.81) .. controls (521.53,189.81) and (523.5,187.84) .. (523.5,185.41) -- (523.5,172.21) .. controls (523.5,169.78) and (521.53,167.81) .. (519.1,167.81) -- (496.9,167.81) .. controls (494.47,167.81) and (492.5,169.78) .. (492.5,172.21) -- cycle ;
\draw   (514.5,133.01) .. controls (514.5,135.66) and (516.65,137.81) .. (519.3,137.81) -- (540.7,137.81) .. controls (543.35,137.81) and (545.5,135.66) .. (545.5,133.01) -- (545.5,118.61) .. controls (545.5,115.96) and (543.35,113.81) .. (540.7,113.81) -- (519.3,113.81) .. controls (516.65,113.81) and (514.5,115.96) .. (514.5,118.61) -- cycle ;
\draw   (460.5,58.96) .. controls (460.5,61.74) and (462.76,64) .. (465.54,64) -- (486.46,64) .. controls (489.24,64) and (491.5,61.74) .. (491.5,58.96) -- (491.5,43.85) .. controls (491.5,41.07) and (489.24,38.81) .. (486.46,38.81) -- (465.54,38.81) .. controls (462.76,38.81) and (460.5,41.07) .. (460.5,43.85) -- cycle ;
\draw   (309.5,173.41) .. controls (309.5,175.84) and (311.47,177.81) .. (313.9,177.81) -- (336.1,177.81) .. controls (338.53,177.81) and (340.5,175.84) .. (340.5,173.41) -- (340.5,160.21) .. controls (340.5,157.78) and (338.53,155.81) .. (336.1,155.81) -- (313.9,155.81) .. controls (311.47,155.81) and (309.5,157.78) .. (309.5,160.21) -- cycle ;
\draw    (481.63,47.5) -- (508.37,75.12) ;
\draw [shift={(510,76.81)}, rotate = 45.94] [color={rgb, 255:red, 0; green, 0; blue, 0 }  ][line width=0.75]      (0, 0) circle [x radius= 3.35, y radius= 3.35]   ;
\draw [shift={(480,45.81)}, rotate = 45.94] [color={rgb, 255:red, 0; green, 0; blue, 0 }  ][line width=0.75]      (0, 0) circle [x radius= 3.35, y radius= 3.35]   ;
\draw    (533.5,48.62) -- (511.5,75.01) ;
\draw [shift={(510,76.81)}, rotate = 129.81] [color={rgb, 255:red, 0; green, 0; blue, 0 }  ][line width=0.75]      (0, 0) circle [x radius= 3.35, y radius= 3.35]   ;
\draw [shift={(535,46.81)}, rotate = 129.81] [color={rgb, 255:red, 0; green, 0; blue, 0 }  ][line width=0.75]      (0, 0) circle [x radius= 3.35, y radius= 3.35]   ;
\draw   (511.03,94.25) .. controls (513.5,96.61) and (517.33,96.45) .. (519.58,93.88) -- (552.86,55.96) .. controls (555.11,53.39) and (554.94,49.4) .. (552.46,47.04) -- (533.97,29.37) .. controls (531.5,27.01) and (527.67,27.18) .. (525.42,29.74) -- (492.14,67.66) .. controls (489.89,70.23) and (490.06,74.23) .. (492.54,76.59) -- cycle ;
\draw   (307.3,81.61) .. controls (306.87,87.52) and (311.33,92.46) .. (317.25,92.63) -- (383.77,94.61) .. controls (389.69,94.78) and (394.84,90.13) .. (395.27,84.22) -- (398.45,39.96) .. controls (398.88,34.05) and (394.43,29.11) .. (388.5,28.93) -- (321.98,26.96) .. controls (316.06,26.78) and (310.91,31.44) .. (310.49,37.35) -- cycle ;
\draw   (253.96,99.53) .. controls (251.94,102.29) and (252.61,106.06) .. (255.46,107.96) -- (310.21,144.47) .. controls (313.05,146.37) and (316.99,145.67) .. (319,142.91) -- (334.08,122.25) .. controls (336.09,119.49) and (335.42,115.71) .. (332.58,113.82) -- (277.83,77.3) .. controls (274.99,75.41) and (271.05,76.11) .. (269.04,78.87) -- cycle ;
\draw    (502,178) ;
\draw [shift={(502,178)}, rotate = 0] [color={rgb, 255:red, 0; green, 0; blue, 0 }  ][line width=0.75]      (0, 0) circle [x radius= 3.35, y radius= 3.35]   ;
\draw    (475.35,124.03) -- (527.65,124.78) ;
\draw [shift={(530,124.81)}, rotate = 0.82] [color={rgb, 255:red, 0; green, 0; blue, 0 }  ][line width=0.75]      (0, 0) circle [x radius= 3.35, y radius= 3.35]   ;
\draw [shift={(473,124)}, rotate = 0.82] [color={rgb, 255:red, 0; green, 0; blue, 0 }  ][line width=0.75]      (0, 0) circle [x radius= 3.35, y radius= 3.35]   ;
\draw    (470.5,71.21) -- (423,71.79) ;
\draw [shift={(421,71.81)}, rotate = 359.31] [color={rgb, 255:red, 0; green, 0; blue, 0 }  ][line width=0.75]    (10.93,-4.9) .. controls (6.95,-2.3) and (3.31,-0.67) .. (0,0) .. controls (3.31,0.67) and (6.95,2.3) .. (10.93,4.9)   ;
\draw    (478,179) -- (352.99,167) ;
\draw [shift={(351,166.81)}, rotate = 5.48] [color={rgb, 255:red, 0; green, 0; blue, 0 }  ][line width=0.75]    (10.93,-4.9) .. controls (6.95,-2.3) and (3.31,-0.67) .. (0,0) .. controls (3.31,0.67) and (6.95,2.3) .. (10.93,4.9)   ;
\draw    (442,130.81) -- (352,128.86) ;
\draw [shift={(350,128.81)}, rotate = 1.25] [color={rgb, 255:red, 0; green, 0; blue, 0 }  ][line width=0.75]    (10.93,-4.9) .. controls (6.95,-2.3) and (3.31,-0.67) .. (0,0) .. controls (3.31,0.67) and (6.95,2.3) .. (10.93,4.9)   ;

\draw (106,159.4) node [anchor=north west][inner sep=0.75pt]    {$a$};
\draw (105,113.4) node [anchor=north west][inner sep=0.75pt]    {$b$};
\draw (41,87.96) node [anchor=north west][inner sep=0.75pt]    {$c$};
\draw (141,72.4) node [anchor=north west][inner sep=0.75pt]    {$d$};
\draw (90.9,44.21) node [anchor=north west][inner sep=0.75pt]    {$e$};
\draw (158,53.21) node [anchor=north west][inner sep=0.75pt]    {$f$};
\draw (326,160.4) node [anchor=north west][inner sep=0.75pt]    {$a$};
\draw (303.06,116.2) node [anchor=north west][inner sep=0.75pt]    {$b$};
\draw (265,91.4) node [anchor=north west][inner sep=0.75pt]    {$c$};
\draw (359.06,73.09) node [anchor=north west][inner sep=0.75pt]    {$d$};
\draw (319,54.21) node [anchor=north west][inner sep=0.75pt]    {$e$};
\draw (377,54.21) node [anchor=north west][inner sep=0.75pt]    {$f$};
\draw (508,76.4) node [anchor=north west][inner sep=0.75pt]    {$d$};
\draw (462.5,41.25) node [anchor=north west][inner sep=0.75pt]    {$e$};
\draw (531,51.21) node [anchor=north west][inner sep=0.75pt]    {$f$};
\draw (455.9,119.4) node [anchor=north west][inner sep=0.75pt]    {$c$};
\draw (534,118.4) node [anchor=north west][inner sep=0.75pt]    {$b$};
\draw (508,171.4) node [anchor=north west][inner sep=0.75pt]    {$a$};
\draw (122.5,175.81) node [anchor=north west][inner sep=0.75pt]    {$S_{5}$};
\draw (63,25.4) node [anchor=north west][inner sep=0.75pt]    {$S_{2}$};
\draw (121,124.4) node [anchor=north west][inner sep=0.75pt]    {$S_{3}$};
\draw (20,59.4) node [anchor=north west][inner sep=0.75pt]    {$S_{4}$};
\draw (525.5,188.81) node [anchor=north west][inner sep=0.75pt]    {$S_{1}$};
\draw (333,178.4) node [anchor=north west][inner sep=0.75pt]    {$S_{3}$};
\draw (186.5,63.81) node [anchor=north west][inner sep=0.75pt]    {$S_{1}$};
\draw (291.5,17.81) node [anchor=north west][inner sep=0.75pt]    {$S_{1}$};
\draw (554.86,59.36) node [anchor=north west][inner sep=0.75pt]    {$S_{1}$};
\draw (264,126.4) node [anchor=north west][inner sep=0.75pt]    {$S_{2}$};
\draw (459,16.4) node [anchor=north west][inner sep=0.75pt]    {$S_{2}$};
\draw (557,130.4) node [anchor=north west][inner sep=0.75pt]    {$S_{1}$};
\draw (447,92.4) node [anchor=north west][inner sep=0.75pt]    {$S_{2}$};

\end{tikzpicture}
    \caption{Composition in the operad associated to a graph.}
	\label{graphoperad}
\end{figure}
\end{example}

\end{document}